\title{%An Inexact Riemannian Proximal DC Algorithm for Nonsmooth Optimization on Manifolds:  Overall Complexity and Application to Sparse PCA
An Inexact Proximal Framework for Nonsmooth Riemannian Difference-of-Convex Optimization
% with Overall Iteration Complexity Guarantees
%\thanks{Submitted to the editors DATE.
%\funding{This work was funded by ???}}
}
\author{Bo Jiang\thanks{Ministry of Education Key Laboratory of NSLSCS, School of Mathematical Sciences, Nanjing Normal University, Nanjing 210023, China 
  (\email{jiangbo@njnu.eud.cn}, \email{caixingju@njnu.edu.cn}).}
\and  Meng Xu\thanks{State Key Laboratory of Scientific and Engineering Computing, Institute of Computational Mathematics and Scientific/Engineering Computing, Academy of Mathematics and Systems Science, Chinese Academy of Sciences, Beijing 100190, China 
  (\email{xumeng22@mails.ucas.ac.cn}).}
  \and Xingju Cai\footnotemark[1] 
\and  Ya-Feng Liu\thanks{Ministry of Education Key Laboratory of Mathematics and Information Networks, School of
Mathematical Sciences, Beijing University of Posts and Telecommunications,
Beijing 102206, China (\email{yafengliu@bupt.edu.cn}).}}
\crefname{hypothesis}{Hypothesis}{Hypotheses}
\crefname{fact}{Fact}{Facts}
\newsavebox\CBox
\DeclareMathOperator*{\argmin}{arg\,min}
\DeclareMathOperator{\grad}{grad}
\DeclareMathOperator{\prox}{prox}
\newcommand{\iprod}[2]{\left\langle #1, #2 \right \rangle}
\newcommand{\iprods}[2]{\langle #1, #2  \rangle}
\newcommand{\revgpt}[1]{{\color{black}{#1}}}
\newcommand{\tran}{\top}
\newcommand{\ba}{\begin{array}}
\newcommand{\ea}{\end{array}}
\newcommand{\bit}{\begin{itemize}}
\newcommand{\eit}{\end{itemize}}
\newcommand{\be}{\begin{equation}}
\newcommand{\ee}{\end{equation}}
\newcommand{\bea}{\begin{eqnarray}}
\newcommand{\eea}{\end{eqnarray}}
\newcommand{\st}{\mathrm{s.t.}}
\newcommand{\mtr}{\mathrm{tr}}
\newcommand{\proj}{\mathsf{Proj}}%{\mathsf{Proj}}
\newcommand{\stief}{\mathcal{S}^{n,r}}
\newcommand{\rgrad}{\mathrm{grad}\,}
\newcommand{\Rmn}[1]{\uppercase\expandafter{\romannumeral#1}}
\newcommand{\Ccal}{\mathcal{C}}
\newcommand{\Ecal}{\mathcal{E}}
\newcommand{\Mcal}{\mathcal{M}}
\newcommand{\Rcal}{\mathcal{R}}
\newcommand{\Retr}{\mathrm{Retr}}
\newcommand{\Scal}{\mathcal{S}}
\newcommand{\Rbb}{\mathbb{R}}
\newcommand{\Ftt}{\mathtt{F}}
\newcommand{\dist}{\mathrm{dist}}
\newcommand{\tangent}{\mathrm{T}}
\newcommand{\capp}{\upsilon}
\newcommand{\normmm}[1]{{\left\vert\kern-0.25ex\left\vert\kern-0.25ex\left\vert #1 
   \right\vert\kern-0.25ex\right\vert\kern-0.25ex\right\vert}}
\begin{document}
\maketitle
% REQUIRED
\begin{abstract}
Nonsmooth Riemannian optimization has attracted increasing attention, especially in problems with sparse structures. While existing formulations typically involve convex nonsmooth terms, incorporating nonsmooth difference-of-convex (DC) penalties can enhance recovery accuracy. In this paper, we study a class of nonsmooth Riemannian optimization problems whose objective is the sum of a smooth function and a nonsmooth DC term. We establish, for the first time in the manifold setting, the equivalence between such DC formulations (with suitably chosen nonsmooth DC terms) and their $\ell_0$-regularized or $\ell_0$-constrained counterparts. To solve these problems, we propose an inexact Riemannian proximal DC (iRPDC) algorithmic framework, which returns an $\epsilon$-Riemannian critical point within $\mathcal{O}(\epsilon^{-2})$ outer iterations.  Within this framework, we develop several practical algorithms based on different subproblem solvers.  Among them, one achieves an overall iteration complexity of $\mathcal{O}(\epsilon^{-3})$, which matches the best-known bound in the literature.  In contrast, existing algorithms either lack provable overall complexity or require $\mathcal{O}(\epsilon^{-3})$ iterations in both outer and overall complexity.   A notable feature of the iRPDC algorithmic framework  is a novel inexactness criterion that not only enables efficient subproblem solutions via first-order methods but also facilitates a linesearch procedure that adaptively captures the local curvature. Numerical results on sparse principal component analysis demonstrate  the modeling flexibility of the DC formulaton and the competitive performance of  the proposed algorithmic framework. 
\end{abstract}

% REQUIRED
\begin{keywords}
 difference-of-convex optimization, inexact framework, nonsmooth Riemannian optimization, sparse optimization, overall complexity
\end{keywords}

% REQUIRED
\begin{MSCcodes}
68Q25, 68R10, 68U05
\end{MSCcodes}

\section{Introduction}
In this paper, we study a class of nonsmooth Riemannian difference-of-convex (DC) optimization problems of the form
 \be \label{equ:prob:dc}
\min_{x \in \Mcal}\, \left\{F(x):= f(x) + h(x) - g(x)\right\}, 
\ee
where  $\Mcal$ is a Riemannian submanifold of a finite-dimensional Euclidean space $\Ecal$, which is equipped with the standard inner product $\iprod{\cdot}{\cdot}$ and the induced $\ell_2$-norm $\|\cdot\|$.  Problem \eqref{equ:prob:dc}, including its special case where \(g(\cdot) = 0\), captures a wide range of applications, particularly in signal processing and machine learning; see \cite{d2008optimal,chen2020proximal,chen2020alternating,xiao2021exact,huang2025riemannian} and references therein for more details. 
Throughout this paper, we assume that the functions $f$, $h$, and $g$ satisfy the following assumptions.  
\begin{assumption} \label{assumption:f:h:g}
\begin{enumerate}[leftmargin=*, itemindent=2em, labelsep=0.5em] 
\item [(i)] The  function $f: \Ecal \to \Rbb$ is  smooth, Lipschitz continuous with parameter $L_f^0 \geq 0$, and satisfies the descent inequality  with parameter $L_f\geq 0$, i.e.,  
\be \label{equ:f:descent:euclidean}
f(y) \leq f(x) + \langle \nabla f(x), y - x \rangle + \frac{L_f}{2} \|y - x \|^2, \quad \forall\, x,y \in \Ecal.
\ee
\item [(ii)] The functions \(h, g: \Ecal \to \Rbb\) are convex, possibly nonsmooth, and Lipschitz continuous with parameters ${L_h^0} \geq 0$ and ${L_g^0} \geq 0$, respectively.
The proximal mapping of $h$ and a subgradient of $g$ can be computed efficiently. 
\item [(iii)] The level set $\{x \in \Mcal \mid F(x) \leq \bar F\}$ is compact  for some  $\bar F \in \Rbb$. 
\end{enumerate}
\end{assumption}

\subsection{Motivating examples} \label{section:motivating:examples}
We present two motivating examples of problem \eqref{equ:prob:dc}, whose connections to their sparse counterparts will be discussed in Section \ref{section:DC:sparse}. 
\begin{example}\label{equ:prob:capped:l1}
A typical sparse optimization problem over a manifold involving the $\ell_0$-norm in the objective takes the form \cite{d2008optimal,journee2010generalized}:   
\be \label{prob:sphere:ksparse:l0:regularized}
\min_{x \in \Mcal}\, f(x) + \sigma \|x\|_0,
\ee  
where $\sigma > 0$ is a given parameter and, $\|x\|_0$, the so-called $\ell_0$-norm of $x$, denotes the number of nonzero elements in $x$.  In the Euclidean setting,  this nonconvex term is often approximated by the capped-$\ell_1$ penalty  \cite{peleg2008bilinear},  which is one of the tightest continuous DC relaxations of $\|x\|_0$; see \cite{le2015dc}  for details.  Extending this idea to the   Riemannian case, we consider the following capped-$\ell_1$ penalized model: 
\be \label{prob:sphere:l0:capped:l1}
\min_{x \in \Mcal}\, f(x) + \sigma  \Phi_{\capp}(x), 
\ee 
where $\Phi_{\capp}(x) = \sum_{i} \min\{\upsilon |x_{(i)}|, 1\}$ with a given $\upsilon > 0$, and $x_{(i)}$ is the $i$-th element of $x$. 
Problem \eqref{prob:sphere:l0:capped:l1} is an instance of problem \eqref{equ:prob:dc} with $h(x) =  \sigma \upsilon \|x\|_1$ and 
 $g(x) =  \sigma \sum_i \max\{\upsilon |x_{(i)}|-1, 0\}$, where $\|x\|_1$ is the $\ell_1$-norm of $x$. 
\end{example}
\begin{example}\label{lemma:exact:penatly:sphere}
In many applications, such as sparse principal component analysis (SPCA) \cite{d2004direct}, sparse Fisher's discriminant analysis \cite{cai2021note}, and clustering problems \cite{huang2025riemannian}, strict sparsity constraints are required.  This leads to the following formulation: 
\be  \label{prob:sphere:ksparse} 
\min_{x \in \Mcal}\, f(x) \quad \st \quad \|x\|_0 \leq k,
\ee 
where $k$ is a given positive integer.  Define the largest $k$-norm of $x$ as $\normmm{x}_k:=  |x_{[1]}| +  |x_{[2]}| + \cdots +  |x_{[k]}|$, where $|x_{[i]}|$ is the $i$-th largest element among $\{|x_{(i)}|\}$. 
Observing that $\|x\|_0 \leq k$ is equivalent to the DC constraint $\|x\|_1 - \normmm{x}_k = 0$, the work \cite{gotoh2018dc} reformulated problem \eqref{prob:sphere:ksparse}  by penalizing this constraint in the objective (in the Euclidean setting). Following this idea, we extend it to the manifold setting:
\be \label{prob:sphere:knorm} 
\min_{x \in \Mcal}\, f(x) + \gamma \left( \|x\|_1 -  \normmm{x}_k \right),
\ee
where $\gamma > 0$ is a sparsity penalty parameter. This problem again matches  problem \eqref{equ:prob:dc} with $h(x) = \gamma \|x\|_1$ and $g(x) = \gamma \normmm{x}_k$.  
\end{example}

\subsection{Related works} We briefly review existing works on DC programming and nonsmooth Riemannian optimization.
 \paragraph{DC programming in Euclidean settings} DC programming has been extensively studied since the 1980s; see \cite{le2018dc} and references therein.  A standard DC program corresponds to the formulation \eqref{equ:prob:dc} with $\Mcal$ taken as a closed convex set of $\Ecal$.  The classic approach is the so-called DC algorithm, which solves a sequence of convex subproblems by linearizing $g$ while keeping  $f$ and $h$ unchanged. In recent years, several efficient algorithms have been developed,  including the proximal DC algorithm \cite{gotoh2018dc,wen2018proximal} and its enhanced versions  \cite{banert2019general,lu2019nonmonotone,aragon2020boosted,liu2022inexact,phan2024difference}. These algorithms, however, face challenges when additional nonconvex constraints of the form $\Ccal:=\{x\in\Rbb^n \mid c_i(x) \leq 0, i = 1, 2, \ldots, m\}$ are involved, where each $c_i(\cdot)$ is a smooth DC function. Two main strategies have been explored to address such constraints. The first leverages the exact penalty framework \cite{le2012exact}, which relies on establishing error bounds for specific types of constraints. Yet, it remains unclear whether such error bounds hold when $\Mcal$ is a Riemannian submanifold. The second strategy linearizes the concave part of each constraint at the current iterate \cite{yu2021convergence,zhang2023retraction}.   However, the convergence of such methods typically depends on the Mangasarian-Fromovitz constraint qualification (MFCQ), which generally fails in  the Riemannian setting, even in the simple case of the sphere \footnote{In their setting, the MFCQ requires that, for any $x \in \Ccal$, there exists $d \in \Rbb^n$ such that $\nabla c_i(x)^\tran d < 0$ for all $i$ with $c_i(x) = 0$. This condition clearly fails on the  sphere   $\Mcal = \{x\in\Rbb^n\mid x^\tran x = 1\}$, which corresponds to $c_1(x) = x^\tran x - 1$ and $c_2(x) = -x^\tran x + 1$.}.

\paragraph{Nonsmooth Riemannian optimization} Recently, nonsmooth Riemannian optimization has attracted growing attention.  For problem \eqref{equ:prob:dc} with $g(\cdot) = 0$, a variety of algorithms have been developed starting from the seminal ManPG method \cite{chen2020proximal}; see for instance \cite{chen2020alternating,xiao2021exact,huangwei2022riemannian,beck2023dynamic,peng2023riemannian,zhou2023semismooth,deng2024oracle,li2024riemannian,liu2024penalty,si2024riemannian,xu2024riemannian,xu2025oracle}.  For a comprehensive overview of recent advances, we refer the reader to \cite{chen2024nonsmooth} and the references therein.
When $g(\cdot) \neq 0$, only a few algorithms have been developed for Hadamard manifolds, such as Riemannian proximal point algorithms \cite{souza2015proximal,almeida2020modified} and Riemannian DC algorithms \cite{bergmann2024difference}. However,  these methods are inapplicable to many commonly used manifolds, such as the sphere and Stiefel manifolds, which are not Hadamard. 
 More recently, the work \cite{li2024proximal} extended ManPG to fractional and DC-structured problems via a proximal-gradient-subgradient method, but the requirement of exact subproblem solutions precludes an overall complexity guarantee. Table~\ref{table:complexity} summarizes existing algorithms that provide complexity guarantees for achieving an $\epsilon$-Riemannian critical point  of problem \eqref{equ:prob:dc} (see Definition~\ref{def:epsilon:critical}).  {In particular, RALM \cite{deng2024oracle}, RADMM \cite{li2024riemannian},  RSG \cite{beck2023dynamic,peng2023riemannian}, and OADMM \cite{yuan2024admm} consider general nonsmooth terms of the form $h(\mathcal{A}(x))$, where $\mathcal{A}$ is a linear operator, while RADA \cite{xu2024riemannian}  and RALM \cite{xu2025oracle}  further extend this capability to smooth, possibly a nonlinear operator $\mathcal{A}$. In contrast, the algorithms developed in this paper address DC-structured problems with $\mathcal{A}$ as the identity map, and extending the framework to general operators will be investigated in future work.}

\begin{table}[!t]
 \footnotesize
\setlength{\extrarowheight}{1.2pt} % 设置额外的行高，调整虚线间距
\centering
\caption{Complexity for achieving an $\epsilon$-Riemannian critical point.}  \label{table:complexity} 
\begin{tabular}{@{\hspace{1mm}}lcccc@{\hspace{1mm}}}
\hline
Algorithm & $h(\cdot) - g(\cdot)$ is DC & \# $\rgrad f(\cdot)$  &  \# $\Retr_x(\cdot)$  &   \# $\prox_h(\cdot)$  \\ 
\hline 
ManPG \cite{chen2020proximal}          & \ding{55}  & $\mathcal{O}(\epsilon^{-2})$ &  $\mathcal{O}(\epsilon^{-2})$ & --- \\
IRPG \cite{huang2023inexact,huang2025riemannian}    & \ding{55}   & $\mathcal{O}(\epsilon^{-2})$& $\mathcal{O}(\epsilon^{-2})$ & --- \\
SPLG \cite{liu2024penalty}           & \ding{55}  & $\mathcal{O}(\epsilon^{-2})$& $\mathcal{O}(\epsilon^{-2})$ & --- \\
\hdashline[0.5pt/2pt]
RALM \cite{deng2024oracle,xu2025oracle}         & \ding{55}   & $\mathcal{O}(\epsilon^{-3})$& $\mathcal{O}(\epsilon^{-3})$ & $\mathcal{O}(\epsilon^{-3})$ \\ 
RADMM \cite{li2024riemannian}              & \ding{55}   & $\mathcal{O}(\epsilon^{-4})$ & $\mathcal{O}(\epsilon^{-4})$& $\mathcal{O}(\epsilon^{-4})$ \\
RSG \cite{beck2023dynamic,peng2023riemannian}         & \ding{55} & $\mathcal{O}(\epsilon^{-3})$ & $\mathcal{O}(\epsilon^{-3})$& $\mathcal{O}(\epsilon^{-3})$ \\
RADA  \cite{xu2024riemannian}   &  \ding{55} & $\mathcal{O}(\epsilon^{-3})$ & $\mathcal{O}(\epsilon^{-3})$& $\mathcal{O}(\epsilon^{-3})$ \\
\hdashline[0.5pt/2pt]
OADMM \cite{yuan2024admm}             &  \ding{51} & $\mathcal{O}(\epsilon^{-3})$ & $\mathcal{O}(\epsilon^{-3})$& $\mathcal{O}(\epsilon^{-3})$ \\
\hdashline[0.5pt/2pt]
 iRPDC-BB [this work]                          & \ding{51}& ${\color{black}\mathcal{O}(\epsilon^{-2})}$ & ${\color{black}\mathcal{O}(\epsilon^{-2})}$& ${\color{black}\mathcal{O}(\epsilon^{-4})}$ \\
  iRPDC-NFG [this work]                          & \ding{51}  & ${\color{black}\mathcal{O}(\epsilon^{-2})}$ & ${\color{black}\mathcal{O}(\epsilon^{-2})}$& ${\color{black}\mathcal{O}(\epsilon^{-3} \log \epsilon^{-1})}$ \\
 iRPDC-AR [this work]                          & \ding{51}& ${\color{black}\mathcal{O}(\epsilon^{-2})}$ & ${\color{black}\mathcal{O}(\epsilon^{-2})}$& ${\color{black}\mathcal{O}(\epsilon^{-3})}$ \\
\hline
\end{tabular}
\end{table}

\subsection{Our contributions} 
While recent works such as \cite{yuan2024admm} and \cite{li2024proximal} have studied the nonsmooth Riemannian DC optimization problem \eqref{equ:prob:dc}, they have not explored its connection to sparse optimization. This paper bridges this gap by establishing this relationship and develops practical algorithms with both outer iteration and overall complexity guarantees for solving problem \eqref{equ:prob:dc}. Our main contributions are as follows:

(i) \emph{Equivalence between Riemannian DC and sparse models}: We show that the DC models \eqref{prob:sphere:l0:capped:l1} and \eqref{prob:sphere:knorm} are equivalent to their sparse counterparts \eqref{prob:sphere:ksparse:l0:regularized} and \eqref{prob:sphere:ksparse} over the sphere manifold, provided the DC parameters are sufficiently large. This is, to our knowledge, the first such equivalence result in the manifold setting, extending similar results from the Euclidean case \cite{le2015dc,gotoh2018dc,bian2020smoothing}.

(ii) \emph{Inexact Riemannian proximal DC (iRPDC) algorithmic  framework}: 
We propose an iRPDC algorithmic framework that incorporates the ManPG method~\cite{chen2020proximal} with the classical DC algorithm~\cite{tao1997convex}. 
A novel inexactness criterion is introduced for solving the subproblem, 
and it serves as the foundation for a linesearch procedure that adaptively captures the local curvature. 
Such a linesearch procedure has not been explicitly considered in existing inexact variants of ManPG~\cite{huang2023inexact,huang2025riemannian}.
We establish that the iRPDC algorithmic  framework attains an $\epsilon$-Riemannian critical point within $\mathcal{O}(\epsilon^{-2})$ iterations. 
When $g(\cdot) = 0$, our framework reduces to a new inexact variant of ManPG.

(iii) \emph{Practical algorithms with complexity guarantees}:  We develop three iRPDC algorithms, namely iRPDC-NFG, iRPDC-BB, and iRPDC-AR, based on different subproblem solvers.  A key feature of these algorithms is that the subproblem tolerance is determined from previous iterates, rather than the current (yet unavailable) one as required in existing methods. 
All three achieve $\mathcal{O}(\epsilon^{-2})$ outer iterations, with respective overall complexities of $\mathcal{O}(\epsilon^{-3}\log \epsilon^{-1})$,  $\mathcal{O}(\epsilon^{-4})$, and $\mathcal{O}(\epsilon^{-3})$. 
Even in the special case where $g(\cdot) = 0$, they lead to new inexact ManPG algorithms with guaranteed iteration complexity. 
A detailed comparison with existing methods is provided in Table~\ref{table:complexity}.

Finally, numerical results on SPCA demonstrate the effectiveness of the proposed Riemannian DC models and the efficiency of iRPDC algorithms. 

The rest of this paper is organized as follows. Section \ref{section:notations} introduces the notation and preliminaries. Section \ref{section:DC:sparse} discusses the equivalence between the DC models \eqref{prob:sphere:l0:capped:l1} and \eqref{prob:sphere:knorm} and their sparse optimization counterparts \eqref{prob:sphere:ksparse:l0:regularized} and \eqref{prob:sphere:ksparse}. Section \ref{section:RDCA} presents the proposed iRPDC algorithmic framework, followed by Section \ref{section:complexity}, which introduces the practical iRPDC algorithms and establishes its overall complexity. Section \ref{section:numerical} reports numerical results, and Section \ref{section:concluding} provides concluding remarks.

\section{Notation and preliminaries}  \label{section:notations}
 This section provides a brief review of the notation and preliminaries used in Riemannian optimization \cite{absil2009optimization,boumal2023introduction}. 
 For a smooth function $f: \Ecal \to \Rbb$, the Riemannian gradient at $x \in \Mcal$, where $\Mcal$ is a Riemannian submanifold  of $\Ecal$ endowed with the metric induced by the ambient space,  is the unique vector $\rgrad f(x)$ satisfying 
\be \label{equ:rgrad}
\iprod{\nabla f(x)}{\eta} = \iprod{\rgrad f(x)}{\eta}, \quad \forall\, \eta \in \tangent_x \Mcal,
\ee 
where $\tangent_x \Mcal$ denotes the tangent space of $\Mcal$ at $x$. 
It is given by  $\rgrad f(x) = \proj_{\tangent_x \Mcal}(\nabla f(x))$, where $\proj_{\tangent_x \Mcal}(\cdot)$  denotes the orthogonal projector onto  $\tangent_x \Mcal$.   For the Stiefel manifold $\stief = \{X \in \Rbb^{n \times r}\mid X^\tran X = I_r\}$, where $I_r$ is the $r$-by-$r$ identity matrix, we have 
$\tangent_X \Mcal = \{\eta \in \Rbb^{n \times r}\mid X^\tran \eta + \eta^\tran X = 0\}$ and 
$\proj_{\tangent_x \Mcal}(d) = d - X (X^\tran d + d^\tran X)/2$ for $d \in \Rbb^{n \times r}$.  We denote the unit sphere by $\Scal = \{x \in \Rbb^n \mid x^\tran x = 1\}$, which is a special Stiefel manifold with $r = 1$.  

For a convex function \(h: \Ecal \to \Rbb\),  let $\partial h(x)$ and $\partial_{\Rcal} h(x)$ denote the Euclidean and Riemannian subdifferential, respectively.  According to \cite[Theorem 5.1]{yang2014optimality},  we have 
\begin{equation}\label{equ:partial:h:R}
\partial_{\Rcal} h(x) = \proj_{\tangent_x \Mcal}(\partial h(x)).
\end{equation} 
 Moreover, for any given $\sigma > 0$, the Moreau envelope and proximal mapping of $h(\cdot)$ are defined by  $M_{\sigma h}(x)= \min_{u \in \Ecal} \big\{ h(u) + (2\sigma)^{-1} \|u - x\|^2 \big\}$ and $\prox_{\sigma h}(x)= \argmin_{u \in 
\Ecal} \big\{ h(u) +  (2\sigma)^{-1}  \|u - x\|^2 \big\}$, respectively.

 A retraction restricted to $\tangent_x \Mcal$ is a smooth mapping $\Retr_x: \tangent_x \Mcal \to \Mcal$, satisfying  (i) $\Retr_x(0_x) = x$, where $0_x$ is the origin of $\tangent_x \Mcal$; (ii) $\frac{\mathrm{d}}{\mathrm{d} t} \Retr_x(t \eta)|_{t = 0} = \eta$ for all $\eta \in \tangent_x \Mcal$. 
We assume that $\Retr_x(\cdot)$ is globally well-defined on $\tangent_x \Mcal$ and satisfies the following  properties \cite{boumal2019global,liu2019quadratic}. 
\begin{assumption}\label{assumption:retraction}
 There exist constants $\iota_1, \iota_2 > 0$ such that   
\be \label{equ:retraction:bound}
\|\Retr_x(\eta) - x\| \leq \iota_1 \|\eta\|, \quad  \|\Retr_x(\eta) - x - \eta \| \leq \iota_2 \|\eta\|^2, \quad \forall\,x \in \Mcal, \eta\in \tangent_x \Mcal.  
\ee
\end{assumption} 

The following result extends the first-order optimality from the Euclidean setting \cite{ahn2017difference,tao1997convex} to the Riemannian setting; see \cite[Theorems 4.1 and 5.1]{yang2014optimality}. 
\begin{lemma} \label{lemma:critical}
Let $\widehat x \in \Mcal$ be a local minimizer of problem \eqref{equ:prob:dc}.  Then, $\widehat x$ is  a Riemannian critical point, namely, $0 \in \grad f(\widehat x) + \partial_{\Rcal} h(\widehat x) - \partial_{\Rcal} g(\widehat x).$
\end{lemma}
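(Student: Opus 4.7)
The plan is to use a standard DC linearization trick to reduce problem \eqref{equ:prob:dc} to a smooth-plus-convex Riemannian optimization problem, and then invoke the Riemannian first-order optimality condition already established for that setting in \cite[Theorems 4.1 and 5.1]{yang2014optimality}. Since $g:\Ecal\to\Rbb$ is convex and finite-valued, $\partial g(\widehat x)\neq\emptyset$; fix any $\xi\in\partial g(\widehat x)$ and define the convex majorant
\begin{equation*}
F_\xi(x) := f(x) + h(x) - g(\widehat x) - \iprod{\xi}{x-\widehat x}.
\end{equation*}
The subgradient inequality $g(x)\geq g(\widehat x)+\iprod{\xi}{x-\widehat x}$ yields $F_\xi(x)\geq F(x)$ on all of $\Ecal$, with equality at $\widehat x$. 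Hence $\widehat x$, being a local minimizer of $F$ on $\Mcal$, is also a local minimizer of $F_\xi$ on $\Mcal$.

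Now observe that $F_\xi = \phi + h$, where $\phi(x):=f(x)-g(\widehat x)-\iprod{\xi}{x-\widehat x}$ is smooth on $\Ecal$ and $h$ is convex. The Riemannian first-order optimality condition for a smooth-plus-convex objective then gives
\begin{equation*}
0 \in \grad \phi(\widehat x) + \partial_{\Rcal} h(\widehat x).
\end{equation*}
By \eqref{equ:rgrad} and the linearity of $\proj_{\tangent_{\widehat x}\Mcal}(\cdot)$, $\grad\phi(\widehat x)=\grad f(\widehat x)-\proj_{\tangent_{\widehat x}\Mcal}(\xi)$, while \eqref{equ:partial:h:R} guarantees $\proj_{\tangent_{\widehat x}\Mcal}(\xi)\in\partial_{\Rcal} g(\widehat x)$. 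Substituting these back produces an element of $\partial_{\Rcal}h(\widehat x)$ and an element of $\partial_{\Rcal}g(\widehat x)$ whose combination with $\grad f(\widehat x)$ is zero, which is precisely the claimed inclusion $0\in\grad f(\widehat x)+\partial_{\Rcal}h(\widehat x)-\partial_{\Rcal}g(\widehat x)$.

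The only nontrivial ingredient is the smooth-plus-convex Fermat rule on $\Mcal$, and this will be the main point I expect a careful reader to question. It is not entirely routine because the pullback $h\circ\Retr_{\widehat x}$ of a convex function through a retraction is generally not convex, so Euclidean convex subdifferential calculus cannot be invoked directly on $\tangent_{\widehat x}\Mcal$. Nevertheless, because $\mathrm{D}\Retr_{\widehat x}(0)=\mathrm{id}_{\tangent_{\widehat x}\Mcal}$, a Clarke subdifferential chain rule applied to $F_\xi\circ\Retr_{\widehat x}$ at the origin identifies the critical-point condition with $0\in\grad\phi(\widehat x)+\partial_{\Rcal}h(\widehat x)$, as carried out in \cite{yang2014optimality}. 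Since that result is already available in the literature, the proof will simply cite it and avoid rederiving the underlying retraction-based calculus.
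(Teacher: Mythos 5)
Your proof is correct. Note that the paper does not actually write out a proof of this lemma: it simply points to \cite[Theorems 4.1 and 5.1]{yang2014optimality}, i.e., it treats the Riemannian Fermat rule for locally Lipschitz functions together with the identity $\partial_{\Rcal}h(x)=\proj_{\tangent_x\Mcal}(\partial h(x))$ as doing all the work, implicitly via Clarke subdifferential calculus applied to the full DC objective $f+h-g$. Your route is slightly different and arguably cleaner: by fixing $\xi\in\partial g(\widehat x)$ and passing to the convex majorant $F_\xi(x)=f(x)+h(x)-g(\widehat x)-\iprod{\xi}{x-\widehat x}$, which dominates $F$ and touches it at $\widehat x$, you only ever need the smooth-plus-convex case of the cited optimality condition, and you avoid invoking a Clarke sum/difference rule for $-g$ (which would require arguing $\partial^{\circ}(-g)=-\partial g$ for convex $g$ and a sum rule with equality). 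The price is that your argument produces the inclusion only for the particular element $\proj_{\tangent_{\widehat x}\Mcal}(\xi)\in\partial_{\Rcal}g(\widehat x)$ you selected, but that is exactly what the stated inclusion $0\in\grad f(\widehat x)+\partial_{\Rcal}h(\widehat x)-\partial_{\Rcal}g(\widehat x)$ requires, so nothing is lost. All intermediate steps (nonemptiness of $\partial g(\widehat x)$, the majorization inequality, $\grad\phi(\widehat x)=\grad f(\widehat x)-\proj_{\tangent_{\widehat x}\Mcal}(\xi)$, and the use of \eqref{equ:partial:h:R}) check out.
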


Inspired by near-approximate stationarity concepts in  \cite{davis2019stochastic,li2024riemannian,tian2024no}, we define the notion of $\epsilon$-Riemannian critical point as follows. 
\begin{definition} \label{def:epsilon:critical}
We say that $x \in \Mcal$ is an $\epsilon$-Riemannian critical point of problem  \eqref{equ:prob:dc}  if  there exists a point $y \in \Ecal$ satisfying $\|y - x\|\leq \epsilon$ such that  
\be\label{equ:Delta}
\dist\big(0, \grad f(x) +  \partial_{\Rcal} h(y) - \partial_{\Rcal} g(x)\big)\leq \epsilon.  
\ee
\end{definition}

%%%%%%%%%%%%%%%%%%%%%%%%%%%%%%%%%%%%%%%%%%%%%%%%%%%%%%%%%%%%
\section{Equivalence between DC and sparse models over manifolds}\label{section:DC:sparse}
This section establishes the equivalence between  DC formulations and sparse models over the manifold.  We begin by exploring the connection between the DC model \eqref{prob:sphere:l0:capped:l1} and the $\ell_0$-regularized model \eqref{prob:sphere:ksparse:l0:regularized}. 
By adapting the proof  techniques from \cite[Theorems 1\,\&\,2]{le2015dc}, we obtain the following results. 
\begin{theorem}
Let $\{\upsilon_t\} \subset \Rbb_+$ be a sequence with $\upsilon_t \to +\infty$ and $x_t^\star$ be a global (or local) minimizer of problem \eqref{prob:sphere:l0:capped:l1} corresponding to $\upsilon = \upsilon_t$.  Then, any accumulation point of  $\{x_t^\star\}$ is a global (or local) minimizer of problem  \eqref{prob:sphere:ksparse:l0:regularized}. 
\end{theorem}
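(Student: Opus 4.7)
The plan is to exploit two elementary facts about the capped sum $\Phi_\upsilon(\cdot)$: first, that $\min\{\upsilon|x_{(i)}|,1\} \leq \mathbb{1}_{\{x_{(i)} \neq 0\}}$, so $\Phi_\upsilon(x) \leq \|x\|_0$ uniformly in $\upsilon > 0$; second, that $\Phi_\upsilon(x) \to \|x\|_0$ pointwise as $\upsilon \to +\infty$, because for each nonzero coordinate the cap of $1$ is eventually attained. These two facts let us sandwich the objective of \eqref{prob:sphere:l0:capped:l1} against the objective of \eqref{prob:sphere:ksparse:l0:regularized} and pass to the limit.

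Let $\{x_{t_j}^\star\}$ be a subsequence converging to some $x^\star \in \Mcal$ (closedness of $\Mcal$ gives $x^\star \in \Mcal$). Consider first the \emph{global} case. For any fixed $x \in \Mcal$, the optimality of $x_{t_j}^\star$ yields
\begin{equation*}
f(x_{t_j}^\star) + \sigma \Phi_{\upsilon_{t_j}}(x_{t_j}^\star) \leq f(x) + \sigma \Phi_{\upsilon_{t_j}}(x).
\end{equation*}
On the right-hand side, continuity of $f$ together with the pointwise convergence $\Phi_{\upsilon_{t_j}}(x) \to \|x\|_0$ gives an upper limit of $f(x) + \sigma \|x\|_0$. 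For the left-hand side, I would argue the crucial inequality
\begin{equation*}
\liminf_{j \to \infty} \Phi_{\upsilon_{t_j}}(x_{t_j}^\star) \geq \|x^\star\|_0,
\end{equation*}
by noting that for every index $i$ with $x^\star_{(i)} \neq 0$, we have $|x_{t_j,(i)}^\star| \geq |x^\star_{(i)}|/2$ for $j$ large and $\upsilon_{t_j} |x_{t_j,(i)}^\star| \to +\infty$, so $\min\{\upsilon_{t_j}|x_{t_j,(i)}^\star|,1\} \to 1$; the remaining indices contribute nonnegatively. Combined with $f(x_{t_j}^\star) \to f(x^\star)$, this produces $f(x^\star) + \sigma \|x^\star\|_0 \leq f(x) + \sigma \|x\|_0$ for arbitrary $x \in \Mcal$, so $x^\star$ is a global minimizer of \eqref{prob:sphere:ksparse:l0:regularized}.

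For the \emph{local} case, fix a radius $\delta > 0$ such that $x_t^\star$ is a minimizer of \eqref{prob:sphere:l0:capped:l1} on $B(x_t^\star,\delta) \cap \Mcal$ for all large $t$ (shrinking $\delta$ once, one can arrange a uniform radius by the local-minimizer definition after restriction; alternatively, argue with a fixed ball $B(x^\star,\delta)$ around the limit). For $j$ large, $\|x_{t_j}^\star - x^\star\| \leq \delta/2$, so any $x \in B(x^\star,\delta/2) \cap \Mcal$ also lies in $B(x_{t_j}^\star,\delta) \cap \Mcal$, and the same comparison inequality applies. Taking $j \to \infty$ with the upper- and lower-limit arguments above establishes that $x^\star$ is a local minimizer of \eqref{prob:sphere:ksparse:l0:regularized}.

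The main obstacle is the lower-semicontinuity step $\liminf_j \Phi_{\upsilon_{t_j}}(x_{t_j}^\star) \geq \|x^\star\|_0$, since $\Phi_\upsilon$ depends jointly on the varying parameter $\upsilon_{t_j}$ and the varying point $x_{t_j}^\star$, and $\|\cdot\|_0$ is not continuous. This is handled index-by-index as above using the fact that only nonzero coordinates of $x^\star$ matter for the lower bound; zero coordinates can contribute less than one in the prelimit, but that only helps the inequality. Everything else — continuity of $f$, the easy upper bound $\Phi_{\upsilon_{t_j}}(x) \leq \|x\|_0$ with pointwise convergence at a fixed $x$, and the manifold closedness — is routine, so I would relegate the manifold-specific considerations to a remark noting that only closedness of $\Mcal$ is used and the argument is otherwise identical to the Euclidean one in \cite{le2015dc}.
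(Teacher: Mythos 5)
Your treatment of the \emph{global} case is correct and is essentially the standard sandwich argument that the paper itself invokes (it states this theorem without proof, deferring to \cite[Theorems 1\,\&\,2]{le2015dc}): the uniform upper bound $\Phi_{\upsilon}(x)\leq\|x\|_0$, the pointwise convergence at a fixed comparison point, and the index-by-index estimate $\liminf_{j}\Phi_{\upsilon_{t_j}}(x_{t_j}^\star)\geq\|x^\star\|_0$ (only the coordinates in the support of $x^\star$ matter, and each of their capped terms tends to $1$) are exactly what is needed, and you execute them correctly.

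The \emph{local} case, however, contains a genuine gap, located precisely at the step you try to dismiss in parentheses. A sequence of local minimizers comes with radii $\delta_t>0$ that may shrink to zero, and nothing in the hypotheses rules this out; ``shrinking $\delta$ once'' is not available when the radii are indexed by $t$, and switching to a fixed ball $B(x^\star,\delta)$ merely relocates the problem, since local optimality of $x_{t_j}^\star$ over $B(x^\star,\delta)\cap\Mcal$ is exactly what is unknown. The issue is not cosmetic: for large $\upsilon_t$ the capped term is locally constant near any point whose nonzero coordinates are bounded away from zero, so near such a point the local minimizers of \eqref{prob:sphere:l0:capped:l1} are just the local minimizers of $f$. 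If $f$ has smooth strict local minima accumulating at a point $a$ with nonzero coordinates at which $f$ is \emph{not} locally minimal (an oscillation of the type $e^{-1/(s-a)^2}\sin\!\left(1/(s-a)\right)$), these produce local minimizers of \eqref{prob:sphere:l0:capped:l1} converging to $a$, while $a$ fails to be a local minimizer of \eqref{prob:sphere:ksparse:l0:regularized} because $\|\cdot\|_0$ is locally constant there. So your limiting argument cannot close without an additional hypothesis (a common radius of local optimality along the sequence), and the local half of the claim should either carry such a hypothesis or be obtained by a different route, e.g.\ via the finite-$\upsilon$ exactness results (Lemma~\ref{theorem:equiv:sparse:l0:regularized:00} and Theorem~\ref{theorem:equiv:sparse:l0:regularized}), which is how the paper actually handles local minimizers on the sphere.
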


While the above result holds asymptotically, we next show that exact equivalence holds on the sphere for a finite  $\upsilon$. To this end,  inspired by \cite[Lemma 2.3]{bian2020smoothing},  we first establish a lower bound property of the Riemannian critical points of problem \eqref{prob:sphere:l0:capped:l1}.   
\begin{lemma}\label{theorem:equiv:sparse:l0:regularized:00}  
Let $\Mcal = \Scal$ in problem \eqref{prob:sphere:l0:capped:l1} and let  $\bar x \in \Scal$ be a Riemannain critical point of problem \eqref{prob:sphere:l0:capped:l1}.  If  $\upsilon \geq {L_f^0}/{\sigma} + \sqrt{n}$, then for each index $i$, either $|\bar x_{(i)}| \geq 1/\upsilon$ or $\bar x_{(i)} = 0$. Consequently, $\Phi_{\capp}(\bar x) = \|\bar x\|_0$. 
\end{lemma}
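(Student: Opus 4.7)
My plan would be to argue by contradiction using the Lagrange-type form that the Riemannian critical point condition takes on the sphere. Since the normal space of $\Scal$ at $\bar x$ equals $\{\lambda\bar x : \lambda \in \Rbb\}$, Lemma~\ref{lemma:critical} combined with \eqref{equ:partial:h:R} and the DC decomposition $h(x) = \sigma\upsilon\|x\|_1$, $g(x) = \sigma\sum_i \max\{\upsilon|x_{(i)}|-1, 0\}$ would guarantee the existence of $u \in \partial h(\bar x)$, $v \in \partial g(\bar x)$, and a scalar $\lambda \in \Rbb$ satisfying
\begin{equation*}
\nabla f(\bar x) + u - v = \lambda \bar x.
\end{equation*}
I would then suppose, toward contradiction, that some coordinate $i$ obeys $0 < |\bar x_{(i)}| < 1/\upsilon$ and derive incompatible lower and upper bounds on $\lambda$.

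\textbf{Componentwise subgradients and lower bound on $\lambda$.} A direct coordinate-by-coordinate analysis of the subdifferentials shows: $u_j - v_j = 0$ if $|\bar x_{(j)}| > 1/\upsilon$; $u_j - v_j = \sigma\upsilon\,\sgn(\bar x_{(j)})$ if $0 < |\bar x_{(j)}| < 1/\upsilon$; and $|u_j - v_j| \leq \sigma\upsilon$ in all remaining cases. Assuming without loss of generality that $\bar x_{(i)} > 0$, the $i$-th component of the Lagrange-type equation reads $[\nabla f(\bar x)]_i + \sigma\upsilon = \lambda \bar x_{(i)}$. Using $|[\nabla f(\bar x)]_i| \leq \|\nabla f(\bar x)\| \leq L_f^0$ together with $\bar x_{(i)} < 1/\upsilon$, this produces the strict lower bound
\begin{equation*}
\lambda \geq \frac{\sigma\upsilon - L_f^0}{\bar x_{(i)}} > \upsilon(\sigma\upsilon - L_f^0),
\end{equation*}
where positivity of $\sigma\upsilon - L_f^0$ follows from $\upsilon \geq L_f^0/\sigma + \sqrt n > L_f^0/\sigma$.

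\textbf{Upper bound on $\lambda$ and contradiction.} To bound $\lambda$ from above, I would take the inner product of $\nabla f(\bar x) + u - v = \lambda \bar x$ with $\bar x$ (using $\|\bar x\|^2 = 1$) to get $\lambda = \bar x^\tran \nabla f(\bar x) + \bar x^\tran(u - v)$. The first term is at most $L_f^0$ by Cauchy--Schwarz. For the second term, the crucial observation is that $(u - v)_j \neq 0$ only when $|\bar x_{(j)}| \leq 1/\upsilon$, so each summand satisfies $|\bar x_{(j)}(u_j - v_j)| \leq |\bar x_{(j)}|\,\sigma\upsilon \leq \sigma$, giving $|\bar x^\tran(u - v)| \leq \sigma n$ and hence $\lambda \leq L_f^0 + \sigma n$. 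Combining with the lower bound and invoking $\upsilon \geq L_f^0/\sigma + \sqrt n$, which implies $\sigma\upsilon - L_f^0 \geq \sigma\sqrt n$ and $\sqrt n \geq 1$, one obtains
\begin{equation*}
\upsilon(\sigma\upsilon - L_f^0) \geq \upsilon\cdot\sigma\sqrt n \geq \sigma n + L_f^0\sqrt n \geq \sigma n + L_f^0,
\end{equation*}
which together with the strict lower bound on $\lambda$ contradicts $\lambda \leq L_f^0 + \sigma n$. The consequence $\Phi_\capp(\bar x) = \|\bar x\|_0$ follows at once: each nonzero coordinate of $\bar x$ satisfies $\upsilon|\bar x_{(i)}| \geq 1$ and contributes $1$, while each zero coordinate contributes $0$.

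\textbf{Main obstacle.} The delicate step is obtaining an upper bound on $\lambda$ that is sharp enough to contradict the lower bound. A naive estimate such as $|\bar x^\tran(u-v)| \leq \|\bar x\|\,\|u-v\| \leq \sigma\upsilon\sqrt n$ grows linearly in $\upsilon$ and cannot close the argument. The key is the coordinatewise observation that on the support of $u - v$ the modulus $|\bar x_{(j)}|$ is automatically at most $1/\upsilon$, which converts the factor $\sigma\upsilon$ into $\sigma$ in each summand and makes the two bounds on $\lambda$ incompatible under the stated threshold on $\upsilon$.
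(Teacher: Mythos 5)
Your proof is correct, and it follows the same overall skeleton as the paper's: argue by contradiction at a coordinate with $0<|\bar x_{(i)}|<1/\upsilon$, use the sphere stationarity condition together with the fact that the capped-$\ell_1$ subgradient has magnitude exactly $\sigma\upsilon$ at such a coordinate, and play this off against the bound $\|\nabla f(\bar x)\|\le L_f^0$. Where you genuinely diverge is in how the normal-space term is controlled. The paper keeps the condition in projected form, $0=\grad f(\bar x)+\widetilde\xi-\iprods{\bar x}{\widetilde\xi}\bar x$, bounds $|\iprods{\bar x}{\widetilde\xi}|\le\sigma\upsilon\sqrt n$ by Cauchy--Schwarz, and then tames the factor $\upsilon$ by multiplying by $|\bar x_{(j)}|<1/\upsilon$ at the single offending coordinate, yielding $|(\grad f(\bar x))_{(j)}|>\sigma(\upsilon-\sqrt n)$ and hence the contradiction. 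You instead introduce the multiplier $\lambda$ explicitly, obtain the lower bound $\lambda>\upsilon(\sigma\upsilon-L_f^0)$ from the offending coordinate, and obtain the upper bound $\lambda\le L_f^0+\sigma n$ via the support observation that $u-v$ vanishes wherever $|\bar x_{(j)}|>1/\upsilon$, so that each product $|\bar x_{(j)}(u_j-v_j)|\le\sigma$. That refinement is genuinely needed in your formulation (as you note, the crude bound $\sigma\upsilon\sqrt n$ would not close your version of the argument at the stated threshold, whereas it suffices in the paper's arrangement), and it is a nice structural observation that the paper does not use; on the other hand, the paper's estimate is slightly more economical and yields the threshold $\upsilon\ge L_f^0/\sigma+\sqrt n$ exactly, while yours verifies that this threshold is sufficient for a quadratic-in-$\upsilon$ inequality with a bit of room to spare. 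Both arguments are valid under the stated hypothesis.
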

\begin{proof}
The tangent space at $\bar x$ is  $\tangent_{\bar x} \Scal = \{d \in \Rbb^n \mid \bar x^\tran d = 0\}$, and the projection is  $\proj_{\tangent_{\bar x} \Scal}(\eta) = \eta - \iprods{\bar x}{\eta} \bar x $ for any $\eta \in \mathbb{R}^n$.  Since $\bar x$ is a Riemannian critical point of problem \eqref{prob:sphere:l0:capped:l1}, 
it follows from \eqref{equ:partial:h:R}, Lemma \ref{lemma:critical}, and  Example \ref{equ:prob:capped:l1} that 
\be\label{equ:lemma:dc:equiv:1:00}
0 = \grad f(\bar x) +  \widetilde \xi -  \iprods{\bar x}{\widetilde \xi} \bar x
\ee
for some  $\widetilde \xi \in \partial h(\bar x) - \partial g(\bar x)$ with $h(\bar x) =  \sigma\upsilon \|\bar x\|_1$ and 
 $g(\bar x) =  \sigma\sum_i \max\{\upsilon |\bar  x_{(i)}|-1, 0\}$. Suppose for contradiction that $0 < |\bar x_{(j)} | < 1/\upsilon$ for some $j$.  Then $|\widetilde \xi_{(j)}|  = \sigma \upsilon$. Since \(\|\bar{x}\| = 1\) and $|\widetilde \xi_{(i)}|\leq \sigma \upsilon$ for all $i$,  it follows that  $\iprods{\bar x}{\widetilde \xi}| \leq \|\bar x\| \cdot \|\widetilde \xi\| \leq \sigma \upsilon \sqrt{n}$.  This, together with \eqref{equ:lemma:dc:equiv:1:00}, leads to
\be\label{equ:lemma:dc:equiv:1:01}
|(\grad f(\bar x))_{(j)}| =  \big| \widetilde \xi_{(j)}  -\iprods{\bar x}{\widetilde \xi} \bar x_{(j)} \big| \\  
>   \sigma \upsilon (1  -    \sqrt{n}  |\bar x_{(j)}|) >\sigma(\upsilon - \sqrt{n}). 
\ee
On the other hand, since \( f \) is \(L_f^0\)-Lipschitz continuous, we have 
\be \label{equ:lemma:dc:equiv:1:02}
|(\grad f(\bar x))_{(j)}|\leq \|\grad f(\bar x)\| = \|\proj_{\tangent_{\bar x} \Scal} (\nabla f(\bar x)) \|\leq \|\nabla f(\bar x)\| \leq L_f^0. 
\ee
Combining \eqref{equ:lemma:dc:equiv:1:01} and \eqref{equ:lemma:dc:equiv:1:02} gives $\upsilon < L_f^0 / \sigma + \sqrt{n}$, contradicting the assumption. Thus, no such $j$ exists, and the result follows. In particular, this implies $\Phi_{\capp}(\bar x) = \|\bar x\|_0$.
\end{proof}

\begin{theorem}\label{theorem:equiv:sparse:l0:regularized}  
Let $\Mcal = \Scal$ in problems \eqref{prob:sphere:ksparse:l0:regularized} and \eqref{prob:sphere:l0:capped:l1}. If $\upsilon \geq {L_f^0}/{\sigma} + \sqrt{n}$, then the two problems share the same set of global minimizers. Moreover, any local minimizer of problem \eqref{prob:sphere:l0:capped:l1} is also a local minimizer of problem \eqref{prob:sphere:ksparse:l0:regularized}.  
\end{theorem}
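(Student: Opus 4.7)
My plan rests on two elementary observations, combined with Lemma \ref{theorem:equiv:sparse:l0:regularized:00}. First, for every $x \in \Ecal$ and every index $i$, we have $\min\{\upsilon|x_{(i)}|,1\} \leq \mathbf{1}_{\{x_{(i)}\neq 0\}}$, so the pointwise bound $\Phi_{\capp}(x) \leq \|x\|_0$ holds on all of $\Ecal$. Second, for any Riemannian critical point $\bar x$ of \eqref{prob:sphere:l0:capped:l1} on $\Scal$ with $\upsilon \geq L_f^0/\sigma + \sqrt{n}$, Lemma \ref{theorem:equiv:sparse:l0:regularized:00} tells us that each component of $\bar x$ is either zero or has magnitude at least $1/\upsilon$, and in particular $\Phi_{\capp}(\bar x) = \|\bar x\|_0$. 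Since Lemma \ref{lemma:critical} guarantees that every local (hence global) minimizer of \eqref{prob:sphere:l0:capped:l1} is Riemannian critical, both of these facts apply at any minimizer of the capped model.

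For the equality of global minimizer sets, I would let $\bar x$ be a global minimizer of \eqref{prob:sphere:l0:capped:l1} and let $x^\star$ be a global minimizer of \eqref{prob:sphere:ksparse:l0:regularized}. Chaining the identity $\Phi_{\capp}(\bar x)=\|\bar x\|_0$, the optimality of $\bar x$ for \eqref{prob:sphere:l0:capped:l1}, the pointwise bound $\Phi_{\capp}(x^\star) \leq \|x^\star\|_0$, and the optimality of $x^\star$ for \eqref{prob:sphere:ksparse:l0:regularized} gives
\begin{equation*}
f(\bar x) + \sigma\|\bar x\|_0 = f(\bar x) + \sigma\Phi_{\capp}(\bar x) \leq f(x^\star) + \sigma\Phi_{\capp}(x^\star) \leq f(x^\star) + \sigma\|x^\star\|_0 \leq f(\bar x) + \sigma\|\bar x\|_0,
\end{equation*}
so all four quantities coincide. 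Reading the chain in both directions yields that $\bar x$ globally minimizes \eqref{prob:sphere:ksparse:l0:regularized} and $x^\star$ globally minimizes \eqref{prob:sphere:l0:capped:l1}, proving set equality.

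For the local-minimizer direction, let $\bar x$ be a local minimizer of \eqref{prob:sphere:l0:capped:l1} with neighborhood radius $\delta_0$. The main obstacle is to pass from $\Phi_{\capp}$-optimality to $\|\cdot\|_0$-optimality in a small neighborhood, which relies crucially on the lower bound $|\bar x_{(i)}| \geq 1/\upsilon$ whenever $\bar x_{(i)}\neq 0$. This bound enforces a form of sparsity pattern stability: if I choose $\delta := \min\{\delta_0,\,1/(2\upsilon)\}$, then for any $x \in \Scal$ with $\|x-\bar x\|\leq \delta$, every originally nonzero coordinate of $\bar x$ remains nonzero in $x$, so $\|x\|_0 \geq \|\bar x\|_0$. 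Combining this with $\Phi_{\capp}(\bar x)=\|\bar x\|_0$, the local optimality of $\bar x$ for \eqref{prob:sphere:l0:capped:l1}, and the pointwise bound $\Phi_{\capp}(x)\leq \|x\|_0$ yields
\begin{equation*}
f(\bar x) + \sigma\|\bar x\|_0 = f(\bar x) + \sigma\Phi_{\capp}(\bar x) \leq f(x) + \sigma\Phi_{\capp}(x) \leq f(x) + \sigma\|x\|_0
\end{equation*}
for every $x \in \Scal$ with $\|x-\bar x\|\leq \delta$, establishing that $\bar x$ is a local minimizer of \eqref{prob:sphere:ksparse:l0:regularized}. The only subtle point is the choice of $\delta$, but the quantitative lower bound from Lemma \ref{theorem:equiv:sparse:l0:regularized:00} makes this step routine.
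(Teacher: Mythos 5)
Your proposal is correct and follows essentially the same route as the paper: both parts hinge on Lemma \ref{theorem:equiv:sparse:l0:regularized:00} giving $\Phi_{\capp}(\bar x)=\|\bar x\|_0$ at any critical point together with the pointwise bound $\Phi_{\capp}(x)\leq\|x\|_0$, and the resulting inequality chains match the paper's. The only difference is that your ``sparsity pattern stability'' digression (shrinking $\delta$ so that $\|x\|_0\geq\|\bar x\|_0$ nearby) is never actually used in the final chain for the local-minimizer claim, so it can be dropped, as the paper does.
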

\begin{proof} 
Let $x^\star \in \Scal$ and $x_{\upsilon}^\star \in \Scal$ be the global minimizers of problems \eqref{prob:sphere:ksparse:l0:regularized} and \eqref{prob:sphere:l0:capped:l1}, respectively.  
By the optimality of $x_{\upsilon}^\star$ and $x^\star$, and  the property $\Phi_{\capp}(x) \leq \|x\|_0$ for  any $x \in \mathbb{R}^n$, we have 
\[
f(x_\upsilon^\star) + \sigma \Phi_{\capp}(x_\upsilon^\star)  \leq f(x^\star) + \sigma \Phi_{\capp}(x^\star)  \leq f(x^\star) + \sigma \|x^\star\|_0  \leq f(x_\upsilon^\star) + \sigma \|x_\upsilon^\star\|_0.
\]
 Lemma \ref{theorem:equiv:sparse:l0:regularized:00} yields $\|x_\upsilon^\star\|_0 = \Phi_{\capp}(x_\upsilon^\star)$, so  
$f(x_\upsilon^\star) + \sigma \Phi_{\capp}(x_\upsilon^\star) =   f(x_\upsilon^\star) + \sigma \|x_\upsilon^\star\|_0,$
confirming both problems share identical global minimizers.

To prove the second claim, let $\tilde x$ be a local minimizer of \eqref{prob:sphere:l0:capped:l1}. Then, there exist a neighborhood $\mathcal{N}$ of $x$ such that $f(\tilde x) + \sigma \Phi_{\upsilon}(\tilde x) \leq 
f(x) + \sigma \Phi_{\upsilon}(x)$ for all $x \in \mathcal{N} \cap \Scal$.  By Lemmas \ref{lemma:critical} and  \ref{theorem:equiv:sparse:l0:regularized:00}, we have $\Phi_{\upsilon}(\tilde x) = \|\tilde x\|_0$. Moreover, since $\Phi_{\upsilon}(x) \leq \|x\|_0$ for any $x\in \mathbb{R}^n$,   it follows that  $f(\tilde x) + \sigma \|\tilde x\|_0\leq f(x) + \sigma \|x\|_0$ for all $x \in \mathcal{N}\cap \Scal$. This means that $\tilde x$ is also a local minimizer of problem \eqref{prob:sphere:ksparse:l0:regularized}.  This completes the proof.  
\end{proof}

Next, we show the equivalence between the DC model \eqref{prob:sphere:knorm} and the $\ell_0$-constrained model \eqref{prob:sphere:ksparse}, following an argument similar to \cite[Theorem 17.1]{nocedal1999numerical}.
\begin{theorem}
Let $\{\gamma_t\} \subset \Rbb_+$  with $\gamma_t \to +\infty$, and let $x_t^\star$ be a global minimizer of problem \eqref{prob:sphere:knorm}  with $\gamma = \gamma_t$. Then, any accumulation point of  $\{x_t^\star\}$ is a global minimizer of problem  \eqref{prob:sphere:ksparse}. Moreover, if each $x_t^*$ is a local minimizer and some accumulation point $x^*$ is feasible for problem  \eqref{prob:sphere:ksparse}, then $x^*$ is also a local minimizer of problem  \eqref{prob:sphere:ksparse}.
\end{theorem}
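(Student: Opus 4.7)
My plan is to adapt the classical exact-penalty argument (in the spirit of \cite[Theorem~17.1]{nocedal1999numerical}) to the Riemannian setting, leveraging two elementary properties of the DC penalty
\[
P(x) \;:=\; \|x\|_1 - \normmm{x}_k \;=\; \sum_{i=k+1}^n |x_{[i]}|.
\]
Namely, $P$ is continuous on $\Ecal$ with $P(x) \geq 0$, and $P(x) = 0$ if and only if $\|x\|_0 \leq k$. Together with \cref{assumption:f:h:g} (continuity of $f$ and compactness of the level set), these two facts drive both parts of the proof.

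For the global-minimizer claim, I would let $\{x_{t_j}^\star\}$ be a subsequence converging to $x^\star$ and fix any feasible $\bar x \in \Mcal$ with $\|\bar x\|_0 \leq k$. Since $P(\bar x) = 0$, the global optimality of $x_t^\star$ yields
\[
f(x_t^\star) + \gamma_t\, P(x_t^\star) \;\leq\; f(\bar x).
\]
Because $\{x_{t_j}^\star\}$ is bounded and $f$ is continuous, $f(\bar x) - f(x_{t_j}^\star)$ stays bounded; dividing by $\gamma_{t_j} \to +\infty$ then forces $P(x_{t_j}^\star) \to 0$, and continuity of $P$ gives $P(x^\star) = 0$, i.e., $\|x^\star\|_0 \leq k$, so $x^\star$ is feasible for~\eqref{prob:sphere:ksparse}. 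Dropping the nonnegative penalty in the displayed inequality and letting $j \to \infty$ yields $f(x^\star) \leq f(\bar x)$ for every feasible $\bar x$, which is exactly global optimality of $x^\star$.

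For the local-minimizer statement, I would localize the preceding argument. Given the feasible accumulation point $x^\star$, choose a small closed neighborhood $\bar{\Ncal} \subset \Mcal$ of $x^\star$ which is compact, and for each large $j$ introduce an auxiliary restricted minimizer
\[
\tilde x_j \in \argmin_{z \in \bar{\Ncal}} \big\{\, f(z) + \gamma_{t_j} P(z)\, \big\},
\]
which exists by compactness. Applying the Part~1 argument to this restricted problem, with $x^\star$ itself as the reference feasible point, shows that any accumulation point $\tilde x$ of $\{\tilde x_j\}$ is feasible and satisfies $f(\tilde x) \leq f(y)$ for every feasible $y \in \bar{\Ncal}$; in particular $f(\tilde x) \leq f(x^\star)$. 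To upgrade this to equality, I would invoke the local optimality of $x_{t_j}^\star$ together with $x_{t_j}^\star \to x^\star$: since $x_{t_j}^\star$ eventually lies in the interior of $\bar{\Ncal}$, one can compare the penalized-objective values at $x_{t_j}^\star$ and $\tilde x_j$; combined with continuity of $f$ and the relation $\gamma_{t_j} P(x_{t_j}^\star) \to 0$ (obtained by testing the local optimality of $x_{t_j}^\star$ against nearby feasible points as in Part~1), this forces $f(\tilde x) = f(x^\star)$. The conclusion $f(x^\star) \leq f(y)$ for all feasible $y \in \bar{\Ncal}$ then gives local optimality of $x^\star$ for~\eqref{prob:sphere:ksparse}.

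The main obstacle is precisely this second part: the neighborhoods on which $x_{t_j}^\star$ is locally optimal may shrink with $j$, so $x^\star$ cannot be inserted directly into the local comparison $F_{\gamma_{t_j}}(x_{t_j}^\star) \leq F_{\gamma_{t_j}}(\cdot)$. Passing to the auxiliary restricted global minimizer $\tilde x_j$ on the fixed compact set $\bar{\Ncal}$ is the key device that bypasses this, reducing the local case to a restricted version of Part~1 together with a value comparison between $x_{t_j}^\star$ and $\tilde x_j$.
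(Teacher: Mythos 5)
Your Part~1 is correct and is the standard asymptotic-penalty argument; no issues there. The gap is in Part~2, at the step where you assert that comparing penalized objective values at $x_{t_j}^\star$ and $\tilde x_j$ ``forces $f(\tilde x)=f(x^\star)$''. The only comparison actually available is $f(\tilde x_j)+\gamma_{t_j}P(\tilde x_j)\le f(x_{t_j}^\star)+\gamma_{t_j}P(x_{t_j}^\star)$, since $\tilde x_j$ is the global minimizer over $\bar{\Ncal}$ and $x_{t_j}^\star\in\bar{\Ncal}$ for large $j$; this points in the \emph{same} direction as the inequality $f(\tilde x)\le f(x^\star)$ you already have and cannot produce the reverse bound $f(\tilde x)\ge f(x^\star)$. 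To reverse it you would need $f(x_{t_j}^\star)+\gamma_{t_j}P(x_{t_j}^\star)\le f(\tilde x_j)+\gamma_{t_j}P(\tilde x_j)$, which follows from the local optimality of $x_{t_j}^\star$ only if $\tilde x_j$ happens to fall inside the (uncontrolled, possibly shrinking) neighborhood on which $x_{t_j}^\star$ is locally optimal. So the auxiliary restricted minimizer does not actually bypass the shrinking-neighborhood obstacle you correctly identified. The subsidiary claim $\gamma_{t_j}P(x_{t_j}^\star)\to 0$ is also unsupported: the neighborhood of local optimality of $x_{t_j}^\star$ may contain no feasible point to test against.

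This gap is not cosmetic. Your argument uses only that $P:=\|\cdot\|_1-\normmm{\cdot}_k$ is continuous, nonnegative, and vanishes exactly on the feasible set, and under those hypotheses alone the ``Moreover'' claim is false. Take $\Mcal=\Rbb$, $f(x)=-x$, $C=\{0\}\cup\{1/m:m\ge 1\}$, and $P=\dist(\cdot,C)$: for $\gamma>1$ every point $1/m$ is a strict local minimizer of $f+\gamma P$, so $x_t^\star=1/m_t$ with $m_t\to\infty$ is a sequence of local minimizers converging to the feasible point $0$, yet $0$ is not a local minimizer of $f$ on $C$ because $f(1/m)<f(0)$ for all $m$. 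Hence any correct proof of the local statement must exploit structure of $\mathcal{S}_k=\{x\in\Mcal:\|x\|_0\le k\}$ that your proof never invokes --- for instance that it is a \emph{finite} union of closed coordinate pieces, or a local error bound of the type in Lemma~\ref{lemma:error:bound:sparse:knorm} relating $\dist(x,\mathcal{S}_k)$ to $P(x)$. The paper only points to the classical penalty argument (which covers the global part); for the local part you need this additional ingredient, and as written your Part~2 does not close.
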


As in the previous case, the above equivalence holds only asymptotically. We now show that on the sphere manifold, a similar result can be obtained for a finite $\gamma$. As a first step, we establish a local error bound for the feasible set  $\mathcal{S}_k := \{x\in \Scal \mid \|x\|_0 \leq k\}$ of problem \eqref{prob:sphere:ksparse}.
\begin{lemma}\label{lemma:error:bound:sparse:knorm}
For any $x \in \Scal$, we have 
 \be \label{equ:error:bound}
 \dist(x, \mathcal{S}_k) \leq \sqrt{2}(1 + \sqrt{k/n})^{-1/2}\left(\|x\|_1 - \normmm{x}_k \right).
  \ee 
\end{lemma}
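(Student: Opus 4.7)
The strategy is to exhibit a concrete candidate $\hat x \in \Scal_k$ close to $x$, then bound $\|x-\hat x\|$ by the DC residual $\|x\|_1-\normmm{x}_k$. The natural candidate is the \emph{renormalized hard thresholding}
\[
\hat x := \Psf_k(x)/\|\Psf_k(x)\|,
\]
where $\Psf_k(x)$ zeroes out all but the $k$ entries of $x$ with the largest absolute values. Note that if $\Psf_k(x)=0$, then $x=0$, which contradicts $x\in\Scal$; so $\hat x$ is well defined and belongs to $\Scal_k$.

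First I would reduce everything to two scalars. Let $a:=\|\Psf_k(x)\|\in(0,1]$ and $b:=\|x\|_1-\normmm{x}_k=\sum_{i=k+1}^n |x_{[i]}|$. Since $\Psf_k(x)$ coincides with $x$ on its support and is zero elsewhere, $\langle x,\Psf_k(x)\rangle=a^2$, and a direct expansion yields
\[
\|x-\hat x\|^2 \;=\; \|x\|^2 - 2\langle x,\Psf_k(x)\rangle/a + \|\Psf_k(x)\|^2/a^2 \;=\; 2(1-a).
\]
Hence $\dist(x,\Scal_k)^2 \le 2(1-a)$, and the task becomes proving
\begin{equation}\label{eq:plan-main}
2(1-a)\;\le\;\frac{2\,b^2}{1+\sqrt{k/n}}.
\end{equation}

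The proof of \eqref{eq:plan-main} rests on two simple inequalities that together close the gap. The first is the elementary identity that the square of a sum of nonnegatives dominates the sum of squares: since $|x_{[i]}|\ge 0$ for $i>k$,
\[
b^2=\Bigl(\sum_{i=k+1}^n |x_{[i]}|\Bigr)^2 \;\ge\; \sum_{i=k+1}^n x_{[i]}^2 \;=\; 1-a^2 \;=\; (1-a)(1+a).
\]
The second is the key monotone-tail estimate $a\ge\sqrt{k/n}$, which I would establish by comparing the average of the top-$k$ squared magnitudes against the bottom-$(n-k)$ ones: because $|x_{[i]}|$ is nonincreasing in $i$, the average of $x_{[1]}^2,\dots,x_{[k]}^2$ is at least the average of $x_{[k+1]}^2,\dots,x_{[n]}^2$, giving $a^2/k \ge (1-a^2)/(n-k)$, i.e.\ $a^2\ge k/n$. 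Combining the two yields $b^2 \ge (1-a)(1+\sqrt{k/n})$, which is exactly \eqref{eq:plan-main} after taking square roots.

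The main obstacle is obtaining a dimension-dependent improvement over the trivial bound $b^2\ge 1-a^2$ needed to produce the factor $1+\sqrt{k/n}$; without the lower bound $a\ge\sqrt{k/n}$ one would only recover a factor of $1+a$, which degenerates near $a=0$. The observation that $a\ge\sqrt{k/n}$ for every $x\in\Scal$ (with equality precisely at the equi-magnitude vector $n^{-1/2}(1,\ldots,1)$) is what saves the argument and yields a uniform constant. All other steps are routine algebra once the candidate $\hat x$ and the two quantities $a,b$ are in place.
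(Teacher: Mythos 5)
Your proof is correct and follows essentially the same route as the paper's: both use the renormalized top-$k$ truncation as the candidate point in $\mathcal{S}_k$, the identity $\|x-\hat x\|^2=2(1-a)$, the bound $1-a^2\le(\|x\|_1-\normmm{x}_k)^2$ from squaring the tail sum, and the lower bound $a^2\ge k/n$ obtained by comparing the top-$k$ and bottom-$(n-k)$ squared magnitudes. The only cosmetic difference is that the paper writes $2(1-a)=2(1-a^2)/(1+a)$ before inserting the two estimates, whereas you combine them into $b^2\ge(1-a)(1+\sqrt{k/n})$ first; the content is identical.
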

\begin{proof}
Without loss of generality, assume  $|x_{(1)}| \geq |x_{(2)}| \geq \cdots \geq  |x_{(n)}|$.  
  Let  $x_{1:k} =  (x_{(1)}, x_{(2)}, \ldots, x_{(k)})^\tran$. Since $x \in \Scal$, we have 
  \be \label{equ:hatx:bound:2}
  \begin{aligned} 
  1 - \|x_{1:k}\|^2 ={}&   |x_{(k + 1)}|^2 + |x_{(k+2)}|^2  + \cdots + |x_{(n)}|^2  \\  \leq{}&  ( |x_{(k + 1)}| +  |x_{(k+2)}| + \cdots + |x_{(n)}|)^2 = \left(\|x\|_1 - \normmm{x}_k\right)^2. 
  \end{aligned}
  \ee 
 Moreover, for $k+1 \leq i \leq n$, we have $|x_{(i)}|^2 \leq k^{-1} \|x_{1:k}\|^2$, so the first equality in \eqref{equ:hatx:bound:2} implies 
 $1 - \|x_{1:k}\|^2\leq (n-k)k^{-1} \|x_{1:k}\|^2$, which yileds   
 \be\label{equ:hatx:bound:3} 
   \|x_{1:k}\|^2 \geq k/n. 
 \ee
Let $\widetilde x := \proj_{\Scal_k}(x)=  \begin{pmatrix} x_{1:k}^\tran &  0\end{pmatrix}^\tran/\|x_{1:k}\| \in  \mathcal{S}_k$. Then,   
 $\dist(x, \mathcal{S}_k) ^2 = \|x - \widetilde x\|^2 =   2 (1 - \|x_{1:k}\|)  
 =  2(1 - \|x_{1:k}\|^2)/(1 + \|x_{1:k}\|)$,
which, together with \eqref{equ:hatx:bound:2} and \eqref{equ:hatx:bound:3}, yields the desired error bound \eqref{equ:error:bound}. 
\end{proof}
\begin{lemma}\label{equ:theorem:equiv:sparse:knorm:kkt}
Let $\Mcal = \Scal$ in problem \eqref{prob:sphere:knorm}, and let  $\bar x \in \Scal$ be a Riemannian critical point of problem \eqref{prob:sphere:knorm}.  If $\gamma >  n L_f^0/k$, then  $\bar x$ is $k$-sparse, i.e., $\|\bar x\|_1 - \normmm{\bar x}_k = 0$.
\end{lemma}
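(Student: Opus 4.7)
Plan: I would argue by contradiction, mimicking the strategy of Lemma \ref{theorem:equiv:sparse:l0:regularized:00}. Suppose $\bar x$ is not $k$-sparse, i.e., $\|\bar x\|_0 \geq k+1$. By Lemma \ref{lemma:critical} and the projection-based characterization \eqref{equ:partial:h:R} of the Riemannian subdifferential on the sphere, there exist $\tilde \xi^h \in \gamma\,\partial \|\bar x\|_1$ and $\tilde \xi^g \in \gamma\,\partial \normmm{\bar x}_k$ such that, with $\tilde \xi := \tilde \xi^h - \tilde \xi^g$,
\[
\grad f(\bar x) + \tilde \xi - \iprod{\bar x}{\tilde \xi}\bar x = 0.
\]
Let $I^* \subset \{1,\ldots,n\}$ be a top-$k$ index set compatible with $\tilde \xi^g$, in the sense that $\tilde \xi^g_i = \gamma \sgn(\bar x_i)$ for $i \in I^*$ and $\tilde \xi^g_i = 0$ for $i \notin I^*$, and put $J := \{j \notin I^* : \bar x_j \neq 0\}$. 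By the contradiction hypothesis $|J| \geq 1$; moreover $\tilde \xi_j = \gamma \sgn(\bar x_j)$ for $j \in J$, $\tilde \xi_i = 0$ for $i \in I^*$, so $\iprod{\bar x}{\tilde \xi} = \gamma \sum_{j \in J}|\bar x_j|$.

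Next I would test the criticality equation against the sign vector $w := \sgn(\bar x) \odot \mathbf{1}_J$. Using $\iprod{w}{\tilde \xi} = \gamma|J|$ and $\iprod{w}{\bar x} = \sum_{j \in J}|\bar x_j| = \iprod{\bar x}{\tilde \xi}/\gamma$, this gives the scalar identity $\iprod{w}{\grad f(\bar x)} + \gamma|J| - \iprod{\bar x}{\tilde \xi}^2/\gamma = 0$. A Cauchy-Schwarz inequality bounds $\iprod{\bar x}{\tilde \xi}^2/\gamma \leq \gamma|J|\,\|\bar x_J\|^2$, and the top-$k$ mass bound $\|\bar x_{I^*}\|^2 \geq k/n$ (whose derivation mirrors that of \eqref{equ:hatx:bound:3}) implies $\|\bar x_J\|^2 \leq \|\bar x_{I^{*c}}\|^2 \leq 1 - k/n = (n-k)/n$. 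Together with $|\iprod{w}{\grad f(\bar x)}| \leq \|w\|\,\|\grad f(\bar x)\| \leq \sqrt{|J|}\,L_f^0$, these estimates yield $\gamma|J|\,k/n \leq L_f^0 \sqrt{|J|}$, and hence $\gamma \leq nL_f^0/(k\sqrt{|J|}) \leq nL_f^0/k$, contradicting the assumption $\gamma > nL_f^0/k$.

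The principal difficulty I anticipate is justifying the existence of a top-$k$ index set $I^*$ compatible with $\tilde \xi^g$ when there are ties at the $k$-th largest absolute value: in that case $\partial \normmm{\bar x}_k$ is a convex hull of vertex subgradients, so $\tilde \xi^g$ admits a representation of the form $\gamma \alpha_j \sgn(\bar x_j)$ with $\alpha_j \in [0,1]$ and $\sum_j \alpha_j = k$, and the clean dichotomy $\tilde \xi^g_i \in \{0,\gamma\sgn(\bar x_i)\}$ breaks componentwise. The positive-homogeneity identity $\iprod{\bar x}{\tilde \xi^g} = \normmm{\bar x}_k$ continues to hold, so $\iprod{\bar x}{\tilde \xi} = \gamma(\|\bar x\|_1 - \normmm{\bar x}_k)$ is unaffected; I would absorb the fractional contributions into a refined Cauchy-Schwarz estimate, selecting $I^*$ to contain the strictly-top indices together with those on which $\alpha_j = 1$ and then arguing along the same lines. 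Once this bookkeeping is done, the bound $\gamma \leq nL_f^0/k$ follows as before.
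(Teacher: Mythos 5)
Your argument takes a genuinely different route from the paper's. The paper sorts the entries, lets $j^*$ be the largest index with $\bar x_{(j^*)} \neq 0$ (necessarily $j^* \geq k+1$), and reads off the single $j^*$-th coordinate of the criticality equation, using $|\widetilde \xi_{(j^*)}| = 1$, the estimate $|\iprods{\bar x}{\widetilde \xi}\,\bar x_{(j^*)}| \leq \sum_{i=k+1}^{j^*}|\bar x_{(i)}|^2 \leq 1 - k/n$, and $\|\grad f(\bar x)\| \leq L_f^0$ to force $L_f^0 \geq \gamma k/n$. You instead pair the equation with the aggregate sign vector $w = \sgn(\bar x)\odot \mathbf{1}_J$ and apply Cauchy--Schwarz twice. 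In the tie-free case your computation is correct: the scalar identity, the bound $\|\bar x_J\|^2 \leq 1 - k/n$ via the top-$k$ mass estimate \eqref{equ:hatx:bound:3}, and the final chain $\gamma |J| k/n \leq \sqrt{|J|}\,L_f^0$ all check out, and you even obtain the marginally sharper intermediate bound $\gamma \leq nL_f^0/(k\sqrt{|J|})$. The paper's single-component test is more elementary and mirrors its proof of Lemma~\ref{theorem:equiv:sparse:l0:regularized:00}; yours is a clean alternative of comparable difficulty.

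The genuine gap is the tie case, which you rightly flag as the principal difficulty but then dismiss as bookkeeping; it is not. Consider $\bar x$ with $p > k$ nonzero entries all equal to $p^{-1/2}$, and take the barycentric subgradient $\tilde\xi^g = \gamma (k/p)\sgn(\bar x)$, which lies in $\gamma\,\partial \normmm{\bar x}_k$ since all $\binom{p}{k}$ top-$k$ index sets are active. Then $\tilde\xi = \gamma(1-k/p)\sgn(\bar x)$ is parallel to $\bar x$, its tangential projection vanishes, and the criticality equation degenerates to $\grad f(\bar x) = 0$. Every test vector of the kind you propose --- a reweighting of $\sgn(\bar x)$ supported on the non-top indices, which here is the entire support --- is again parallel to $\bar x$ and yields the identity $0 = 0$; no refinement of Cauchy--Schwarz can recover a contradiction, because with $f \equiv 0$ (so $L_f^0 = 0$ and the hypothesis $\gamma > nL_f^0/k$ holds vacuously) this configuration genuinely satisfies the criticality condition of Lemma~\ref{lemma:critical} for every $\gamma > 0$ while failing to be $k$-sparse. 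So the dichotomy $\tilde\xi^g_i \in \{0, \gamma\sgn(\bar x_i)\}$ is not a convenience you can dispense with: the argument requires a vertex element of $\partial \normmm{\bar x}_k$, and your closing claim that the fractional case ``follows as before'' is unsubstantiated and, as written, false. For what it is worth, the paper's own proof makes the same implicit vertex assumption when it asserts $|\widetilde\xi_i| = 1$ for $k+1 \leq i \leq j^*$, so your tie-free argument matches the generality the paper actually achieves --- but a complete proof would have to either restrict the class of subgradients or strengthen the stationarity notion, not merely refine the estimates.
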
 
\begin{proof}
Without loss of generality,  assume $|\bar x_{(1)}| \geq |\bar x_{(2)}| \geq \cdots \geq  |\bar x_{(n)}|$.
By the optimality condition as shown in Lemma \ref{lemma:critical}, we have $0 = \grad f(\bar x) + \gamma \widetilde \xi - \gamma \iprods{\bar x}{\widetilde \xi} \bar x$ for some $\widetilde \xi \in \partial \|\bar x\|_1 - \partial \normmm{\bar x}_k$. Suppose for contradiction that  $\|\bar x\|_1 - \normmm{\bar x}_k > 0$. Then, there exists an index $j \geq k + 1$ such that  $ |\bar x_{(j)}| > 0$. Let  $j^*$ be the largest such index. By definition, $\bar x_{i} = 0$ for  all $i \geq j^* + 1$ and $|\bar x_{i}|^2 \geq 1/j^*$ for $1 \leq i \leq k$. Consequently, $\sum_{i = k+1}^{j^*} |\bar x_{(i)}|^2 \leq 1 - k/j^* $. Also, note that  $\widetilde \xi_{i} = 0$ for  $1\leq i \leq k$, $|\widetilde \xi_i| = 1$ for  $k+1 \leq i \leq j^*.$ Hence,    
$|\iprods{\bar x}{\widetilde \xi} \bar x_{(j^*)}| \leq \sum_{i = k+1}^{j^*} |\bar x_{(i)}|^2  \leq 1-k/j^* \leq  1 - k/n$. Evaluating the $j^*$-th component of the optimality condition and using \eqref{equ:lemma:dc:equiv:1:02} yields
\[
L_f^0 \geq |\grad f(\bar x)_{(j^*)}| = \gamma | \widetilde \xi_{(j^*)} -  \iprods{\bar x}{\widetilde \xi} \bar x_{(j^*)}| \geq  \gamma (1 -  | \iprods{\bar x}{\widetilde \xi} \bar x_{(j^*)}|) \geq \gamma k/n. 
\]
This contradicts  $\gamma > nL_f^0/k$, and thus  $\bar{x}$ must be $k$-sparse.
\end{proof} 
\begin{theorem}\label{theorem:equiv:sparse:knorm}
Let $\Mcal = \Scal$ in problems \eqref{prob:sphere:ksparse} and \eqref{prob:sphere:knorm}. If  $\gamma >  n L_f^0/k$, then the two problems share the same set of global minimizers.  Moreover, any local minimizer of problem \eqref{prob:sphere:knorm}  is also a local minimizer of problem \eqref{prob:sphere:ksparse}. 
\end{theorem}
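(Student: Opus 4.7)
The plan is to closely mirror the proof of Theorem~\ref{theorem:equiv:sparse:l0:regularized}, with Lemma~\ref{equ:theorem:equiv:sparse:knorm:kkt} (the $k$-sparsity of Riemannian critical points of \eqref{prob:sphere:knorm}) playing the role that Lemma~\ref{theorem:equiv:sparse:l0:regularized:00} played there. The only observation needed to glue things together is the elementary identity $\|x\|_1 - \normmm{x}_k = 0 \Longleftrightarrow \|x\|_0 \leq k$, which holds because $\normmm{x}_k$ sums the $k$ largest magnitudes of $x$.

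For the global minimizer claim, let $x^\star$ be a global minimizer of \eqref{prob:sphere:ksparse} (so $\|x^\star\|_1 - \normmm{x^\star}_k = 0$ since $x^\star \in \Scal_k$) and let $x_\gamma^\star \in \Scal$ be a global minimizer of \eqref{prob:sphere:knorm}. Since $x_\gamma^\star$ is in particular a Riemannian critical point by Lemma~\ref{lemma:critical}, the assumption $\gamma > nL_f^0/k$ together with Lemma~\ref{equ:theorem:equiv:sparse:knorm:kkt} gives $\|x_\gamma^\star\|_1 - \normmm{x_\gamma^\star}_k = 0$, so $x_\gamma^\star$ is feasible for \eqref{prob:sphere:ksparse}. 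Combining the optimality of $x_\gamma^\star$ for the penalized problem with the optimality of $x^\star$ for the constrained problem then yields
\[
f(x_\gamma^\star) = f(x_\gamma^\star) + \gamma\bigl(\|x_\gamma^\star\|_1 - \normmm{x_\gamma^\star}_k\bigr) \leq f(x^\star) + \gamma\bigl(\|x^\star\|_1 - \normmm{x^\star}_k\bigr) = f(x^\star) \leq f(x_\gamma^\star),
\]
so equality holds throughout and both points are optimal for each problem.

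For the local claim, let $\tilde x$ be a local minimizer of \eqref{prob:sphere:knorm}, so there exists a neighborhood $\mathcal{N}$ of $\tilde x$ with
\[
f(\tilde x) + \gamma\bigl(\|\tilde x\|_1 - \normmm{\tilde x}_k\bigr) \leq f(x) + \gamma\bigl(\|x\|_1 - \normmm{x}_k\bigr) \quad \forall\, x \in \mathcal{N} \cap \Scal.
\]
Applying Lemmas~\ref{lemma:critical} and~\ref{equ:theorem:equiv:sparse:knorm:kkt} to $\tilde x$ gives $\|\tilde x\|_1 - \normmm{\tilde x}_k = 0$, hence $\tilde x \in \Scal_k$. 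Restricting the displayed inequality to $x \in \mathcal{N} \cap \Scal_k$, where the penalty term also vanishes, yields $f(\tilde x) \leq f(x)$, which is exactly the local optimality of $\tilde x$ for \eqref{prob:sphere:ksparse}. No real obstacle arises here: the sparsity-of-critical-points lemma does all of the heavy lifting, and the remainder is a short penalty-function argument directly parallel to the one used in the proof of Theorem~\ref{theorem:equiv:sparse:l0:regularized}.
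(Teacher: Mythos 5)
Your proof is correct, but it follows a genuinely different route from the paper's for the global-minimizer claim. The paper proves that claim via exact-penalty theory: it invokes the local error bound of Lemma~\ref{lemma:error:bound:sparse:knorm} together with cited results from the exact-penalization literature, noting that $\gamma > nL_f^0/k$ implies $\gamma$ exceeds the (smaller) threshold $L_f^0\sqrt{2}(1+\sqrt{k/n})^{-1/2}$ dictated by the error bound. You instead bypass the error bound entirely and run the same sandwich argument used for Theorem~\ref{theorem:equiv:sparse:l0:regularized}: Lemma~\ref{equ:theorem:equiv:sparse:knorm:kkt} makes every global minimizer of \eqref{prob:sphere:knorm} feasible for \eqref{prob:sphere:ksparse}, the penalty vanishes on both comparison points, and the two optimality relations force equality throughout. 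Your version is more self-contained (no external citations) and unifies the two claims under the single critical-point sparsity lemma; the paper's version buys a stronger statement in principle, since the error-bound route establishes global equivalence under the weaker threshold on $\gamma$ and the error bound itself is of independent interest. For the local claim both arguments start identically (criticality plus Lemma~\ref{equ:theorem:equiv:sparse:knorm:kkt} give feasibility of $\tilde x$), but where the paper cites an external lemma to conclude, you give the direct one-line restriction of the local-minimality inequality to $\mathcal{N}\cap\mathcal{S}_k$, on which the penalty vanishes; this is exactly the elementary argument the paper uses in Theorem~\ref{theorem:equiv:sparse:l0:regularized} and is perfectly valid here.
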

\begin{proof}
By noting that $n/k >  \sqrt{2}(1 + \sqrt{k/n})^{-1/2}$, the first claim follow directly from the error bound in Lemma~\ref{lemma:error:bound:sparse:knorm}, along with \cite[Lemmas 5\,\&\,9]{liu2024extreme} and \cite[Proposition~9.1.2]{cui2021modern}.
For the second claim, Lemmas \ref{lemma:critical} and \ref{equ:theorem:equiv:sparse:knorm:kkt} imply that any local minimizer of problem \eqref{prob:sphere:knorm} is feasible for problem \eqref{prob:sphere:ksparse} when $\gamma > n L_f^0 / k$. Applying \cite[Lemma~9]{liu2024extreme}, such a point is also a local minimizer of problem \eqref{prob:sphere:ksparse}.
\end{proof}

 Some remarks are in order. First, although the equivalence results in Theorem \ref{theorem:equiv:sparse:l0:regularized} and \ref{theorem:equiv:sparse:knorm} are established specifically on the sphere, they represent the first such results in the context of Riemannian DC optimization. Extending them to general manifolds remains an open question. Second, the error bound  \eqref{equ:error:bound} is of independent interest as it directly characterizes the $\ell_0$-constrained manifold set; see \cite{jiang2023exact,liu2024extreme,liu2024extreme2,chen2025tight} for recent developments.  Finally, the bound in \eqref{equ:error:bound} is tight. For instance, when  $n = 2$ and $k = 1$, and $x = (\sqrt{1/2}, \sqrt{1/2})^\tran$, we have $(1, 0)^\tran \in \proj_{\mathcal{S}_k}(x)$ and $\dist(x, \mathcal{S}_k)=  \sqrt{2 - \sqrt{2}} = \sqrt{2}(1 + \sqrt{k/n})^{-1/2}\left(\|x\|_1 - \normmm{x}_k \right)$, which exactly attains the bound.
%%%%%%%%%%%%%%%%%%%%%%%%%%%%%%%%%%%%%%%%%%%%%%%%%%%%%%%%%%%%

%%%%%%%%%%%%%%%%%%%%%%%%%%%%%%%%%%%%%%%%%%%%%%%%%%%%%%%%%%%%
\section{An iRPDC algorithmic framework}\label{section:RDCA}
In this section, we first present the Riemannian proximal DC algorithm (RPDCA) in Section~\ref{subsection:RPDCA}. Building on this, Section~\ref{subsection:iRPDCA:framework} introduces the proposed iRPDC algorithmic framework and establishes its iteration complexity for achieving an $\epsilon$-Riemannian critical point of problem~\eqref{equ:prob:dc}.
\subsection{RPDCA}\label{subsection:RPDCA}
For any $x \in \Mcal$, let $\widetilde \xi_x \in \partial g(x)$ be a subgradient, and define 
\be \label{equ:shorthand:0}
\xi_x = \proj_{\tangent_x \Mcal}(\widetilde \xi_x), ~  p_x = \rgrad f(x) - \xi_x,  ~  L_x :=  2 \iota_2 \big(  \|\nabla f(x) - \widetilde \xi_x\| + {L_h^0} \big) + \iota_1^2 L_f.
\ee
For notational simplicity, we define the following quantities at the iterate $x_j \in \Mcal$:
 \begin{equation}\label{equ:shorthand}
 \widetilde \xi_j: = \widetilde \xi_{x_j}, \quad \xi_j:=\xi_{x_j}, \quad p_j:=p_{x_j}, \quad L_j := L_{x_j}.
 \end{equation}

We begin with a key majorization result for the pullback $F  \circ \Retr_x: \tangent_x \Mcal \to \Rbb$, a concept introduced in \cite{boumal2019global}. This result plays a central role in our framework. 
\begin{lemma}\label{lemma:majorization}
Suppose that Assumptions \ref{assumption:f:h:g} and \ref{assumption:retraction} hold. Then, for any $x \in \Mcal$ and $\eta \in \tangent_x \Mcal$, we have  
\be \label{equ:F:descent:general:00} 
\begin{aligned} 
 F(\Retr_x(\eta))  \leq \iprod{p_x}{\eta} + \frac{L_x}{2}\|\eta\|^2 +  h(x + \eta)  + F(x) - h(x), 
\end{aligned}
\ee
where $L_x $ defined in \eqref{equ:shorthand:0} satisfies the uniform bound 
\be \label{equ:Lx:bound} 
L_x \leq L: = 2 \iota_2 (L_f^0  +  {L_g^0} + {L_h^0}) + \iota_1^2 L_f, \quad \forall\,x \in \Mcal. 
\ee 
\end{lemma}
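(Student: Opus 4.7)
The plan is to set $y = \Retr_x(\eta)$, bound each of the three components of $F = f + h - g$ separately on $y$ in terms of $\eta$, and then recombine.

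First I would use the Euclidean descent inequality \eqref{equ:f:descent:euclidean} for $f$ together with the subgradient inequality for $-g$ (valid since $g$ is convex) at $\widetilde \xi_x \in \partial g(x)$. Adding them gives
\[
f(y) - g(y) \leq f(x) - g(x) + \iprod{\nabla f(x) - \widetilde\xi_x}{y - x} + \frac{L_f}{2}\|y-x\|^2.
\]
The key manipulation is to split $y - x = \eta + (y - x - \eta)$. Since $\eta \in \tangent_x\Mcal$, the definition \eqref{equ:rgrad} of $\rgrad f(x)$ and the relation $\xi_x = \proj_{\tangent_x\Mcal}(\widetilde\xi_x)$ give
\[
\iprod{\nabla f(x) - \widetilde\xi_x}{\eta} = \iprod{\rgrad f(x) - \xi_x}{\eta} = \iprod{p_x}{\eta}.
\]
The remaining cross term $\iprod{\nabla f(x) - \widetilde\xi_x}{y - x - \eta}$ is controlled by Cauchy--Schwarz and the second retraction bound in Assumption~\ref{assumption:retraction}, yielding an upper bound of $\iota_2\|\nabla f(x) - \widetilde\xi_x\|\,\|\eta\|^2$. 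The quadratic term $\tfrac{L_f}{2}\|y-x\|^2$ is bounded by $\tfrac{L_f \iota_1^2}{2}\|\eta\|^2$ via the first retraction bound.

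Next I would handle $h$ by writing $h(y) = h(x+\eta) + [h(y) - h(x+\eta)]$ and using the $L_h^0$-Lipschitz continuity of $h$ together with the second retraction bound to get $h(y) \leq h(x+\eta) + L_h^0 \iota_2 \|\eta\|^2$. Adding the estimates for $f(y) - g(y)$ and $h(y)$ and collecting the $\|\eta\|^2$ coefficients gives precisely
\[
\tfrac{1}{2}\bigl(2\iota_2(\|\nabla f(x) - \widetilde\xi_x\| + L_h^0) + \iota_1^2 L_f\bigr)\|\eta\|^2 = \tfrac{L_x}{2}\|\eta\|^2,
\]
which matches \eqref{equ:shorthand:0} and yields the desired inequality \eqref{equ:F:descent:general:00}.

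Finally, the uniform bound \eqref{equ:Lx:bound} follows from the Lipschitz assumptions in Assumption~\ref{assumption:f:h:g}(i)--(ii): $\|\nabla f(x)\| \leq L_f^0$ for all $x \in \Ecal$, and $\|\widetilde\xi_x\| \leq L_g^0$ for every $\widetilde\xi_x \in \partial g(x)$, so $\|\nabla f(x) - \widetilde\xi_x\| \leq L_f^0 + L_g^0$. The main subtlety, rather than an obstacle, is the decision to combine $f$ and $-g$ before applying retraction estimates: this replaces the cruder sum $\|\nabla f(x)\| + \|\widetilde\xi_x\|$ by the tighter $\|\nabla f(x) - \widetilde\xi_x\|$ inside $L_x$, which will be useful later when the subgradient is chosen adaptively to reduce the effective local Lipschitz constant.
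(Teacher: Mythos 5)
Your proposal is correct and follows essentially the same route as the paper's proof: both bound $f$, $h$, and $-g$ using the descent inequality, Lipschitz continuity, and the subgradient inequality respectively, split $\Retr_x(\eta)-x$ into $\eta$ plus the second-order retraction error, and crucially combine the $f$ and $g$ linear terms into the single difference $\|\nabla f(x)-\widetilde\xi_x\|$ before applying Cauchy--Schwarz, exactly as the paper does. The only difference is cosmetic (you merge the $f$ and $-g$ estimates before invoking the retraction bounds, whereas the paper combines all three at the end), so nothing further is needed.
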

\begin{proof}
We first bound $f(\Retr_x(\eta))$. From \eqref{equ:f:descent:euclidean}  and \eqref{equ:rgrad}, we have
\begin{multline}\label{equ:f:descent}
 f(\Retr_x(\eta)) 
\leq f(x)  +   \iprod{\nabla f(x)}{\Retr_x(\eta) - x - \eta} \\
 +   \iprod{\rgrad f(x)}{\eta}  + \frac{L_f}{2} \|\Retr_x(\eta) - x\|^2.  
\end{multline}
For $h(\Retr_x(\eta))$, since $h$ is convex and ${L_h^0}$-Lipschitz, 
  we have 
\be
\label{equ:h:descent}
\begin{aligned}
h(\Retr_x(\eta)) \leq{}&  h(x + \eta) + {L_h^0} \|\Retr_x(\eta) - x - \eta\|. 
\end{aligned}
\ee    
Next,  since $\xi_{x} = \proj_{\tangent_x \Mcal}(\widetilde \xi_x)$ as given in \eqref{equ:shorthand:0}, it holds that  $\iprods{\widetilde \xi_x}{\eta} = \iprod{\xi_x}{\eta}$ for any $\eta \in \tangent_x \Mcal$.  By the convexity of $g(\cdot)$ and the inclusion \( \widetilde \xi_x \in \partial g(x) \), we obtain
\be 
\begin{aligned}
\label{equ:g:descent}
 g(\Retr_x(\eta))   
\geq g(x)  +    \iprods{\widetilde \xi_x}{\Retr_x(\eta) - x - \eta} +    \iprods{ \xi_x}{\eta}. 
\end{aligned}
\ee
 Combining \eqref{equ:f:descent}, \eqref{equ:h:descent},  \eqref{equ:g:descent}, and using the definitions of $p_x$ and $L_x$ in  \eqref{equ:shorthand:0} and the property \eqref{equ:retraction:bound}, we 
 obtain  the desired  \eqref{equ:F:descent:general:00}.
 
 Noting that  $\iprods{\nabla f(x) - \widetilde \xi_x}{\Retr_x(\eta) - x - \eta} \leq \|\nabla f(x) - \widetilde \xi_x\| \cdot \|\Retr_x(\eta) - x - \eta \|.$  In addition, since $f$ and $g$ are $L_f^0$- and $L_g^0$-Lipschitz continuous, it follows directly that \eqref{equ:Lx:bound} holds. The proof is complete. 
 \end{proof}

The inequality \eqref{equ:F:descent:general:00} forms the foundation for designing RPDCA.    
Specifically, at  the iterate \(x_j \in \Mcal\),  we  choose $\ell_j$ as an estimate of $L_j$,  since the parameters $\iota_1$, $\iota_2$, $L_h^0$, and $L_f$ may be unavailable or overestimated in practice. We require that  
\be \label{equ:Lk}
L_{\min} \leq \ell_j \leq L_{\max},
\ee 
 where $L_{\max} \geq L_{\min} > 0$ are prescribed constants. To update $x_{j + 1}$, we solve the subproblem 
\begin{align} \label{equ:etak}
 \min_{\eta \in \tangent_{x_j} \Mcal}\, \left\{ q_j(\eta):= \iprod{p_j}{\eta} + \frac{\ell_j}{2}\|\eta\|^2 +  h(x_j + \eta)\right\}
\end{align} 
to obtain the search direction $\eta^\star_j:= \argmin_{\eta \in \tangent_{x_j} \Mcal}\, q_j(\eta)$.  
By the optimality of $\eta_j^\star$, we have the  following {\it sufficient decrease property}: 
 \be \label{equ:eta:opt} 
 q_j(\eta_j^\star) \leq q_j(0)   -  \frac{\ell_j}{2} \|\eta_j^\star\|^2.
 \ee
Then, similar to Lemma~\ref{lemma:F:descent:inexact}, we can obtain the descent estimate   
 \[
 F\!\left(\Retr_{x_j}(\tau \eta^\star_j)\right) \leq  F(x_j) -  \frac{2 - L_j\ell_j^{-1}\tau}{2}  \ell_j \tau \|\eta_j^\star\|^2, \quad \forall\,\tau \in [0,1].
 \] 
By choosing a suitable stepsize $\tau_j$ (uniformly  bounded away from zero),  the RPDCA update is given by
\be\label{equ:RpDCA}
x_{j + 1} =  \Retr_{x_j}(\tau_j \eta^\star_j),
\ee
which ensures  
$F(x_{j+1}) \leq F(x_j) -  c \tau_j \ell_j \|\eta_j^\star\|^2$ for some given constant $c \in (0,1)$. 
In analogy with Theorem \ref{theorem:outer:complexity},  any limit point of the sequence $\{x_j\}$ generated by RPDCA \eqref{equ:RpDCA} is a Riemannian critical point of problem \eqref{equ:prob:dc}. {Moreover, RPDCA attains an $\epsilon$-Riemannian critical point of problem \eqref{equ:prob:dc} within $\mathcal{O}(\epsilon^{-2})$ iterations.} Notably, RPDCA reduces to  ManPG proposed by \cite{chen2020proximal} when $g(\cdot) = 0$ and $c = 1/2$. 
\subsection{The iRPDC framework and its complexity}\label{subsection:iRPDCA:framework}Since computing $\eta^\star_j$ exactly may be unnecessary or computationally expensive in practice, we introduce an \textit{inexact} Riemannian proximal DC (iRPDC) algorithmic framework. It allows an approximate solution $\eta_j \in \tangent_{x_j} \Mcal$ to \eqref{equ:etak}, while preserving key properties required for convergence analysis.  
 To compute an $\epsilon$-Riemannian critical point of problem \eqref{equ:prob:dc},  we define the accuracy  parameter
 \be \label{equ:epsilon:j}
\epsilon_j = \min\{\ell_j^{-1},1\}\epsilon. 
\ee 
Given constants $\rho \in [0,1)$ and $c \in (0,1-\rho/2)$, we then require that the direction $\eta_j$  satisfies the following inexact conditions: 
\begin{subequations}
 \label{equ:eta:inexact}
\begin{align}
&q_j(\eta_j) \leq q_j(0)  -  \frac{(1 - \rho)\ell_j}{2} \|\eta_j\|^2   + \mu_j + c \beta_1 \ell_j \epsilon_j^2,  \label{equ:eta:inexact:a} \\ 
& \|\eta_j^\star\|\leq \kappa \|\eta_j\|   + (\chi_j +  \beta_2\epsilon_j^2)^{1/2},\label{equ:eta:inexact:b}
\end{align}
where the parameters satisfy  
\begin{align}
&\kappa > 0, \quad {\beta_1 > 0},  \quad\beta_2 \geq 0,  \quad 2(\beta_1 \kappa^2 + \beta_2) < 1,\label{equ:beta1:beta2} \\
&\mu_j \geq  - \frac{\rho \ell_j}{2} \|\eta_j\|^2,  \quad\chi_j \geq 0, \quad  \sum_{t = 0}^{j} \chi_t \leq \chi,  \label{equ:chi} 
\end{align}
\end{subequations}
where $\mu \geq 0$ and $\chi >0$ are some constants.  

Intuitively, condition \eqref{equ:eta:inexact:a} ensures a \textit{controllable sufficient decrease} in the model function $q_j(\cdot)$, extending the exact case \eqref{equ:eta:opt}. Moreover, the optimality condition of subproblem~\eqref{equ:etak} implies that if $\|\eta_j^\star\|\leq \epsilon_j$, then by~\eqref{equ:epsilon:j} and~\eqref{equ:Delta}, the iterate $x_j$ is already an $\epsilon$-Riemannian critical point of problem~\eqref{equ:prob:dc}. However, since $\eta_j^\star$ is unavailable in practice, the condition $\|\eta_j^\star\|\leq \epsilon_j$ cannot be verified directly. Instead, condition~\eqref{equ:eta:inexact:b} provides a computable upper bound for $\|\eta_j^\star\|$ in terms of  the implementable quantity  $\|\eta_j\|$ and a summable error sequence, ensuring that $\|\eta_j^\star\|$ is small whenever $\|\eta_j\|$ is small. This yields a practical criterion for approximate stationarity and supports the convergence analysis. Clearly, the exact solution $\eta_j^\star$ satisfies \eqref{equ:eta:inexact} trivially with $\kappa=1$, $\rho=\beta_1=\beta_2=0$, and $\chi_j\equiv \mu_j\equiv 0$. Practical strategies for computing such $\eta_j$ will be given in Section~\ref{section:complexity}.

The following lemma establishes a controlled descent property for $F(\cdot)$.   
\begin{lemma}\label{lemma:F:descent:inexact}
Let $\eta_j$ satisfy condition \eqref{equ:eta:inexact}. Then, for any $\tau \in [0,1]$,  
\be\label{equ:F:descent:inexact}
 F(\Retr_{x_j}(\tau \eta_j)) \leq F(x_j)  -\frac{2- \rho - L_j\ell_j^{-1} \tau}{2} \ell_j \tau \|\eta_j\|^2  + \tau (\mu_j + c \beta_1 \ell_j \varepsilon_j^2). 
 \ee
\end{lemma}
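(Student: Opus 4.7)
The plan is to combine the majorization inequality from Lemma~\ref{lemma:majorization} with the inexactness condition \eqref{equ:eta:inexact:a}, using the convexity of $h$ to handle the stepsize $\tau$ properly.

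First, I would apply Lemma~\ref{lemma:majorization} at $x = x_j$ with the scaled direction $\tau \eta_j \in \tangent_{x_j}\Mcal$, which gives
\[
F(\Retr_{x_j}(\tau \eta_j)) \leq F(x_j) + \tau \iprod{p_j}{\eta_j} + \frac{L_j \tau^2}{2}\|\eta_j\|^2 + h(x_j + \tau \eta_j) - h(x_j).
\]
Next, since $h$ is convex and $x_j + \tau\eta_j = (1-\tau) x_j + \tau(x_j+\eta_j)$ for $\tau \in [0,1]$, Jensen's inequality yields
\[
h(x_j + \tau \eta_j) - h(x_j) \leq \tau \bigl(h(x_j + \eta_j) - h(x_j)\bigr),
\]
so the right-hand side above is bounded by
\[
F(x_j) + \tau\bigl(\iprod{p_j}{\eta_j} + h(x_j + \eta_j) - h(x_j)\bigr) + \frac{L_j \tau^2}{2}\|\eta_j\|^2.
\]

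Now I would invoke the inexact sufficient decrease condition~\eqref{equ:eta:inexact:a}. Recalling that $q_j(0) = h(x_j)$ and $q_j(\eta_j) = \iprod{p_j}{\eta_j} + \tfrac{\ell_j}{2}\|\eta_j\|^2 + h(x_j + \eta_j)$, condition~\eqref{equ:eta:inexact:a} rearranges to
\[
\iprod{p_j}{\eta_j} + h(x_j + \eta_j) - h(x_j) \leq -\frac{(2 - \rho)\ell_j}{2}\|\eta_j\|^2 + \mu_j + c \beta_1 \ell_j \varepsilon_j^2.
\]
Substituting this into the previous display and collecting the quadratic terms in $\|\eta_j\|^2$ gives the coefficient $-\tfrac{(2-\rho)\ell_j \tau}{2} + \tfrac{L_j \tau^2}{2} = -\tfrac{(2-\rho) - L_j \ell_j^{-1}\tau}{2}\,\ell_j \tau$, which matches the target. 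The remaining error term scales linearly as $\tau(\mu_j + c\beta_1 \ell_j \varepsilon_j^2)$, completing the derivation.

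There is no real obstacle here: the only subtle point is that the majorization contributes a $\tau^2$ term in $\|\eta_j\|^2$ whereas the inexactness bound contributes a $\tau$ term, and the algebraic merger of these two is exactly what produces the stated stepsize-dependent coefficient $(2-\rho - L_j\ell_j^{-1}\tau)/2$. Note that the bound $\chi_j$ on $\|\eta_j^\star\|$ from~\eqref{equ:eta:inexact:b} is not needed in this lemma; it will only enter the subsequent analysis of approximate stationarity.
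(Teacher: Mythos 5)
Your proposal is correct and follows essentially the same route as the paper: apply the majorization of Lemma~\ref{lemma:majorization} at the scaled direction $\tau\eta_j$, use convexity of $h$ to bound $h(x_j+\tau\eta_j)-h(x_j)$ by $\tau\bigl(h(x_j+\eta_j)-h(x_j)\bigr)$, and substitute the rearranged form of \eqref{equ:eta:inexact:a}; the paper merely packages the same algebra through the model function $q_j$ rather than expanding it termwise. The only cosmetic point is that the error term in \eqref{equ:eta:inexact:a} is written with $\epsilon_j$ while the lemma's display uses $\varepsilon_j$ --- a notational inconsistency in the paper itself, not a gap in your argument.
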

\begin{proof}
By \eqref{equ:shorthand}, \eqref{equ:F:descent:general:00}, and  \eqref{equ:etak},   for any $\tau \in [0,1]$,  we have 
\be \label{equ:F:descent:proof:a00} 
 F(\Retr_{x_j}(\tau \eta_j)) 
\leq  F(x_j)  + q_j(\tau \eta_j) - q_j(0) + \frac{L_j - \ell_j}{2} \tau^2 \|\eta_j\|^2. 
\ee
By the convexity of $h$, it holds that $h(x_j  + \tau \eta_j) \leq \tau h(x_j + \eta_j) + (1 - \tau) h(x_j)$, which, together with the definition of $q_j(\cdot)$ in \eqref{equ:etak}, gives 
\[
q_j(\tau \eta_j) \leq \tau (q_j(\eta_j) - q_j(0)) + q_j(0) + \frac{\tau^2 - \tau}{2} \ell_j \|\eta_j\|^2.
\]
Substituting this into \eqref{equ:F:descent:proof:a00} and applying \eqref{equ:eta:inexact:a} gives \eqref{equ:F:descent:inexact}. 
\end{proof}

 Once such \(\eta_j\) is obtained, we perform backtracking to ensure a controllable sufficient decrease. Given a contraction parameter $s \in (0,1)$, we select  the smallest nonnegative integer $i$ such that $\tau_j = s^i$  
satisfies 
\be  \label{equ:F:descent:abstract}
F(\Retr_{x_j}(\tau_j \eta_j))   
\leq   F(x_j) -  c  \tau_j \ell_j \|\eta_j\|^2  +   \tau_j (\mu_j + c \beta_1 \ell_j  \epsilon_j^2). 
\ee
We then set 
\be\label{equ:iRPDC:x:j+1}
x_{j+1} = \Retr_{x_j}(\tau_j \eta_j).
\ee

Since $\ell_j \geq L_{\min}$ by \eqref{equ:Lk} and $L_j \leq L$ by \eqref{equ:Lx:bound}, we have $(2- \rho - L_j\ell_j^{-1} \tau)/2 \geq c$ whenever $\tau \leq\min\{(2 - \rho - 2c)L_{\min}/L, 1\}$.  Thus,   the backtracking procedure terminates in a finite number of steps, and the resulting stepsize $\tau_j$ is uniformly bounded away from zero. These facts are formalized below. 
 \begin{lemma} \label{lemma:abstract:descent}
Suppose that Assumptions \ref{assumption:f:h:g} and \ref{assumption:retraction} hold, and that the inexactness conditions in \eqref{equ:eta:inexact} are satisfied. Let $\bar \tau := \min\{(2 - \rho - 2c)L_{\min}/L, 1\}$. Then, the backtracking index $i$ in Line 6 of Algorithm \ref{alg:rdca} satisfies 
\be \label{equ:i:max}
i \leq  i_{\max}:= \lceil\log_s \bar \tau\,\rceil \quad \text{and}\quad \tau_j \geq \min\{s \bar \tau, 1\}. 
\ee 
 Moreover, inequality~\eqref{equ:F:descent:abstract} holds for all  $j \geq 0$.   
\end{lemma}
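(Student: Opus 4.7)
The plan is to combine the pullback descent bound of \cref{lemma:F:descent:inexact} with the uniform upper bound $L_j\le L$ from \cref{lemma:majorization} and the lower bound $\ell_j\ge L_{\min}$ from \eqref{equ:Lk} in order to identify an explicit threshold stepsize below which the backtracking test \eqref{equ:F:descent:abstract} is guaranteed to succeed. The rest of the argument is a short geometric-series accounting in the backtracking index.

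First, I would evaluate \eqref{equ:F:descent:inexact} at a generic $\tau\in[0,1]$ and observe that the coefficient $(2-\rho-L_j\ell_j^{-1}\tau)/2$ is at least $c$ as soon as $L_j\ell_j^{-1}\tau \le 2-\rho-2c$. Since $L_j/\ell_j \le L/L_{\min}$, a sufficient condition is $\tau \le (2-\rho-2c)L_{\min}/L$. Combined with the constraint $\tau\le 1$, this means that every $\tau\in(0,\bar\tau]$ makes the descent inequality \eqref{equ:F:descent:abstract} hold at $x_j$. Note that $\bar\tau>0$ because the assumption $c<1-\rho/2$ guarantees $2-\rho-2c>0$; in particular the $\mu_j$-term is absorbed exactly between \eqref{equ:F:descent:inexact} and \eqref{equ:F:descent:abstract}, so no extra work is required on that component.

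Next I would translate this threshold into a bound on the backtracking index. Since the test in Line 6 is evaluated at $\tau=s^i$ and is guaranteed to hold once $s^i\le\bar\tau$, the procedure terminates at the smallest $i$ satisfying $s^i\le\bar\tau$, which is at most $\lceil\log_s\bar\tau\rceil = i_{\max}$. The bound $i_{\max}\ge 0$ is automatic from $\bar\tau\in(0,1]$ and $s\in(0,1)$ (both logarithms are nonpositive, so their ratio is nonnegative). This establishes both finite termination and the stated bound $i\le i_{\max}$, which in turn proves that \eqref{equ:F:descent:abstract} holds for every $j\ge 0$.

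Finally, for the explicit lower bound on $\tau_j$, I would split into two cases according to whether $\bar\tau$ attains the value $1$ or not. If $\bar\tau=1$, then $\tau=1$ already satisfies the test, so $i=0$ and $\tau_j=1\ge\min\{s\bar\tau,1\}$. If instead $\bar\tau<1$, the minimality of $i$ forces $s^{i-1}>\bar\tau$, hence $\tau_j=s\cdot s^{i-1}>s\bar\tau$, and again $\tau_j\ge\min\{s\bar\tau,1\}$. None of the steps are delicate; the only point that requires some care is this boundary split when lower-bounding $\tau_j$, which is really just bookkeeping.
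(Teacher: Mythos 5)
Your proposal is correct and follows essentially the same route as the paper, which states this lemma without a formal proof and relies on exactly the observation you make: combining the descent estimate of Lemma~\ref{lemma:F:descent:inexact} with $L_j \le L$ and $\ell_j \ge L_{\min}$ shows that $(2-\rho-L_j\ell_j^{-1}\tau)/2 \ge c$ for all $\tau \le \bar\tau$, after which the index bound and the lower bound on $\tau_j$ are routine bookkeeping. The only cosmetic remark is that your final case split is more naturally phrased on $i=0$ versus $i\ge 1$ rather than on $\bar\tau=1$ versus $\bar\tau<1$, but the conclusion holds in all cases either way.
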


\DecMargin{8pt}   
\begin{algorithm2e}[!t]
\DontPrintSemicolon
\KwIn{$\epsilon > 0$, $x_0 \in \Mcal$, $\rho \in [0, 1)$, $c \in (0, 1- \rho/2)$, $s \in (0,1), \beta_1 > 0$, $\beta_2\in [0, 1/2 - \beta_1 \kappa^2)$, $0 < L_{\min} \leq L_{\max}$.}
\For{$j = 0, 1, \dots$}{
     Choose $\ell_j \in [L_{\min}, L_{\max}]$ and select $\mu$ satisfying \eqref{equ:muj}.\;
    Solve the subproblem \eqref{equ:etak} inexactly to obtain $\eta_j \in \tangent_{x_j} \Mcal$ satisfying \eqref{equ:eta:inexact}.\; 
   \textbf{if} $\kappa \|\eta_j\|  +  (\chi_j +  \beta_2\epsilon_j^2)^{1/2} \leq \epsilon_j$\, \textbf{then}~return~$x_j$.\;
   \For{$i = 0,1,\ldots$}{
    Set $\tau_j = s^i$ and update $x_{j+1} = \Retr_{x_j}(\tau_j\,\eta_j)$.\;
     \textbf{if} \eqref{equ:F:descent:abstract} holds \textbf{then}~break.
      }
}
\caption{An iRPDC algorithmic framework for solving problem \eqref{equ:prob:dc}}\label{alg:rdca}
\end{algorithm2e}

 The complete iRPDC algorithmic framework is summarized in  Algorithm \ref{alg:rdca}.     To guarantee convergence, we further assume that the sequence $\{\mu_j\}$ satisfies
 \be \label{equ:muj}
\sum_{t = 0}^{j} \tau_t \mu_t \leq   \mu, \quad \forall\,j \geq 0.
\ee
Practical strategies for constructing such  $\{\mu_j\}$  will be discussed in Section \ref{section:complexity}. 

We now present our main convergence and iteration complexity results. \begin{theorem} \label{theorem:outer:complexity}
 Suppose that Assumptions \ref{assumption:f:h:g} and \ref{assumption:retraction} hold, and that the inexactness conditions in \eqref{equ:eta:inexact} and \eqref{equ:muj} are satisfied.
Let $\{x_j\}$ be the sequence generated by  Algorithm \ref{alg:rdca}. If $\epsilon> 0$, then Algorithm  \ref{alg:rdca}  terminates within $\mathcal{O}(\epsilon^{-2})$ iterations and returns an $\epsilon$-Riemannian critical point of problem \eqref{equ:prob:dc}. 
\end{theorem}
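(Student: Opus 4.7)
The plan is to combine a cumulative descent estimate with a quantitative consequence of non-termination, and then verify that the output of Line 4 indeed satisfies Definition~\ref{def:epsilon:critical}.

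First, I would telescope the per-iteration descent \eqref{equ:F:descent:abstract} guaranteed by Lemma~\ref{lemma:abstract:descent}. Summing over $j = 0, 1, \ldots, J-1$ and rearranging gives
\begin{equation*}
c \sum_{j=0}^{J-1} \tau_j \ell_j \bigl(\|\eta_j\|^2 - \beta_1 \epsilon_j^2\bigr) \leq F(x_0) - F(x_J) + \sum_{j=0}^{J-1} \tau_j \mu_j \leq F(x_0) - F_{\min} + \mu,
\end{equation*}
where $F_{\min} := \inf F$ is finite because Assumption~\ref{assumption:f:h:g}(iii) (compact level set) together with the bounded perturbation introduced in finitely many iterations keeps all iterates in a compact sublevel set, and \eqref{equ:muj} bounds the $\mu_j$-sum.

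Second, I would convert non-termination at iteration $j$ into a pointwise lower bound on $\|\eta_j\|^2$. If Line~4 does not trigger, then $\kappa \|\eta_j\| + (\chi_j + \beta_2 \epsilon_j^2)^{1/2} > \epsilon_j$; squaring and using $(a+b)^2 \leq 2a^2 + 2b^2$ yields
\begin{equation*}
\|\eta_j\|^2 \;>\; \frac{(1 - 2\beta_2)\epsilon_j^2 - 2\chi_j}{2\kappa^2}.
\end{equation*}
Substituting this into the telescoped estimate, the coefficient of $\epsilon_j^2$ becomes $\frac{1 - 2\beta_2 - 2\beta_1 \kappa^2}{2\kappa^2}$, which is strictly positive by \eqref{equ:beta1:beta2}. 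Exploiting the key algebraic fact that $\ell_j \epsilon_j^2 = \ell_j \min\{\ell_j^{-2},1\}\epsilon^2 \in [C_\epsilon \epsilon^2, \epsilon^2]$ for the constant $C_\epsilon := \min\{L_{\min}, L_{\max}^{-1}\} > 0$, together with the lower bound $\tau_j \geq \min\{s\bar\tau, 1\}$ from Lemma~\ref{lemma:abstract:descent} and the summability $\sum_j \chi_j \leq \chi$ from \eqref{equ:chi}, I would obtain
\begin{equation*}
\frac{c(1 - 2\beta_2 - 2\beta_1 \kappa^2)}{2\kappa^2} \min\{s\bar\tau,1\}\, C_\epsilon\, J \epsilon^2 \;\leq\; F(x_0) - F_{\min} + \mu + \frac{c L_{\max}\chi}{\kappa^2},
\end{equation*}
which forces $J = \mathcal{O}(\epsilon^{-2})$.

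Finally, I would verify the stationarity of the output. At termination, the test in Line~4 together with \eqref{equ:eta:inexact:b} yields $\|\eta_j^\star\| \leq \epsilon_j \leq \epsilon$. The optimality condition of the convex subproblem \eqref{equ:etak} on the tangent space produces $\xi_h \in \partial h(x_j + \eta_j^\star)$ with $p_j + \ell_j \eta_j^\star + \proj_{\tangent_{x_j}\Mcal}(\xi_h) = 0$. Setting $y = x_j + \eta_j^\star \in \Ecal$, the identity $p_j = \rgrad f(x_j) - \xi_j$ together with \eqref{equ:partial:h:R} gives
\begin{equation*}
-\ell_j \eta_j^\star \;\in\; \rgrad f(x_j) + \partial_{\Rcal} h(y) - \partial_{\Rcal} g(x_j),
\end{equation*}
and the bounds $\|y - x_j\| = \|\eta_j^\star\| \leq \epsilon$ and $\|\ell_j \eta_j^\star\| \leq \ell_j \epsilon_j = \min\{1,\ell_j\}\epsilon \leq \epsilon$ certify that $x_j$ is an $\epsilon$-Riemannian critical point of \eqref{equ:prob:dc} in the sense of Definition~\ref{def:epsilon:critical}.

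The main obstacle is Step~2--Step~3: the accuracy parameter $\epsilon_j$ itself depends on $\ell_j$ (through \eqref{equ:epsilon:j}), so the comparison between $\|\eta_j\|^2$ and the tolerance must be carried out in a way that is uniform in $\ell_j \in [L_{\min}, L_{\max}]$. The trick is to work with the product $\tau_j \ell_j \epsilon_j^2$, which is sandwiched between two positive multiples of $\epsilon^2$, and to package the $\|\eta_j\|^2$--$\beta_1 \epsilon_j^2$ combination together so that the structural assumption $2(\beta_1 \kappa^2 + \beta_2) < 1$ converts the non-termination lower bound into a usable positive coefficient; the summable remainder involving $\chi_j$ is absorbed via \eqref{equ:chi}.
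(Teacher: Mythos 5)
Your proposal is correct and follows essentially the same route as the paper's proof: both telescope the linesearch descent \eqref{equ:F:descent:abstract} with the $\mu$-bound \eqref{equ:muj}, convert the non-termination test $\kappa\|\eta_j\| + (\chi_j+\beta_2\epsilon_j^2)^{1/2} > \epsilon_j$ via $(a+b)^2 \le 2(a^2+b^2)$ into a lower bound controlled by $2(\beta_1\kappa^2+\beta_2)<1$, use $\tau_j\ell_j\epsilon_j^2 \ge \min\{L_{\max}^{-1},L_{\min}\}\min\{s\bar\tau,1\}\epsilon^2$ to force $J=\mathcal{O}(\epsilon^{-2})$, and certify stationarity from the subproblem optimality condition with $y=x_J+\eta_J^\star$. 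The only (immaterial) difference is that you substitute the non-termination bound pointwise into the telescoped sum, whereas the paper first aggregates $\sum_j\tau_j\ell_j(\kappa\|\eta_j\|+(\chi_j+\beta_2\epsilon_j^2)^{1/2})^2$ and then compares it with $\sum_j\tau_j\ell_j\epsilon_j^2$.
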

\begin{proof}
For any $J_1 \geq 0$, from \eqref{equ:iRPDC:x:j+1},  summing  \eqref{equ:F:descent:abstract} over $j = 0, 1, \ldots, J_1$ and applying the bound on $\mu_j$ from  \eqref{equ:muj} yields 
 \be \label{equ:outer:complexity:new:00} 
 \sum_{j = 0}^{J_1}  \tau_j \ell_j \|\eta_j\|^2   \leq    c^{-1}(F(x_0) -  F^\star   +  \mu) + \beta_1 \sum_{j = 0}^{J_1}  \tau_j \ell_j \epsilon_j^2,
 \ee
 where $F^\star$ denotes the optimal value of problem \eqref{equ:prob:dc}.   Using $L_{\min}\leq \ell_j\leq  L_{\max}$ and $0 < \tau_j\leq 1$, it follows from \eqref{equ:chi} that 
 \be\label{equ:outer:complexity:new:01}  
 \begin{aligned}
 \sum_{j = 0}^{J_1}  \tau_j \ell_j (\chi_j + \beta_2 \epsilon_j^2) \leq \chi L_{\max}  + \beta_2 \sum_{j = 0}^{J_1}  \tau_j \ell_j \epsilon_j^2. 
 \end{aligned}
 \ee
Multiplying \eqref{equ:outer:complexity:new:00} by $\kappa^2$ and adding \eqref{equ:outer:complexity:new:01},   
we apply the inequality $(a+b)^2 \leq 2(a^2 + b^2)$ with $a = \kappa \|\eta_j\|$ and $b =(\chi_j + \beta_2 \epsilon_j^2)^{1/2}$ to obtain
 \be \label{equ:outer:complexity:new:03}
 \begin{aligned}
 \sum_{j = 0}^{J_1}  \tau_j \ell_j\left(\kappa \|\eta_j\| + (\chi_j + \beta_2 \epsilon_j^2)^{1/2}\right)^2
 \leq  C_1 + 2(\beta_1 \kappa^2 + \beta_2) \sum_{j = 0}^{J_1}  \tau_j \ell_j  \epsilon_j^2,
 \end{aligned}
 \ee
 where $C_1:= 2\kappa^2 c^{-1}(F(x_0) -  F^\star   +  \mu) + 2\chi L_{\max}$.

Suppose for contradiction that the algorithm does not terminate. Then
 $\kappa \|\eta_j\| + (\chi_j + \beta_2 \epsilon_j^2)^{1/2} > \epsilon_j$  for all \( j \geq 0 \). Substituting this into \eqref{equ:outer:complexity:new:03}, and using $2(\beta_1 \kappa^2 + \beta_2)< 1$ by \eqref{equ:beta1:beta2}, we deduce
 $\sum_{j = 0}^{J_1}  \tau_j \ell_j  \epsilon_j^2 \leq  (1 -2 \beta_1 \kappa^2 - 2\beta_2)^{-1} C_1$ for all $J_1 \geq 0.$
Considering that $\tau_j \geq \min\{s \bar \tau, 1\}$ by \eqref{equ:i:max}, $\ell_j \geq L_{\min}$, and $\epsilon_j \geq \min\{L_{\max}^{-1},1\} \epsilon$ by $\ell_j \geq L_{\max}$ and \eqref{equ:epsilon:j}, this makes a contradiction. Therefore, the algorithm must terminate after finitely many iterations. Let $J \geq 1$ be the termination index (the case  $J = 0$ is trivial). Then,  we have 
\be \label{equ:etak:etak:star:003}  
\kappa \|\eta_{J}\| + (\chi_{J} + \beta_2 \epsilon_J^2)^{1/2} \leq \epsilon_J, \quad \kappa \|\eta_j\| + (\chi_j + \beta_2 \epsilon_j^2)^{1/2} > \epsilon_j, \,  j = 0,1, \ldots, J-1.
\ee 
Substituting \eqref{equ:etak:etak:star:003}  into \eqref{equ:outer:complexity:new:03}, and using $0< \tau_j \leq 1$ together with \eqref{equ:beta1:beta2}, we have 
 \be \label{equ:outer:complexity:new:05}
\sum_{j=0}^{J-1}  \tau_j \ell_j \epsilon_j^2 \leq  (1 - 2\beta_1 \kappa^2 - 2\beta_2)^{-1}(C_1 +  \ell_J \epsilon_J^2). 
\ee
Using \eqref{equ:epsilon:j}, together with $\tau_j \geq \min\{s \bar \tau, 1\}$ by \eqref{equ:i:max} and $L_{\min}\leq \ell_j \leq L_{\max}$, we further have
 $\ell_J \epsilon_J^2 \leq \epsilon^2$ and $ \tau_j \ell_j \epsilon_j^2 \geq  \min\{L_{\max}^{-1}, L_{\min}\} \min\{s \bar \tau, 1\}\epsilon^2$. Substituting these into \eqref{equ:outer:complexity:new:05} gives the iteration bound
\be \label{equ:barJ:bound:new} 
J \leq  \frac{C_1 +  \epsilon^2}{(1 - 2 \beta_1 \kappa^2 - 2\beta_2)\min\{s \bar \tau, 1\} \cdot \min\{L_{\max}^{-1}, L_{\min}\}} \epsilon^{-2}. 
\ee

It remains to verify that  $x_{J}$ is an $\epsilon$-Riemannian critical point as defined in \eqref{equ:Delta}.       
From \eqref{equ:eta:inexact:b} and \eqref{equ:etak:etak:star:003}, we have
$\|\eta_{J}^\star\| \leq \epsilon_J \leq \epsilon$. 
Moreover,  {the optimality of \eqref{equ:etak} at iteration $j = J$ implies that there exists $y = x_{J} + \eta_{J}^\star$ such that $0 \in \grad f(x_{J}) - \xi_{J} + \ell_{J} \eta_{J}^\star + \partial_{\Rcal} h(y)$, where $\xi_J \in \partial_{\Rcal} g(x_J)$. Since  $\ell_{J}\|\eta_{J}^\star\| \leq \ell_J \epsilon_J \leq \epsilon$ by \eqref{equ:epsilon:j},  it follows from  \eqref{equ:Delta}  that $x_{J}$ is an $\epsilon$-Riemannian critical point  of problem \eqref{equ:prob:dc}.}
Combining this with \eqref{equ:barJ:bound:new},  we conclude that the algorithm terminates within $\mathcal{O}(\epsilon^{-2})$ iterations and returns an $\epsilon$-Riemannian critical point of problem \eqref{equ:prob:dc}. 
 \end{proof}
\begin{remark}
If $\epsilon = 0$, then $\epsilon_j = 0$ by \eqref{equ:epsilon:j}, and it follows from \eqref{equ:F:descent:abstract} and \eqref{equ:muj} that $F(x_j) \leq F(x_0) + \mu$ for all $j \geq 1$.  By Assumption \ref{assumption:f:h:g}-(iii), the sequence $\{x_j\}$ is bounded and thus has a limit point. Following standard arguments (e.g., \cite[Theorem 3.1]{huang2022extension}),  any such limit point is a Riemannian critical point of problem \eqref{equ:prob:dc}.
\end{remark}

We conclude this section with some remarks on condition \eqref{equ:eta:inexact}.  First, condition \eqref{equ:eta:inexact:a} differs from the inexactness criteria based on  the $\varepsilon$-subdifferential, $\varepsilon$-optimality, or their 
variants (see, e.g., \cite{villa2013accelerated,drusvyatskiy2019efficiency,zheng2024new,yang2025inexact} and the references therein
for some recent advances). Instead, it directly compares $q_j(\eta_j)$ and $q_j(0)$ and explicitly permits a degree of nonmonotonicity via introducing the error term $\mu_j$. This generalization distinguishes our framework from existing inexact Riemannian proximal gradient methods for the special case $g(\cdot) = 0$ (e.g., \cite{huang2023inexact,huang2025riemannian}), where \(q_j(\eta_j) \leq q_j(0)\) is typically required. Second,  allowing such nonmonotonicity in $q_j(\eta_j)$ increases flexibility 
in choosing  \(\mu_j\), which can be adapted using the information from previous iterates; see  Section \ref{section:complexity} for practical strategies. Third, condition \eqref{equ:eta:inexact} provides the foundation for establishing the linesearch condition \eqref{equ:F:descent:abstract}, which exploits an adaptive estimate of the local curvature and may further enhances the practical performance of our framework. It should be noted that the original ManPG~\cite{chen2020proximal} incorporates a linesearch procedure, but it requires exact solutions of the subproblems. In contrast, a linesearch strategy built upon inexact subproblem criteria, as enabled by condition \eqref{equ:eta:inexact}, has not been considered in the inexact Riemannian proximal gradient methods~\cite{huang2023inexact,huang2025riemannian}.

%%%%%%%%%%%%%%%%%%%%%%%%%%%%%%%%%%%%%%%%%%%%%%%%%%%%%%%%%%%%
\section{iRPDC algorithms and complexity analysis}\label{section:complexity}
In Section~\ref{subsection:practical:condition}, we discuss practical strategies for selecting the parameters in conditions \eqref{equ:eta:inexact} and \eqref{equ:muj} by analyzing the dual of the subproblem \eqref{equ:etak} and establishing several useful properties. Then, in Section~\ref{subsection:iRPDCA}, we present and analyze several implementations of the iRPDC algorithmic framework, including both practically effecient and theoretically motivated algorithms. 
\subsection{Practical implementation of conditions \eqref{equ:eta:inexact} and \eqref{equ:muj}} \label{subsection:practical:condition}
To this end, we first consider the dual formulation of the subproblem \eqref{equ:etak} and derive several key properties that will be instrumental in the design and analysis of our practical implementation.

Since $\Ecal$  is a finite-dimensional Euclidean space, we denote it by $\Rbb^n$ for simplicity. Let $d$ be the dimension of $\Mcal$. For any $x_j \in \Mcal$,  the tangent space $\tangent_{x_j} \Mcal$ can be characterized by  
\be\label{equ:tangent:space:expression} 
\tangent_{x_j} \Mcal =\{\eta \in \Rbb^n \mid B_{j}^{\tran}\eta = 0\},
\ee
where the columns of $B_{j} \in \Rbb^{n \times (n-d)}$ form an orthonormal basis of the normal space $\tangent_{x_j} \Mcal^\perp$, so that $B_j^\tran B_j = I_{n-d}$. Computing $B_j$ is efficient for many common manifolds, such as the Stiefel manifold,  the Grassmann manifold, and the fixed-rank matrix manifold; see \cite{huang2023inexact} for details. 

Using \eqref{equ:tangent:space:expression}, we can equivalently reformulate problem \eqref{equ:etak} as 
\begin{equation} \label{equ:etak:simple}
 \min\limits_{\eta \in \Rbb^n}\ q_j(\eta)\quad  \st\quad   B_j^\tran \eta = 0. 
\end{equation} 
Let $\lambda \in \Rbb^{n-d}$ be the Lagrange multiplier associated with the linear constraint $B_j^{\tran}\eta = 0$.  The dual problem of \eqref{equ:etak:simple}, in the minimization form, is given by 
\be \label{equ:prob:etak:dual:00} 
\min_{\lambda \in \Rbb^{n-d}}\, \left\{\psi_j(\lambda):= -\min_{\eta \in \Rbb^n}\, \Big\{q_j(\eta) + \iprod{\lambda}{B_j^\tran \eta}\!\Big\}\right\}. 
\ee
For any fixed $\lambda \in \Rbb^{n - d}$, the inner minimization problem in \eqref{equ:prob:etak:dual:00} admits a unique solution
\be\label{equ:eta:lambda}  
\eta_j(\lambda) =  \prox_{\frac{h}{\ell_j}}\left(x_j -   \frac{1}{\ell_j}(p_j   +  B_j \lambda)\right) - x_j.
\ee
Since \(p_j \in \tangent_{x_j}\Mcal\) (see \eqref{equ:shorthand}), it follows from \eqref{equ:tangent:space:expression} that \(B_j^\tran p_j = 0\). A direct calculation yields the dual formulation of the subproblem \eqref{equ:etak}  as
\be \label{equ:prob:etak:dual} 
\min_{\lambda \in \Rbb^{n-d}}\, \left\{ \psi_j(\lambda) = \frac{1}{2\ell_j} \|\lambda\|^{2}   - M_{\frac{h}{\ell_j}}\left(x_j -   \frac{1}{\ell_j}(p_j   +  B_j \lambda)\right)  + \frac{1}{2\ell_j} \|p_j\|^2\right\}. 
\ee

\begin{proposition}\label{prop:psi}
The gradient $\nabla \psi$ is $\ell_j^{-1}$-Lipschitz continuous, and
\be \label{equ:nabla:theta:grad} 
\nabla \psi_j(\lambda) = - B_j^\tran  \eta_j(\lambda). 
\ee
Moreover, if  $\psi_j(\lambda) \leq \psi_j(0)$, then $\| \lambda\| \leq  2 {L_h^0}$.
\end{proposition}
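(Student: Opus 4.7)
}

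The plan is to treat all three assertions as consequences of the explicit Moreau-envelope representation of $\psi_j$ given in \eqref{equ:prob:etak:dual}. Writing $z(\lambda) := x_j - \ell_j^{-1}(p_j + B_j\lambda)$, I view $\psi_j$ as the sum of a quadratic in $\lambda$ and a composition of $M_{h/\ell_j}$ with the affine map $z(\cdot)$. The three pieces of the proposition then reduce to (a) a chain-rule calculation, (b) a nonexpansiveness estimate for $\prox_{h/\ell_j}$, and (c) a Lipschitz property of $M_{h/\ell_j}$ inherited from $h$. I expect no serious obstacles; the only place requiring a little care is showing that $M_{h/\ell_j}$ is $L_h^0$-Lipschitz (not merely $\ell_j$-related), since this is what produces the clean bound $2L_h^0$ independent of $\ell_j$.

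For the gradient formula \eqref{equ:nabla:theta:grad}, I would use the standard identity $\nabla M_{h/\ell_j}(z) = \ell_j\bigl(z - \prox_{h/\ell_j}(z)\bigr)$ and the chain rule with Jacobian $\partial z/\partial\lambda = -\ell_j^{-1} B_j$, giving
\[
\nabla\psi_j(\lambda) = \tfrac{1}{\ell_j}\lambda + B_j^{\tran}\bigl(z(\lambda) - \prox_{h/\ell_j}(z(\lambda))\bigr).
\]
Substituting $\prox_{h/\ell_j}(z(\lambda)) = x_j + \eta_j(\lambda)$ from \eqref{equ:eta:lambda} and expanding $z(\lambda) - x_j$, the expression collapses to $-B_j^{\tran}\eta_j(\lambda)$ after invoking the two orthogonality identities $B_j^{\tran}p_j = 0$ (since $p_j \in \tangent_{x_j}\Mcal$ and the columns of $B_j$ span the normal space) and $B_j^{\tran}B_j = I_{n-d}$.

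The $\ell_j^{-1}$-Lipschitz bound on $\nabla\psi_j$ then follows immediately: by \eqref{equ:nabla:theta:grad} the difference $\nabla\psi_j(\lambda_1) - \nabla\psi_j(\lambda_2)$ equals $-B_j^{\tran}$ applied to $\prox_{h/\ell_j}(z(\lambda_1)) - \prox_{h/\ell_j}(z(\lambda_2))$. Using nonexpansiveness of the proximal operator, $\|B_j^{\tran}\|_{\mathrm{op}}=1$, and $\|z(\lambda_1) - z(\lambda_2)\| = \ell_j^{-1}\|B_j(\lambda_1-\lambda_2)\| = \ell_j^{-1}\|\lambda_1-\lambda_2\|$ yields the claim in one line.

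For the dual-variable bound, I first argue that $M_{h/\ell_j}$ is $L_h^0$-Lipschitz: since the optimality condition of the prox gives $\nabla M_{h/\ell_j}(z) = \ell_j(z - \prox_{h/\ell_j}(z)) \in \partial h(\prox_{h/\ell_j}(z))$, and since $h$ is $L_h^0$-Lipschitz so $\|\partial h(\cdot)\|\leq L_h^0$, the gradient of $M_{h/\ell_j}$ is bounded by $L_h^0$. Consequently,
\[
M_{h/\ell_j}(z(\lambda)) - M_{h/\ell_j}(z(0)) \leq L_h^0\|z(\lambda) - z(0)\| = \tfrac{L_h^0}{\ell_j}\|\lambda\|.
\]
Plugging this into the definition of $\psi_j$ in \eqref{equ:prob:etak:dual} gives
\[
\psi_j(\lambda) - \psi_j(0) \geq \tfrac{1}{2\ell_j}\|\lambda\|^2 - \tfrac{L_h^0}{\ell_j}\|\lambda\|,
\]
so the hypothesis $\psi_j(\lambda)\leq\psi_j(0)$ forces $\tfrac{1}{2}\|\lambda\|\leq L_h^0$, i.e.\ $\|\lambda\|\leq 2L_h^0$, completing the proof.
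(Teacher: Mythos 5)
Your proposal is correct and follows essentially the same route as the paper: the paper obtains the gradient formula and the $\ell_j^{-1}$-Lipschitz bound by citing standard Moreau-envelope/dual results (Theorems 6.42 and 6.60 of Beck) and proves $\|\lambda\|\le 2L_h^0$ via exactly your inequality $\tfrac{1}{2\ell_j}\|\lambda\|^2 \le M_{h/\ell_j}(z(\lambda)) - M_{h/\ell_j}(z(0)) \le \tfrac{L_h^0}{\ell_j}\|\lambda\|$, invoking the $L_h^0$-Lipschitz continuity of the Moreau envelope from the literature. The only difference is that you spell out the cited facts (chain rule, prox nonexpansiveness, and the subgradient bound giving $\|\nabla M_{h/\ell_j}\|\le L_h^0$) explicitly, which is fine.
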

\begin{proof}
The Lipschitz continuity of $\nabla \psi$ and the identity \eqref{equ:nabla:theta:grad} follow  from \cite[Theorems 6.42 and 6.60]{beck2017first} and \eqref{equ:eta:lambda}.  To show $\|\lambda\|\leq 2L_h^0$, note that $\psi_j(\lambda)\leq \psi_j(0)$ implies
\[
   \frac{1}{2\ell_j} \|\lambda\|^2 
\leq   M_{\frac{h}{\ell_j}}\left(x_j -   \frac{1}{\ell_j}(p_j   +  B_j \lambda)\right) - M_{\frac{h}{\ell_j}}\left(x_j -   \frac{1}{\ell_j} p_j\right) 
\leq \frac{{L_h^0}}{\ell_j} \| \lambda\|, 
\]
where the second inequality follows from the ${L_h^0}$-Lipschitz continuity of $M_{h/\ell_j}(\cdot)$ \cite[Lemma 2.1]{drusvyatskiy2019efficiency}.  The claim follows.
\end{proof}

For any $\lambda\in \Rbb^{n - d}$, define
\be \label{equ:def:widehat:eta:lambda}
\widehat \eta_j(\lambda) = \proj_{\tangent_{x_j} \Mcal} \big(\eta_j(\lambda)\big) \in \tangent_{x_j} \Mcal.
\ee  
The following lemma shows that $\widehat \eta_j(\lambda)$ \revgpt{can potentially satisfy} the inexact condition \eqref{equ:eta:inexact}
by appropriately controlling  $\|\nabla \psi_j(\lambda)\|$. 
\begin{lemma} \label{lemma:basic:descent}
Let  $\lambda\in \Rbb^{n - d}$. Then, we have
\begin{subequations}
\begin{align} 
&q_j(\widehat \eta_j(\lambda)) \leq q_j(0) - \frac{\ell_j}{2} \|\widehat \eta_j(\lambda)\|^2 + 2{L_h^0} \|\nabla \psi_j(\lambda)\|\label{equ:qj:descent}\\
&\|\eta_j^\star\| \leq  \|\widehat \eta_j(\lambda)\|   + \big(4{L_h^0} \ell_j^{-1}\|\nabla \psi_j(\lambda)\| +   \|\nabla \psi_j(\lambda)\|^2\big)^{1/2}. \label{equ:etaj:star:bound}
\end{align}
\end{subequations}
\end{lemma}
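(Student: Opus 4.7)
The plan is to set $\eta_\lambda := \eta_j(\lambda)$, $\widehat\eta_\lambda := \widehat\eta_j(\lambda)$, and $r_\lambda := \nabla \psi_j(\lambda)$, and to exploit the orthogonal decomposition induced by $B_j$. Since $\proj_{\tangent_{x_j}\Mcal}(\cdot) = (I - B_j B_j^\tran)(\cdot)$ and $B_j^\tran \eta_\lambda = -r_\lambda$ by \eqref{equ:nabla:theta:grad}, I obtain $\widehat\eta_\lambda - \eta_\lambda = B_j r_\lambda$, which together with $B_j^\tran B_j = I$ yields the Pythagorean identity $\|\eta_\lambda\|^2 = \|\widehat\eta_\lambda\|^2 + \|r_\lambda\|^2$. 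This identity will underpin both parts of the proof.

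For part~(a), I chain three estimates. First, a direct expansion of $q_j(\widehat\eta_\lambda) - q_j(\eta_\lambda)$, using $\langle p_j, B_j r_\lambda\rangle = 0$ (because $B_j^\tran p_j = 0$) together with the $L_h^0$-Lipschitz continuity of $h$, gives $q_j(\widehat\eta_\lambda) \leq q_j(\eta_\lambda) - \tfrac{\ell_j}{2}\|r_\lambda\|^2 + L_h^0\|r_\lambda\|$. Second, since $\eta_\lambda$ minimizes the $\ell_j$-strongly convex function $\eta \mapsto q_j(\eta) + \langle B_j\lambda, \eta\rangle$, comparing values at $0$ and at $\eta_\lambda$ yields $q_j(\eta_\lambda) \leq q_j(0) + \langle \lambda, r_\lambda\rangle - \tfrac{\ell_j}{2}\|\eta_\lambda\|^2$. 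The main obstacle is a sharp control of $\langle\lambda, r_\lambda\rangle$: a naive Cauchy--Schwarz bound together with $\|\lambda\|\leq 2L_h^0$ from Proposition~\ref{prop:psi} would leave a residual $3L_h^0\|r_\lambda\|$, one factor too large. The remedy is to use the subgradient optimality for $\eta_\lambda$: there exists $g_\lambda \in \partial h(x_j + \eta_\lambda)$ with $\|g_\lambda\|\leq L_h^0$ and $g_\lambda = -(p_j + B_j\lambda + \ell_j\eta_\lambda)$. Premultiplying by $B_j^\tran$ and using $B_j^\tran B_j = I$, $B_j^\tran p_j = 0$, and $B_j^\tran \eta_\lambda = -r_\lambda$ gives the identity $\lambda = \ell_j r_\lambda - B_j^\tran g_\lambda$, from which $\langle\lambda, r_\lambda\rangle = \ell_j\|r_\lambda\|^2 - \langle B_j^\tran g_\lambda, r_\lambda\rangle \leq \ell_j\|r_\lambda\|^2 + L_h^0\|r_\lambda\|$. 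Combining the three estimates and invoking the Pythagorean identity cancels the $\ell_j\|r_\lambda\|^2$ terms and yields \eqref{equ:qj:descent}.

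For part~(b), the plan is to upper bound $q_j(\widehat\eta_\lambda) - q_j(\eta_j^\star)$ and then invoke strong convexity. Weak duality, together with the identity $\psi_j(\lambda) = -q_j(\eta_\lambda) + \langle\lambda, r_\lambda\rangle$ (immediate from \eqref{equ:prob:etak:dual:00} and \eqref{equ:eta:lambda}), gives $q_j(\eta_j^\star) \geq -\psi_j(\lambda) = q_j(\eta_\lambda) - \langle\lambda, r_\lambda\rangle$, so that $q_j(\eta_\lambda) - q_j(\eta_j^\star) \leq \langle\lambda, r_\lambda\rangle$. Chaining this with the first estimate from part~(a) and the sharp bound on $\langle\lambda, r_\lambda\rangle$ derived above produces $q_j(\widehat\eta_\lambda) - q_j(\eta_j^\star) \leq 2L_h^0\|r_\lambda\| + \tfrac{\ell_j}{2}\|r_\lambda\|^2$. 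Since both $\widehat\eta_\lambda$ and $\eta_j^\star$ lie in $\tangent_{x_j}\Mcal$ and $\eta_j^\star$ is the constrained minimizer of the $\ell_j$-strongly convex function $q_j$ over $\tangent_{x_j}\Mcal$, strong convexity gives $\tfrac{\ell_j}{2}\|\widehat\eta_\lambda - \eta_j^\star\|^2 \leq q_j(\widehat\eta_\lambda) - q_j(\eta_j^\star)$. Extracting $\|\widehat\eta_\lambda - \eta_j^\star\| \leq (4L_h^0\ell_j^{-1}\|r_\lambda\| + \|r_\lambda\|^2)^{1/2}$ and applying the triangle inequality $\|\eta_j^\star\| \leq \|\widehat\eta_\lambda\| + \|\widehat\eta_\lambda - \eta_j^\star\|$ completes the proof of \eqref{equ:etaj:star:bound}.
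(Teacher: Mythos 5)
Your proposal is correct, and it rests on the same essential ingredients as the paper's proof: the proximal optimality condition $\ell_j\eta_j(\lambda)+p_j+B_j\lambda+\zeta=0$ with $\zeta\in\partial h(x_j+\eta_j(\lambda))$ and $\|\zeta\|\leq L_h^0$, the identity $\|\widehat\eta_j(\lambda)-\eta_j(\lambda)\|=\|\nabla\psi_j(\lambda)\|$, duality for the subproblem, and $\ell_j$-strong convexity of $q_j$ over the tangent space. That said, your organization differs in two ways that are worth noting. First, for \eqref{equ:qj:descent} you route the argument through the multiplier itself, deriving the explicit identity $\lambda=\ell_j\nabla\psi_j(\lambda)-B_j^\tran\zeta$ and bounding $\iprods{\lambda}{\nabla\psi_j(\lambda)}\leq \ell_j\|\nabla\psi_j(\lambda)\|^2+L_h^0\|\nabla\psi_j(\lambda)\|$, combined with the Pythagorean relation $\|\eta_j(\lambda)\|^2=\|\widehat\eta_j(\lambda)\|^2+\|\nabla\psi_j(\lambda)\|^2$; the paper instead never bounds $\iprods{\lambda}{\nabla\psi_j(\lambda)}$ for part (a) and works directly with the subgradient inequality for $h$ between $x_j$ and $x_j+\eta_j(\lambda)$ together with projection identities. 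The two computations are algebraically equivalent, but yours makes the bookkeeping of the $\ell_j$-terms more transparent. Second, for \eqref{equ:etaj:star:bound} you invoke only weak duality ($-\psi_j(\lambda)\leq q_j(\eta_j^\star)$ for every $\lambda$, an immediate consequence of the definition of $\psi_j$ and $B_j^\tran\eta_j^\star=0$), whereas the paper uses strong duality at $\lambda^\star$ plus $\psi_j(\lambda^\star)\leq\psi_j(\lambda)$; your version is slightly more elementary since it does not require the existence of a dual optimizer or a zero duality gap. You also correctly identify, and avoid, the trap of using the bound $\|\lambda\|\leq 2L_h^0$ from Proposition \ref{prop:psi}, which would both lose a factor of $L_h^0\|\nabla\psi_j(\lambda)\|$ and require the unassumed hypothesis $\psi_j(\lambda)\leq\psi_j(0)$. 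All constants come out exactly as stated.
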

\begin{proof} 
For simplicity, we drop the subscript $j$ in the proof.  We first prove  \eqref{equ:qj:descent}.    
 By the property of the proximal operator and \eqref{equ:eta:lambda}, there exists  $\zeta \in \partial h(x + \eta(\lambda))$ such that
\be \label{equ:h:xj:etalambda:opt} 
\ell \eta(\lambda) + p + B \lambda + \zeta = 0.
\ee
Using the convexity of $h(\cdot)$ at $x+\eta(\lambda)$, we have 
\be \label{equ:hxj:bound}
h(x) \geq h\bigl(x+\eta(\lambda)\bigr) - \iprod{\eta(\lambda)}{\zeta} = h\bigl(x+\eta(\lambda)\bigr) + \iprod{\widehat \eta(\lambda)- \eta(\lambda)}{\zeta} - \iprod{\widehat{\eta}(\lambda)}{\zeta}.
\ee
  Since $h$ is ${L_h^0}$-Lipschitz continuous,  we also have 
  \be  \label{equ:hxj:bound:02} 
 h(x + \eta(\lambda))  \geq h(x + \widehat \eta(\lambda)) - {L_h^0} \|\widehat \eta(\lambda)  -   \eta(\lambda) \|.
 \ee
 Moreover, noting that $\zeta\in\partial h(x+\eta(\lambda))$ and that $h(\cdot)$ is $L_h^0$-Lipschitz continuous, we obtain  
 \be \label{equ:xi:bound}
 \|\zeta\|\leq {L_h^0},
 \ee
 which implies 
 \be  \label{equ:hxj:bound:00}
  \iprod{\widehat \eta(\lambda) - \eta(\lambda)}{\zeta} \geq - {L_h^0} \|\widehat \eta(\lambda) - \eta(\lambda)\|. 
 \ee
Using the definition of $\widehat \eta(\lambda)$ in \eqref{equ:def:widehat:eta:lambda}, the expression for the tangent space in \eqref{equ:tangent:space:expression}, and  the property of the projection operator, we have 
 \be \label{equ:widehat:eta:lamdba:property:1} 
 \widehat \eta(\lambda) =   \eta( \lambda)- B  B^\tran \eta( \lambda), \quad  \iprod{\widehat \eta(\lambda)}{\eta(\lambda)} = \|\widehat \eta(\lambda)\|^2. 
 \ee
Since $B^\tran \widehat \eta(\lambda) = 0$ by $\widehat \eta(\lambda) \in \tangent_{x} \Mcal$, combining \eqref{equ:h:xj:etalambda:opt} and  \eqref{equ:widehat:eta:lamdba:property:1}, we have
 \be  \label{equ:hxj:bound:01}
 \iprod{\widehat \eta(\lambda)}{\zeta} = - \ell \|\widehat \eta(\lambda)\|^2  - \iprod{\widehat \eta(\lambda)}{p}. 
 \ee
 From \eqref{equ:widehat:eta:lamdba:property:1} and \eqref{equ:nabla:theta:grad}, it holds that 
\be \label{equ:hxj:bound:03}
 \|\widehat \eta(\lambda) - \eta(\lambda)\|  =\|B  B^\tran \eta( \lambda)\| = \|B^\tran \eta( \lambda)\| =  \|\nabla \psi(\lambda)\|. 
\ee 
 Substituting \eqref{equ:hxj:bound:02}, \eqref{equ:hxj:bound:00}, and \eqref{equ:hxj:bound:01} into \eqref{equ:hxj:bound}, and using  \eqref{equ:hxj:bound:03},  we get 
\[
 h(x) \geq h(x + \widehat \eta(\lambda))   - 2{L_h^0}  \| \nabla \psi(\lambda)\|   + \ell \|\widehat \eta(\lambda)\|^2   + \iprod{\widehat \eta(\lambda)}{p}. 
 \]
 Using the definition of $q(\cdot)$ in \eqref{equ:etak}, this inequality implies  \eqref{equ:qj:descent}.

We now prove \eqref{equ:etaj:star:bound}.   
 Let $\lambda^\star$ be an optimal solution of problem \eqref{equ:prob:etak:dual}, then $\eta^\star := \eta(\lambda^\star)$ is an optimal solution of problem \eqref{equ:etak:simple}. 
By strong duality, we have
\be\label{equ:eta:star:k:duailty} 
\psi(\lambda^\star) = - q(\eta^\star).
\ee
By \eqref{equ:prob:etak:dual:00}, we also have
$\psi( \lambda) = -q(\eta(  \lambda)) - \iprods{\eta(  \lambda)}{ B \lambda}$, 
which, together with \eqref{equ:eta:star:k:duailty},  $\psi(\lambda^\star) \leq \psi(\lambda)$, and \eqref{equ:h:xj:etalambda:opt}, implies   
\[
 q(\eta(\lambda)) - q(\eta^\star) 
\leq - \iprods{\eta(  \lambda)}{ B \lambda} 
\leq \iprods{\eta( \lambda)}{\zeta} + \iprods{\eta( \lambda)}{p} + \ell \|\eta(\lambda)\|^2.
\]
Using the definition of $q(\cdot)$ in \eqref{equ:etak},  we have   
\[  
\begin{aligned}
 & q(\widehat \eta(\lambda))  - q(\eta( \lambda))\\
={}&   \iprods{\widehat \eta(\lambda)-  \eta( \lambda)}{p}  + h(x +  \widehat \eta(\lambda) ) - h(x + \eta( \lambda))   + \frac{\ell}{2}(\|\widehat \eta(\lambda)\|^2 -  \| \eta(\lambda)\|^2)\\
\leq{}&  -\iprods{\widehat \eta(\lambda)}{\zeta}  -  \iprods{ \eta( \lambda)}{p}  + {L_h^0} \|  \widehat \eta(\lambda) - \eta(\lambda)\| -  \frac{\ell}{2}(\|\widehat \eta(\lambda)\|^2 + \| \eta(\lambda)\|^2),
\end{aligned}
\]
where the inequality uses \eqref{equ:hxj:bound:01} and  the $L_h^0$-Lipschitz continuity of $h(\cdot)$.   
Adding the two inequalities, and noting $\|\eta(\lambda)\|^2 - \|\widehat \eta(\lambda)\|^2 = \|\eta(\lambda) - \widehat \eta(\lambda)\|^2$ by \eqref{equ:widehat:eta:lamdba:property:1}, we have 
\be\label{equ:hxj:bound:04}
q(\widehat \eta(\lambda)) - q(\eta^\star) \leq \iprods{\eta(\lambda) - \widehat \eta(\lambda)}{\zeta} + {L_h^0} \|  \widehat \eta(\lambda) - \eta(\lambda)\|  + \frac{\ell}{2}\left(\| \eta(\lambda) - \widehat  \eta(\lambda)\|^2\right).
\ee
Since $q(\cdot)$ is $\ell$-strongly convex over $\tangent_{x}\Mcal$, we have 
$q(\widehat \eta(\lambda)) - q(\eta^\star) \geq (\ell/2) \|\widehat \eta(\lambda) - \eta^\star\|^2$. Combining this with \eqref{equ:xi:bound},   \eqref{equ:hxj:bound:03}, and
\eqref{equ:hxj:bound:04},  we derive the desired \eqref{equ:etaj:star:bound}.  
\end{proof}

It is worth mentioning that 
\cite[Lemma 5]{huang2025riemannian} established the following upper bound:
\[
q_j(\widehat \eta_j(\lambda)) \leq q_j(0) + (2L_h^0 + (\ell_j/2) \|\nabla \psi_j(\lambda)\|) \|\nabla \psi_j(\lambda)\|, 
\]
which, however, does not guarantee a decrease in $q_j(\cdot)$ and is thus insufficient for analyzing iteration complexity. In contrast, our bound in \eqref{equ:qj:descent} ensures a sufficient descent by well controlling  $\|\nabla \psi_j(\lambda)\|$ and is therefore stronger.

By Lemma \ref{lemma:basic:descent}, if we choose $\widetilde \lambda$ such that 
 \be  \label{equ:nabla:theta:condition:00} 
 \|\nabla \psi_j( \widetilde \lambda)\| \leq \varepsilon_j: =\min\left\{\frac{1}{2 L_h^0}  \left(\mu_j + \frac{\rho\ell_j}{2} \|\eta_j\|^2 + c \beta_1 \ell_j \epsilon_j^2\right), \frac{4 L_h^0}{\ell_j}\right\},
\ee
and set $\eta_j = \widehat \eta_j(\widetilde \lambda)$, 
then \eqref{equ:eta:inexact:a} is satisfied, and \eqref{equ:eta:inexact:b} holds with 
\be\label{equ:kappa:chij:beta2:practical}
\kappa = 1, \quad \chi_j  = \frac{4}{\ell_j}  \left(\mu_j + \frac{\rho\ell_j}{2} \|\eta_j\|^2 \right),\quad  \beta_2 = 4c\beta_1. 
\ee
In view of \eqref{equ:beta1:beta2}, this requires $\beta_1$ to satisfy  $2(1 + 4c)\beta_1 < 1$. The threshold $\varepsilon_j$ in \eqref{equ:nabla:theta:condition:00} is defined as the minimum of two terms, and the cap $4 L_h^0\ell_j^{-1}$ plays a crucial role.  Without this cap, $\varepsilon_j$ would be determined solely by the first term, which can exceed $4L_h^0\ell_j^{-1}$.  In that case, by Proposition \ref{equ:nabla:theta:grad}, any $\widetilde \lambda$ satisfying $\psi_j(\widetilde \lambda) \leq \psi_j(0)$ (e.g., $\widetilde \lambda = 0$) would automatically satisfy 
 \[
 \|\nabla \psi_j(\widetilde \lambda)\| = \|\nabla \psi_j(\widetilde \lambda) - \nabla \psi_j(\lambda_j^\star)\| \leq \ell_j^{-1}\|\widetilde \lambda - \lambda_j^\star\|\leq 4L_h^0 \ell_j^{-1} < \varepsilon_j.
 \] 
 making condition~\eqref{equ:nabla:theta:condition:00} trivially satisfied and potentially causing premature termination.

It remains to choose $\mu_j $ satisfying \eqref{equ:muj} such that $\chi_j$ in \eqref{equ:kappa:chij:beta2:practical} satisfies \eqref{equ:chi}.  Let  $\{\omega_j\}$ be a nonnegative and summable sequence, i.e., $\omega_j \geq 0$ and $\sum_{j=0}^{+\infty} \omega_j < +\infty$.  A simple summable choice is
\[
 \omega_j = \omega_0  \ell_{j}  (j + 1)^{-a},
 \]
where $\omega_0 \geq 0$ and $a > 1$ are constants. Based on this, we choose
\be \label{equ:mu:k:general}
 \mu_j =
\frac{\rho}{2}\left(\tau_{j-1} \ell_{j-1} \|\eta_{j-1}\|^2 -  \ell_j \|\eta_{j}\|^2\right) +   \omega_0  \ell_{j}  (j + 1)^{-a}, \quad \forall\, j \geq 0, 
\ee
with initialization $\eta_{-1} = 0$, $\tau_{-1} = 1$,  and  $\ell_{-1} \in [L_{\min}, L_{\max}]$.  
\revgpt{Since} $0 < \tau_j \leq 1$,  we have  
\be \label{equ:tjmuj} 
\tau_j \mu_j \leq \frac{\rho}{2}\left(\tau_{j-1} \ell_{j-1} \|\eta_{j-1}\|^2 -  \tau_j \ell_j \|\eta_{j}\|^2\right) +  \omega_0  \ell_{j}  (j + 1)^{-a},
\ee
ensuring that $\{\mu_j\}$ satisfies  \eqref{equ:muj} with $\mu = L_{\max} \omega_0 \sum_{j = 0}^{+\infty} (j+1)^{-a}$.  Under this choice, $\chi_j$ in \eqref{equ:kappa:chij:beta2:practical} becomes 
\be \label{equ:widetilde:chij} 
\chi_j = \frac{2\rho\tau_{j-1}   \ell_{j-1}  \|\eta_{j-1}\|^2 +  4 \omega_0 \ell_j  (j + 1)^{-a}}{\ell_j}. 
\ee
Since $\eta_{j-1}$ is known at the $j$-th iteration, and by an argument similar  to \eqref{equ:outer:complexity:new:00},  we have that,  for  $0 \leq j \leq J \leq \mathcal{O}(\epsilon^{-2})$ (with $J$ given in \eqref{equ:barJ:bound:new}), 
\[
 \sum_{t = 0}^{j} \tau_{t-1} \ell_{t-1} \|\eta_{t-1}\|^2   \leq    c^{-1}(F(x_0) -  F^\star   +  \mu) + \beta_1 \sum_{t = 0}^J  \tau_t  \ell_t  \epsilon_t^2 \leq C_2, 
\]
where $C_2$ is some constant, with the second inequality from $0<\tau_t\leq 1$ and \eqref{equ:epsilon:j}. Hence, the constructed $\chi_j$ in \eqref{equ:widetilde:chij} satisfies $\sum_{t=0}^j \chi_t \leq \chi$ for some constant $\chi$.

Building on the preceding discussions, in particular the choices of $\mu_j$ in \eqref{equ:mu:k:general} and $\chi_j$ in \eqref{equ:widetilde:chij}, the inexactness threshold $\varepsilon_j$ in \eqref{equ:nabla:theta:condition:00} admits the implementable form stated below.
 \begin{proposition} \label{prop:nabla:theta:condition:practical}
Let $\widetilde{\lambda} \in \mathbb{R}^{n-d}$ satisfy 
 \be \label{equ:nabla:theta:condition} 
\|\nabla \psi_j(\widetilde{\lambda})\| \leq \varepsilon_j
:= \min\left\{\frac{\rho \tau_{j-1}  \ell_{j-1}   \|\eta_{j-1}\|^2 + 2 \omega_0 \ell_j  (j + 1)^{-a}+ 2 c\beta_1 \ell_j \epsilon_j^2}{4L_h^0},\ \frac{4L_h^0}{\ell_j}\right\}.
\ee
Let $\eta_j = \widehat{\eta}_j(\widetilde{\lambda})$.
If $2(1 + 4c)\beta_1 < 1$, then conditions \eqref{equ:eta:inexact} and \eqref{equ:muj} hold with $\kappa = 1$ and $\beta_2 = 4c\beta_1$.
\end{proposition}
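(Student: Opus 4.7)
The plan is to decompose the verification into three parts: (i) the sufficient-decrease condition~\eqref{equ:eta:inexact:a}; (ii) the stationarity bound~\eqref{equ:eta:inexact:b}; and (iii) the summability requirements~\eqref{equ:chi} and~\eqref{equ:muj}. The two arguments of the minimum defining $\varepsilon_j$ in~\eqref{equ:nabla:theta:condition} are engineered precisely to deliver (i) and (ii), respectively, via Lemma~\ref{lemma:basic:descent}.

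For (i), I would substitute the explicit formula for $\mu_j$ from~\eqref{equ:mu:k:general} and observe the cancellation
\[
\mu_j + \tfrac{\rho\ell_j}{2}\|\eta_j\|^2 + c\beta_1\ell_j\epsilon_j^2 = \tfrac{\rho}{2}\tau_{j-1}\ell_{j-1}\|\eta_{j-1}\|^2 + \omega_0\ell_j(j+1)^{-a} + c\beta_1\ell_j\epsilon_j^2,
\]
which makes the first argument of the minimum in~\eqref{equ:nabla:theta:condition} coincide exactly with $\tfrac{1}{2L_h^0}(\mu_j + \tfrac{\rho\ell_j}{2}\|\eta_j\|^2 + c\beta_1\ell_j\epsilon_j^2)$, matching the abstract threshold~\eqref{equ:nabla:theta:condition:00}. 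Plugging the hypothesis $\|\nabla\psi_j(\widetilde{\lambda})\|\leq\varepsilon_j$ into~\eqref{equ:qj:descent} and setting $\eta_j=\widehat{\eta}_j(\widetilde{\lambda})$ then yields~\eqref{equ:eta:inexact:a} at once.

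For (ii), the role of the cap $\|\nabla\psi_j(\widetilde{\lambda})\|\leq 4L_h^0/\ell_j$ in $\varepsilon_j$ is to linearise the quadratic term in~\eqref{equ:etaj:star:bound}: it gives $\|\nabla\psi_j(\widetilde{\lambda})\|^2 \leq 4L_h^0\ell_j^{-1}\|\nabla\psi_j(\widetilde{\lambda})\|$, so that Lemma~\ref{lemma:basic:descent} upgrades to $\|\eta_j^\star\|\leq \|\eta_j\| + (8L_h^0\ell_j^{-1}\|\nabla\psi_j(\widetilde{\lambda})\|)^{1/2}$. Invoking the first argument of $\varepsilon_j$ once more and matching with the definition of $\chi_j$ in~\eqref{equ:widetilde:chij} produces exactly $(\chi_j + 4c\beta_1\epsilon_j^2)^{1/2}$, delivering~\eqref{equ:eta:inexact:b} with $\kappa=1$ and $\beta_2=4c\beta_1$. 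The standing assumption $2(1+4c)\beta_1<1$ is precisely what is needed to ensure $2(\beta_1\kappa^2+\beta_2)<1$, so~\eqref{equ:beta1:beta2} holds.

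For (iii), the bound $\sum_{t=0}^{j}\tau_t\mu_t\leq \mu$ follows by telescoping~\eqref{equ:tjmuj}: since $\eta_{-1}=0$ and $\tau_t\in(0,1]$, the $\tau_{t-1}\ell_{t-1}\|\eta_{t-1}\|^2$-differences collapse and only $L_{\max}\omega_0\sum_{t=0}^{+\infty}(t+1)^{-a}<+\infty$ (valid for $a>1$) remains. The main obstacle is verifying $\sum_{t=0}^{j}\chi_t\leq \chi$, because $\chi_t$ in~\eqref{equ:widetilde:chij} involves the iterate-dependent term $\tau_{t-1}\ell_{t-1}\|\eta_{t-1}\|^2$, whose summability is not obvious a priori. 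The resolution, which I expect to require the most care, is to note that the backtracking descent~\eqref{equ:F:descent:abstract} relies only on~\eqref{equ:eta:inexact:a} and the already-established bound on $\sum_t\tau_t\mu_t$, not on the $\chi_t$-summability itself; summing it along the lines of~\eqref{equ:outer:complexity:new:00} and then controlling $\sum_t\tau_t\ell_t\epsilon_t^2$ by the iteration bound $J=\mathcal{O}(\epsilon^{-2})$ from Theorem~\ref{theorem:outer:complexity} yields a universal finite $\chi$. The argument is inductive in $j$ but not circular, because $\chi_t$ in~\eqref{equ:widetilde:chij} is determined solely by iterates preceding $t$.
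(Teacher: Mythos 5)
Your proposal is correct and follows essentially the same route as the paper, whose ``proof'' of this proposition is the discussion in Section~\ref{subsection:practical:condition} preceding it: the cancellation of the $\|\eta_j\|^2$ terms when substituting \eqref{equ:mu:k:general} into the abstract threshold \eqref{equ:nabla:theta:condition:00}, the use of the cap $4L_h^0/\ell_j$ to absorb the quadratic term in \eqref{equ:etaj:star:bound} and identify $\chi_j$ and $\beta_2=4c\beta_1$, the telescoping of \eqref{equ:tjmuj} for \eqref{equ:muj}, and the a posteriori bound on $\sum_t \tau_{t-1}\ell_{t-1}\|\eta_{t-1}\|^2$ via the argument of \eqref{equ:outer:complexity:new:00} for the summability of $\chi_j$. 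Your explicit remark that the $\chi_t$-summability argument is noncircular because $\chi_t$ depends only on earlier iterates matches the paper's observation that $\eta_{j-1}$ is already known at iteration $j$.
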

 
Finally, define the augmented function $F_{\rho}(x_j): = F(x_j) + \frac{\rho \tau_{j-1} \ell_{j-1}}{2} \|\eta_{j-1}\|^2$. By \eqref{equ:tjmuj},  the linesearch condition \eqref{equ:F:descent:abstract}  then implies the following relaxed form (sufficient for convergence analysis): 
 \be \label{equ:ls:new}
F_{\rho}(x_{j+1})   \leq F_{\rho}(x_{j})  - c  \tau_j \ell_j \|\eta_j\|^2 + c \beta_1  \tau_j \ell_j  \epsilon_j^2 +  \omega_0  \ell_{j}  (j + 1)^{-a},
\ee
which will be adopted in the practical algorithms described in  Section \ref{subsection:iRPDCA}.

%%*********************************************************************************
\subsection{iRPDC: algorithms and complexity}\label{subsection:iRPDCA}
To compute a point  satisfying \eqref{equ:nabla:theta:condition}, we first consider two first-order approaches: (i) applying Nesterov's fast gradient (NFG) method to a regularized dual problem \cite{nesterov2018lectures}, and (ii) applying the safeguard BB gradient method \cite{barzilai1988two,dai2005projected} to the original dual problem. These lead to two {practically efficient algorithms within the iRPDC algorithmic framework}, denoted by iRPDC-NFG and iRPDC-BB, which are described below.  

We begin with iRPDC-NFG, which solves the following  regularized  dual problem:  
\be \label{prob:theta:regularization}
\min_{\lambda \in \Rbb^{n-d}}\, \Big\{\psi_{\delta_j}(\lambda):=\psi_j(\lambda) + \frac{\delta_j}{2} \|\lambda\|^{2}\Big\},
\ee
where the regularization parameter is $\delta_j :=  \varepsilon_j/(4{L_h^0})$, motivated by the upper bound $2 L_h^0$  in Proposition \ref{prop:psi}.   
The gradient $\nabla \psi_{\delta_j}$ is $(\ell_j^{-1} + \delta_j)$-Lipschitz continuous.  Starting from $\lambda^{(0)} = \lambda^{(-1)}=  0$, for $t = 0, 1, \ldots$,  the NFG method iterates as 
\be
\label{equ:nesterov}
\begin{cases}
y^{(t)}= \lambda^{(t)} + \frac{\sqrt{\kappa_j} - 1}{\sqrt{\kappa_j} + 1}\big(\lambda^{(t)}- \lambda^{(t-1)}\big),\\[4pt]
\lambda^{(t+1)}=   y^{(t)} - (\ell_j^{-1} + \delta_j)^{-1} \nabla \psi_{\delta_j}(y^{(t)}),
\end{cases}
\ee
where  $\kappa_j = 1 + (\ell_j \delta_j)^{-1}$.  

\begin{lemma} \label{lemma:inner:complexity}  
Suppose that Assumption \ref{assumption:f:h:g} holds. Then, the NFG method \eqref{equ:nesterov} returns  a point $\widetilde \lambda$  satisfying  \eqref{equ:nabla:theta:condition} in $\mathcal{O}\big(\varepsilon_j^{-1/2} \log (1 + \varepsilon_j^{-1})\big)$ iterations. 
Let $\eta_j = \widehat{\eta}_j(\widetilde{\lambda})$.
If $2(1 + 4c)\beta_1 < 1$, then conditions \eqref{equ:eta:inexact} and \eqref{equ:muj} hold with $\kappa = 1$ and $\beta_2 = 4c\beta_1$.
 \end{lemma}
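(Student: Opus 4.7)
The plan is to control $\|\nabla\psi_j(\tilde\lambda)\|$ via the decomposition
\[
\nabla\psi_j(\tilde\lambda) = \nabla\psi_{\delta_j}(\tilde\lambda) - \delta_j \tilde\lambda,
\]
which yields $\|\nabla\psi_j(\tilde\lambda)\| \le \|\nabla\psi_{\delta_j}(\tilde\lambda)\| + \delta_j\|\tilde\lambda\|$, and then to apply Nesterov's classical linear convergence rate to the $\delta_j$-strongly convex, $(\ell_j^{-1}+\delta_j)$-smooth function $\psi_{\delta_j}$. The choice $\delta_j = \varepsilon_j/(4L_h^0)$ is calibrated precisely so that the ``bias'' term $\delta_j\|\tilde\lambda\|$ is absorbed into a fraction of $\varepsilon_j$ whenever $\|\tilde\lambda\|$ is comparable to $L_h^0$.

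First, I would bound the regularized optimum $\lambda^\star_{\delta_j} := \argmin_\lambda \psi_{\delta_j}(\lambda)$ via Proposition \ref{prop:psi}. Indeed, by optimality, $\psi_{\delta_j}(\lambda^\star_{\delta_j}) \le \psi_{\delta_j}(0) = \psi_j(0)$, which implies $\psi_j(\lambda^\star_{\delta_j}) \le \psi_j(0) - (\delta_j/2)\|\lambda^\star_{\delta_j}\|^2 \le \psi_j(0)$, and Proposition \ref{prop:psi} then gives $\|\lambda^\star_{\delta_j}\| \le 2L_h^0$. An immediate corollary, via the Lipschitz upper bound for $\psi_{\delta_j}$ at $0$, is the initial-gap estimate
\[
\psi_{\delta_j}(0) - \psi_{\delta_j}(\lambda^\star_{\delta_j}) \le \frac{\ell_j^{-1} + \delta_j}{2} \|\lambda^\star_{\delta_j}\|^2 \le 2(L_h^0)^2(\ell_j^{-1} + \delta_j).
\]

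Second, I would invoke Nesterov's linear rate for strongly convex smooth minimization (see, e.g., \cite{nesterov2018lectures}): with momentum tuned to $\kappa_j = 1 + (\ell_j\delta_j)^{-1}$ and $\lambda^{(0)} = 0$,
\[
\psi_{\delta_j}(\lambda^{(t)}) - \psi_{\delta_j}(\lambda^\star_{\delta_j}) \le 2\bigl(1 - \kappa_j^{-1/2}\bigr)^t\bigl(\psi_{\delta_j}(0) - \psi_{\delta_j}(\lambda^\star_{\delta_j})\bigr).
\]
Combining this with $\delta_j$-strong convexity on the left and the initial-gap estimate above yields
\[
\|\lambda^{(t)} - \lambda^\star_{\delta_j}\|^2 \le 8(L_h^0)^2 \kappa_j \bigl(1 - \kappa_j^{-1/2}\bigr)^t,
\]
so that $\|\nabla\psi_{\delta_j}(\lambda^{(t)})\| \le (\ell_j^{-1}+\delta_j)\|\lambda^{(t)} - \lambda^\star_{\delta_j}\|$ decays geometrically. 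The triangle inequality and Proposition \ref{prop:psi} give $\|\tilde\lambda\| \le 2L_h^0 + \|\lambda^{(t)} - \lambda^\star_{\delta_j}\|$, so that $\delta_j\|\tilde\lambda\| \le \varepsilon_j/2 + \delta_j\|\lambda^{(t)} - \lambda^\star_{\delta_j}\|$.

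Third, I would set $t$ so that both $\|\nabla\psi_{\delta_j}(\lambda^{(t)})\|$ and $\delta_j\|\lambda^{(t)} - \lambda^\star_{\delta_j}\|$ are at most $\varepsilon_j/4$, yielding $\|\nabla\psi_j(\tilde\lambda)\|\le \varepsilon_j$. Using $-\log(1-\kappa_j^{-1/2}) \ge \kappa_j^{-1/2}$, this requires
\[
t \ge 2\sqrt{\kappa_j}\,\log\!\Bigl( C\,\sqrt{\kappa_j}(\ell_j^{-1}+\delta_j)/\varepsilon_j \Bigr)
\]
for an absolute constant $C$ depending only on $L_h^0$. Since $\kappa_j = 1 + 4L_h^0/(\ell_j \varepsilon_j) = \mathcal{O}(\varepsilon_j^{-1})$, we obtain the stated inner complexity $\mathcal{O}\bigl(\varepsilon_j^{-1/2}\log(1+\varepsilon_j^{-1})\bigr)$. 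Once $\|\nabla\psi_j(\tilde\lambda)\| \le \varepsilon_j$ is established, the second claim (that $\eta_j = \widehat\eta_j(\tilde\lambda)$ satisfies \eqref{equ:eta:inexact} and \eqref{equ:muj} with $\kappa=1$, $\beta_2 = 4c\beta_1$) follows directly from Proposition \ref{prop:nabla:theta:condition:practical}.

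The main technical obstacle is the careful bookkeeping that the composite bound $\|\nabla\psi_{\delta_j}(\tilde\lambda)\| + \delta_j\|\tilde\lambda\|$ is truly at most $\varepsilon_j$; in particular the term $\delta_j\|\tilde\lambda\|$ cannot be collapsed to $\delta_j\|\lambda^\star_{\delta_j}\|$, and must be handled by splitting $\tilde\lambda = \lambda^\star_{\delta_j} + (\tilde\lambda - \lambda^\star_{\delta_j})$. Everything else is routine application of Nesterov's theorem and the smoothness/strong convexity of $\psi_{\delta_j}$ inherited from Proposition \ref{prop:psi}.
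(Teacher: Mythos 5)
Your proposal is correct and follows essentially the same route as the paper: both bound $\|\lambda^\star_{\delta_j}\|\le 2L_h^0$ so that with $\delta_j=\varepsilon_j/(4L_h^0)$ the regularization bias contributes $\varepsilon_j/2$, both convert the function-value gap of the $\delta_j$-strongly convex surrogate into a distance bound and invoke Nesterov's linear rate to obtain the $\mathcal{O}\big(\sqrt{\kappa_j}\,\log(1+\varepsilon_j^{-1})\big)$ count, and both delegate the second claim to Proposition~\ref{prop:nabla:theta:condition:practical}. The only immaterial difference is that you split $\nabla\psi_j(\widetilde\lambda)=\nabla\psi_{\delta_j}(\widetilde\lambda)-\delta_j\widetilde\lambda$, whereas the paper routes the triangle inequality through $\nabla\psi_j(\lambda^\star_{\delta_j})$ and the $\ell_j^{-1}$-Lipschitz continuity of $\nabla\psi_j$; the resulting bounds agree up to constants.
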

 \begin{proof}
By Proposition \ref{prop:nabla:theta:condition:practical}, it suffices to establish the complexity of computing $\widetilde \lambda$ such that \eqref{equ:nabla:theta:condition} holds. Let $\lambda_{\delta_j}^\star$ be the unique minimizer of problem \eqref{prob:theta:regularization}. Similar to Proposition \ref{prop:psi}, we have  $\|\lambda^\star_{\delta_j}\|\leq 2 {L_h^0}.$
 Using \cite[Section 2.2.2]{nesterov2018lectures} and the choice  $\delta_j = \varepsilon_j/{(4{L_h^0})}$,  we obtain 
$\| \nabla \psi_j(\widetilde \lambda) \| \leq \varepsilon_j/2 + \ell_j^{-1} (8{L_h^0}\varepsilon_j^{-1} ( \psi_{\delta_j}(\widetilde \lambda) -   \psi_{\delta_j}( \lambda_{\delta_j}^\star)))^{1/2}.$
Hence, to ensure $\| \nabla \psi_j(\widetilde \lambda) \| \leq \varepsilon_j$, it suffices to require 
 $\psi_{\delta_j}(\widetilde \lambda) -   \psi_{\delta_j}(\lambda_{\delta_j}^\star) \leq (\ell_j^{2} \varepsilon_j^3)/(32 {L_h^0})$.
Since  $\nabla \psi_j$ is  $\ell_j^{-1}$-Lipschitz continuous, \cite[Theorem 2.2.7]{nesterov2018lectures} guarantees that this can be achieved within at most 
$3 \big\lceil (1 + {4{L_h^0} \ell_j^{-1}}{\varepsilon_j^{-1}})^{1/2} \log(1 +  {4 {L_h^0} \ell_j^{-1}}{ \varepsilon_j^{-1}} )\big\rceil$
iterations.
\end{proof}

\begin{algorithm2e}[!t]
\DontPrintSemicolon
\KwIn{$\epsilon > 0$, $x_0 \in \Mcal$, $\rho \in [0, 1), c \in (0, 1- \rho/2), s \in (0,1), \beta_1 \in (0, 1/(2 + 8c))$, $\omega_0 > 0$, $a > 1$, $0 < L_{\min} \leq L_{\max}$.
}
\For{$j = 0, 1, \dots$}{
     Choose $\ell_j \in [L_{\min}, L_{\max}]$ and select $\mu_j$ according to \eqref{equ:mu:k:general}.\;
    Use  the NFG method \eqref{equ:nesterov} to find a point $\widetilde \lambda$  satisfying \eqref{equ:nabla:theta:condition}.\; 
   Compute  $\eta_j$  as in Lemma \ref{lemma:inner:complexity}, and $\chi_j$ via \eqref{equ:widetilde:chij}.\;
   \textbf{if} $\|\eta_j\|  +  (\chi_j+  4c \beta_1\epsilon_j^2)^{1/2}\leq \epsilon_j$\, \textbf{then}~return~$x_j$.\;
   \For{$i = 0,1,\ldots$}{
    Set $\tau_j = s^i$ and update $x_{j+1} = \Retr_{x_j}(\tau_j\,\eta_j)$.\;
     \textbf{if} \eqref{equ:ls:new} holds \textbf{then}~break.
      }
}
\caption{A practical iRPDC-NFG for solving problem \eqref{equ:prob:dc}}\label{alg:rdca:2}
\end{algorithm2e}

We summarize the complete  iRPDC-NFG in Algorithm~\ref{alg:rdca:2}. Its iteration complexity is stated below. 
\begin{theorem}\label{theorem:overall:nfg}
Suppose that Assumptions~\ref{assumption:f:h:g} and~\ref{assumption:retraction} hold. Then, for any $0 < \epsilon \ll 1$, Algorithm~\ref{alg:rdca:2} returns an $\epsilon$-Riemannian critical point of problem \eqref{equ:prob:dc}  within $\mathcal{O}(\epsilon^{-2})$ outer iterations and $\mathcal{O}(\epsilon^{-3} \log \epsilon^{-1})$ inner iterations.  Therefore,  the algorithm requires $\mathcal{O}(\epsilon^{-2})$ evaluations of $\rgrad f(\cdot)$ and $\Retr_x(\cdot)$, and $\mathcal{O}(\epsilon^{-3} \log \epsilon^{-1})$ evaluations of $\prox_{h}(\cdot)$.  
\end{theorem}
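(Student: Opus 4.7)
The plan is to combine the outer iteration bound already established in Theorem \ref{theorem:outer:complexity} with the per-iteration inner complexity bound from Lemma \ref{lemma:inner:complexity}. First, I would verify that Algorithm \ref{alg:rdca:2} is a valid instance of the iRPDC framework: the choice of $\mu_j$ in \eqref{equ:mu:k:general}, the step acceptance criterion \eqref{equ:ls:new}, and the inexact solution $\eta_j=\widehat\eta_j(\widetilde\lambda)$ produced by the NFG method all satisfy conditions \eqref{equ:eta:inexact} and \eqref{equ:muj} with $\kappa=1$ and $\beta_2=4c\beta_1$ (using $2(1+4c)\beta_1<1$). Hence Theorem \ref{theorem:outer:complexity} applies directly and yields the outer iteration bound $J\leq \mathcal{O}(\epsilon^{-2})$ together with the termination property that $x_J$ is an $\epsilon$-Riemannian critical point.

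Next, I would control the number of inner NFG iterations per outer iteration by establishing a uniform lower bound on the tolerance $\varepsilon_j$ defined in \eqref{equ:nabla:theta:condition}. The key observation is that the first argument of the minimum in \eqref{equ:nabla:theta:condition} dominates the $\epsilon_j^2$ contribution, i.e., $\varepsilon_j\geq c\beta_1\ell_j\epsilon_j^2/(2L_h^0)$. Since $\epsilon_j=\min\{\ell_j^{-1},1\}\epsilon$ and $\ell_j\in[L_{\min},L_{\max}]$, one has $\ell_j\epsilon_j^2\geq \min\{L_{\min},L_{\max}^{-1}\}\epsilon^2$, so $\varepsilon_j\geq c_1\epsilon^2$ for a constant $c_1>0$ independent of $j$; the upper cap $4L_h^0/\ell_j$ in \eqref{equ:nabla:theta:condition} is a constant, and for small $\epsilon$ the lower bound lies below this cap, so no further care is needed. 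Plugging this into Lemma \ref{lemma:inner:complexity} gives a per-iteration inner complexity of $\mathcal{O}(\epsilon^{-1}\log\epsilon^{-1})$.

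Combining the two bounds yields a total inner iteration count of $\mathcal{O}(\epsilon^{-2})\cdot \mathcal{O}(\epsilon^{-1}\log\epsilon^{-1})=\mathcal{O}(\epsilon^{-3}\log\epsilon^{-1})$. To translate this into oracle counts, I would observe that each outer iteration requires a single evaluation of $\rgrad f(\cdot)$ (to form $p_j$) and, by Lemma \ref{lemma:abstract:descent}, at most $i_{\max}+1$ retractions during the backtracking loop, so the totals of $\rgrad f$ and $\Retr_x$ evaluations are $\mathcal{O}(\epsilon^{-2})$. Each NFG iteration \eqref{equ:nesterov} requires one $\prox_h$ call through \eqref{equ:eta:lambda} (which determines $\nabla\psi_j$ via \eqref{equ:nabla:theta:grad}), giving the claimed $\mathcal{O}(\epsilon^{-3}\log\epsilon^{-1})$ proximal-mapping complexity.

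The main obstacle, and the step I would focus the writing on, is the uniform lower bound on $\varepsilon_j$: one must carefully handle the $\min$ structure in \eqref{equ:nabla:theta:condition} and the case split arising from $\epsilon_j=\min\{\ell_j^{-1},1\}\epsilon$, because the other two summands in the first argument of the $\min$ (involving $\|\eta_{j-1}\|^2$ and $(j+1)^{-a}$) could a priori vanish at some iterations and thus cannot be relied upon. Isolating the $\epsilon_j^2$ term as the safe floor is the crux that lets the per-iteration inner bound depend only on $\epsilon$ (and not on the index $j$), and hence makes the summation over outer iterations trivial.
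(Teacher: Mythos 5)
Your proposal is correct and follows essentially the same route as the paper: the outer bound comes from Theorem \ref{theorem:outer:complexity} via Proposition \ref{prop:nabla:theta:condition:practical}, the per-iteration inner bound comes from Lemma \ref{lemma:inner:complexity} after lower-bounding $\varepsilon_j$ by a constant multiple of $\epsilon^2$, and the two are multiplied. In fact you supply more detail than the paper does on the crux step (isolating the $2c\beta_1\ell_j\epsilon_j^2$ term in the numerator of \eqref{equ:nabla:theta:condition} and handling the cap $4L_h^0/\ell_j$), which the paper compresses into a single sentence.
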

\begin{proof}
The outer iteration bound follows from Theorem~\ref{theorem:outer:complexity}, whose conditions are ensured by Proposition~\ref{prop:nabla:theta:condition:practical}. Let $J \leq \mathcal{O}(\epsilon^{-2})$ be the total number of outer iterations. Since $\ell_j \in [L_{\min}, L_{\max}]$ and $\varepsilon_j = \mathcal{O}(\epsilon^{-2})$ (by \eqref{equ:epsilon:j} and  \eqref{equ:nabla:theta:condition}), Lemma~\ref{lemma:inner:complexity} shows that each inner subproblem requires at most $\mathcal{O}(\epsilon^{-1} \log \epsilon^{-1})$ inner iterations. Summing over $j$ from $0$ to $J$ yields the stated total inner complexity. The evaluation bounds then follow  directly from the algorithm structure.
\end{proof}

We next present iRPDC-BB, an alternative practical algorithm that applies the safeguard BB method \cite{barzilai1988two,dai2005projected} to solve the original dual subproblem \eqref{equ:prob:etak:dual}.
Given  constants  $0 <\varrho_2 < 1 < \varrho_1$, the  method starts from  $\lambda^{(0)} = 0$ and iterates as 
 \be \label{equ:method:BB}
 \lambda^{(t+1)} = \lambda^{(t)} - \nu_t\nabla \psi_j (\lambda^{(t)}),\quad \nu_t = \min\{\nu^{\mathsf{BB}}_{t}, \varrho_1 \ell_j\}2^{-m},
  \ee
 where   $\nu^{\mathsf{BB}}_{0} = \ell_j$ and $\nu^{\mathsf{BB}}_{t} = \| \lambda^{(t)} - \lambda^{(t-1)}\|^2/\langle{\lambda^{(t)} - \lambda^{(t-1)}},{\nabla \psi_j(\lambda^{(t)}) - \nabla \psi_j(\lambda^{(t-1)})}\rangle$ for $t \geq 1$ 
with the convention $0/0 = +\infty$. Here, $m$ is  the smallest nonnegative integer such that 
$\psi_j(\lambda^{(t+1)}) \leq \psi_j ( \lambda^{(t)}) -  \varrho_2\nu_t \|\nabla \psi_j (\lambda^{(t)})\|^2$. 
It is known that  $0 \leq m \leq \lceil \log_2 (\varrho_1/(2(1-\varrho_2)) \rceil$ and that  the method achieves the iteration complexity $\mathcal{O}(\varepsilon_j^{-1})$ for computing a point satisfying \eqref{equ:nabla:theta:condition} \cite[Theorem 10.26]{beck2017first}.  The complete iRPDC-BB algorithm is given in Algorithm~\ref{alg:rdca:bb}, and its iteration complexity is summarized below; the proof is similar to that of Theorem \ref{theorem:overall:nfg} and is omitted for brevity.
\begin{theorem}\label{theorem:overall:bb}
Suppose that Assumptions~\ref{assumption:f:h:g} and~\ref{assumption:retraction} hold. Then, for any $0 < \epsilon \ll 1$, Algorithm~\ref{alg:rdca:bb} returns an $\epsilon$-Riemannian critical point of problem \eqref{equ:prob:dc}  within $\mathcal{O}(\epsilon^{-2})$ outer  iterations and $\mathcal{O}(\epsilon^{-4})$ inner iterations. Therefore, the algorithm requires $\mathcal{O}(\epsilon^{-2})$ evaluations of $\rgrad f(\cdot)$ and $\Retr_x(\cdot)$, and $\mathcal{O}(\epsilon^{-4})$ evaluations of $\prox_{h}(\cdot)$.  
\end{theorem}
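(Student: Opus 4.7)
The plan is to mirror the proof of Theorem \ref{theorem:overall:nfg}, substituting the inner iteration bound of the safeguard BB method for that of the NFG method. The proof splits naturally into three pieces: verifying the inexactness hypotheses of the abstract framework, bounding the per-outer-iteration inner cost, and combining the two bounds.

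First, I would check that Algorithm~\ref{alg:rdca:bb} fits the iRPDC framework. Since the parameter range $\beta_1 \in (0,1/(2+8c))$ gives $2(1+4c)\beta_1<1$, Proposition~\ref{prop:nabla:theta:condition:practical} applies: as soon as the inner BB loop produces $\widetilde\lambda$ satisfying \eqref{equ:nabla:theta:condition} and we set $\eta_j=\widehat\eta_j(\widetilde\lambda)$ with $\chi_j$ chosen according to \eqref{equ:widetilde:chij}, both \eqref{equ:eta:inexact} and \eqref{equ:muj} are satisfied with $\kappa=1$ and $\beta_2=4c\beta_1$. Theorem~\ref{theorem:outer:complexity} then gives the outer iteration bound $J\leq \mathcal{O}(\epsilon^{-2})$ and guarantees that the returned iterate is $\epsilon$-Riemannian critical.

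Second, I would bound the inner cost. The dual gradient $\nabla\psi_j$ is $\ell_j^{-1}$-Lipschitz by Proposition~\ref{prop:psi}, and $\ell_j\in[L_{\min},L_{\max}]$, so the Lipschitz constant is uniformly bounded. The internal safeguard stepsize search terminates in at most $\lceil\log_2(\varrho_1/(2(1-\varrho_2)))\rceil$ trials per BB step, which is an $\mathcal{O}(1)$ overhead. Applying \cite[Theorem 10.26]{beck2017first} to the safeguard BB iteration \eqref{equ:method:BB} yields a point satisfying $\|\nabla\psi_j(\widetilde\lambda)\|\leq \varepsilon_j$ in $\mathcal{O}(\varepsilon_j^{-1})$ iterations. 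Inspecting \eqref{equ:epsilon:j} and \eqref{equ:nabla:theta:condition}, $\varepsilon_j$ is lower-bounded by a constant multiple of $\epsilon^2$ (the cap $4L_h^0/\ell_j$ is bounded below by $4L_h^0/L_{\max}$, while the other term dominates by at least $2c\beta_1 L_{\min}\min\{L_{\max}^{-2},1\}\epsilon^2/(4L_h^0)$), so each outer step costs at most $\mathcal{O}(\epsilon^{-2})$ inner BB iterations.

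Third, multiplying the two bounds gives a total inner cost of $\mathcal{O}(\epsilon^{-2})\cdot\mathcal{O}(\epsilon^{-2})=\mathcal{O}(\epsilon^{-4})$. Finally, the oracle counts follow from the algorithm's structure: $\rgrad f(\cdot)$ and $\Retr_x(\cdot)$ are invoked once per outer iteration (plus at most $i_{\max}$ times in the outer linesearch, an $\mathcal{O}(1)$ factor), giving $\mathcal{O}(\epsilon^{-2})$ calls each, while $\prox_h(\cdot)$ is invoked once per inner BB iteration through \eqref{equ:eta:lambda}--\eqref{equ:nabla:theta:grad}, giving $\mathcal{O}(\epsilon^{-4})$ calls. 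I expect no serious obstacle, as all nontrivial building blocks (the inexactness criterion, the outer complexity theorem, and the known inner complexity of safeguard BB) are already in hand; the only minor care required is in justifying the uniform lower bound on $\varepsilon_j$ so that the $\mathcal{O}(\varepsilon_j^{-1})$ inner bound translates cleanly into $\mathcal{O}(\epsilon^{-2})$.
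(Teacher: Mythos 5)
Your proposal is correct and follows essentially the same route as the paper, which explicitly omits the proof of this theorem as being analogous to that of Theorem~\ref{theorem:overall:nfg}: invoke Proposition~\ref{prop:nabla:theta:condition:practical} and Theorem~\ref{theorem:outer:complexity} for the $\mathcal{O}(\epsilon^{-2})$ outer bound, use the cited $\mathcal{O}(\varepsilon_j^{-1})$ complexity of the safeguard BB method together with the lower bound $\varepsilon_j=\Omega(\epsilon^2)$ to get $\mathcal{O}(\epsilon^{-2})$ inner iterations per outer step, and multiply. Your explicit justification of the uniform lower bound on $\varepsilon_j$ is a detail the paper glosses over (it even contains the typo ``$\varepsilon_j=\mathcal{O}(\epsilon^{-2})$'' in the NFG proof), and you handle it correctly.
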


\begin{algorithm2e}[!t]
\DontPrintSemicolon
\KwIn{Same as in Algorithm~\ref{alg:rdca:2}, with additional parameters $0 < \varrho_2 < 1 < \varrho_1$.}
\For{$j = 0, 1, \dots$}{
    Same as in Algorithm~\ref{alg:rdca:2}, except that replacing the NFG method \eqref{equ:nesterov} with the safeguard BB method \eqref{equ:method:BB} to compute $\widetilde \lambda$ satisfying \eqref{equ:nabla:theta:condition}.\;
}
\caption{A practical iRPDC-BB  for solving problem \eqref{equ:prob:dc}}\label{alg:rdca:bb}
\end{algorithm2e}
 
We now present the third algorithm, iRPDC-AR, which improves the complexity of iRPDC-NFG by removing the $\log \epsilon^{-1}$ factor.
 To compute a point $\widetilde \lambda$ satisfying \eqref{equ:nabla:theta:condition},   this algorithm adopts the accumulative regularization (AR) method recently developed in \cite{lan2023optimal},  which enhances Nesterov's regularization technique \eqref{prob:theta:regularization}.  
 
 The method initializes with $\lambda^{(0)} = \lambda_0 = \bar \lambda_0 = 0$ and sets $\delta_{j,0} = \varepsilon_j / (8L_h^0)$.  For $i = 0, 1, \ldots,  \lceil \log_4 (2L_h^0\ell_j^{-1}\varepsilon_j^{-1})\rceil$,  it first updates the proximal center as 
 \be \label{equ:am:proximal:center}
 \bar \lambda_i = 0.25 \bar \lambda_{i-1} + 0.75 \lambda_{i-1},
 \ee and then solves the $i$-th AR subproblem 
\be \label{prob:theta:regularization:am}
\min_{\lambda \in \Rbb^{n-d}}\, \Big\{\psi_{\delta_{j,i}}(\lambda):=\psi_j(\lambda) + \frac{\delta_{j,i}}{2} \|\lambda - \bar \lambda_i\|^{2}\Big\}\quad \mbox{with} \quad \delta_{j,i} = 4^i \delta_{j,0}, 
\ee
by applying Nesterov's accelerated gradient method \cite{nesterov2018lectures} or FISTA \cite{beck2017first}  for $T_i :=  \left\lceil 16(\ell_j / \delta_{j,i} + 1)^{1/2} \right\rceil$ iterations.  Starting from $\lambda^{(0)} =\lambda^{(-1)}=  \lambda_{i-1}$,  the  updates are 
\be \label{equ:am:fista}
\begin{cases}
y^{(t)}= \lambda^{(t)} + \frac{t-1}{t+2}\big(\lambda^{(t)}- \lambda^{(t-1)}\big), \\[4pt]
\lambda^{(t+1)}=  y^{(t)} - (\ell_j^{-1} + \delta_{j,i})^{-1}\nabla \psi_{\delta_{j,i}}(y^{(t)}), 
\end{cases}
 \quad t = 0,1, \ldots, T_i. 
\ee
 The approximate solution to \eqref{prob:theta:regularization:am} is  then set as 
\be \label{prob:theta:regularization:lambda:i}
\lambda_i := \lambda^{(T_i+1)}.
\ee

\begin{algorithm2e}[!t]
\DontPrintSemicolon
\KwIn{Same as in Algorithm~\ref{alg:rdca:2}.}
\For{$j = 0, 1, \dots$}{
           Same as in Algorithm~\ref{alg:rdca:2}, except that replacing the NFG method \eqref{equ:nesterov} with the AR method \eqref{equ:am:proximal:center}, \eqref{equ:am:fista}, and \eqref{prob:theta:regularization:lambda:i} to compute $\widetilde \lambda$ satisfying \eqref{equ:nabla:theta:condition}.\;
}
\caption{A theoretical iRPDC-AR for solving problem \eqref{equ:prob:dc}}\label{alg:rdca:arm}
\end{algorithm2e}

According to \cite[Theorem 2.1]{lan2023optimal} and \cite[Theorem 10.34]{beck2017first}, this method produces a point $\widetilde \lambda$ satisfying  \eqref{equ:nabla:theta:condition}  within $\mathcal{O}(\varepsilon_j^{-1})$ iterations.  The resulting algorithm, iRPDC-AR, is summarized in Algorithm~\ref{alg:rdca:arm}, and its complexity is given below.
\begin{theorem}\label{theorem:overall}  Suppose that Assumptions~\ref{assumption:f:h:g} and~\ref{assumption:retraction} hold. Then, for any $0 < \epsilon \ll 1$, Algorithm~\ref{alg:rdca:arm} returns an $\epsilon$-Riemannian critical point of problem \eqref{equ:prob:dc}  within $\mathcal{O}(\epsilon^{-2})$ outer iterations and $\mathcal{O}(\epsilon^{-3})$ inner iterations. Therefore,  the algorithm requires $\mathcal{O}(\epsilon^{-2})$ evaluations of $\rgrad f(\cdot)$ and $\Retr_x(\cdot)$, and $\mathcal{O}(\epsilon^{-3})$ evaluations of $\prox_{h}(\cdot)$.  
\end{theorem}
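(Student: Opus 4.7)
The plan is to mirror the proof of Theorem~\ref{theorem:overall:nfg}, since Algorithms~\ref{alg:rdca:2} and~\ref{alg:rdca:arm} differ only in the subroutine that produces $\widetilde\lambda$ satisfying \eqref{equ:nabla:theta:condition}. Consequently, the outer-iteration analysis transfers essentially verbatim, and the main task is to bound the per-subproblem work of the accumulative regularization (AR) scheme \eqref{equ:am:proximal:center}--\eqref{prob:theta:regularization:lambda:i} on each dual problem \eqref{equ:prob:etak:dual} tightly enough to eliminate the $\log \epsilon^{-1}$ factor that appears in the NFG analysis.

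First, I would verify that the hypotheses of Theorem~\ref{theorem:outer:complexity} are in force. Since Algorithm~\ref{alg:rdca:arm} still sets $\eta_j = \widehat\eta_j(\widetilde\lambda)$ for some $\widetilde\lambda$ satisfying \eqref{equ:nabla:theta:condition}, Proposition~\ref{prop:nabla:theta:condition:practical} implies that \eqref{equ:eta:inexact} and \eqref{equ:muj} hold with $\kappa = 1$ and $\beta_2 = 4c\beta_1$ under the parameter choice $2(1+4c)\beta_1 < 1$. Invoking Theorem~\ref{theorem:outer:complexity} then yields the outer bound $J \leq \mathcal{O}(\epsilon^{-2})$ and identifies $x_J$ as an $\epsilon$-Riemannian critical point of \eqref{equ:prob:dc}. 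Combined with the uniform linesearch cap $i_{\max}$ from \eqref{equ:i:max} and the fact that each outer step uses a single evaluation of $\rgrad f$ and a subgradient of $g$ to assemble $p_j$, this already accounts for the $\mathcal{O}(\epsilon^{-2})$ counts for both $\rgrad f(\cdot)$ and $\Retr_x(\cdot)$.

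For the inner cost at outer iteration $j$, by \eqref{equ:epsilon:j} and $\ell_j \in [L_{\min},L_{\max}]$ we have $\epsilon_j = \Theta(\epsilon)$, and the term $2c\beta_1 \ell_j \epsilon_j^2/(4L_h^0)$ inside \eqref{equ:nabla:theta:condition} yields the worst-case bound $\varepsilon_j = \Omega(\epsilon^2)$, while the cap $4L_h^0/\ell_j$ gives $\varepsilon_j = \mathcal{O}(1)$. The AR method runs at most $I_j = \lceil\log_4(2L_h^0 \ell_j^{-1}\varepsilon_j^{-1})\rceil = \mathcal{O}(\log\varepsilon_j^{-1})$ rounds, with the $i$-th round running $T_i = \lceil 16(\ell_j/\delta_{j,i}+1)^{1/2}\rceil$ iterations of FISTA \eqref{equ:am:fista} on the regularized problem \eqref{prob:theta:regularization:am}. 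The geometric growth $\delta_{j,i} = 4^i\delta_{j,0}$ with $\delta_{j,0} = \varepsilon_j/(8L_h^0)$ forces $T_i$ to decay by a factor of $2$ in $i$, so summing gives
\begin{equation*}
\sum_{i=0}^{I_j} T_i \leq \mathcal{O}\bigl(\sqrt{\ell_j/\delta_{j,0}}\bigr) + \mathcal{O}(1) = \mathcal{O}(\varepsilon_j^{-1/2}) = \mathcal{O}(\epsilon^{-1}),
\end{equation*}
without any logarithmic overhead. By \cite[Theorem~2.1]{lan2023optimal} and \cite[Theorem~10.34]{beck2017first}, this budget suffices to deliver $\widetilde\lambda = \lambda_{I_j}$ satisfying \eqref{equ:nabla:theta:condition}, and each FISTA step consumes exactly one $\prox_h$ call through \eqref{equ:eta:lambda} and \eqref{equ:nabla:theta:grad}.

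Multiplying the per-outer inner cost $\mathcal{O}(\epsilon^{-1})$ by the outer bound $\mathcal{O}(\epsilon^{-2})$ gives the total inner count $\mathcal{O}(\epsilon^{-3})$, which directly translates into $\mathcal{O}(\epsilon^{-3})$ evaluations of $\prox_h(\cdot)$ as claimed. The main obstacle I anticipate is carefully justifying the removal of the $\log\epsilon^{-1}$ factor that was unavoidable in Lemma~\ref{lemma:inner:complexity}; this hinges on coupling the geometric growth $\delta_{j,i} = 4^i\delta_{j,0}$ with the warm-start $\lambda^{(0)} = \lambda_{i-1}$ in \eqref{equ:am:fista}, so that the FISTA counts $T_i$ form a convergent geometric series dominated by $T_0 = \mathcal{O}(\varepsilon_j^{-1/2})$, in contrast to the $\mathcal{O}(\log\varepsilon_j^{-1})$ overhead accumulated by the single-regularization NFG scheme.
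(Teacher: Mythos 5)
Your proposal is correct and follows essentially the same route the paper takes: the outer bound comes from Proposition~\ref{prop:nabla:theta:condition:practical} together with Theorem~\ref{theorem:outer:complexity}, and the inner bound comes from the per-subproblem cost of the AR scheme, which the paper simply asserts (citing \cite[Theorem~2.1]{lan2023optimal} and \cite[Theorem~10.34]{beck2017first}) and does not prove in detail. Your explicit summation $\sum_i T_i = \mathcal{O}\bigl(\sqrt{\ell_j/\delta_{j,0}}\bigr) = \mathcal{O}(\varepsilon_j^{-1/2}) = \mathcal{O}(\epsilon^{-1})$ is in fact the correct accounting needed for the stated $\mathcal{O}(\epsilon^{-3})$ total (the additive term from the ceilings is $\mathcal{O}(\log\varepsilon_j^{-1})$ rather than $\mathcal{O}(1)$, but this is dominated), and it is sharper than the paper's in-text claim of ``$\mathcal{O}(\varepsilon_j^{-1})$ iterations'' per subproblem, which would only yield $\mathcal{O}(\epsilon^{-4})$.
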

\begin{remark}
Among the three algorithms, iRPDC-AR attains the best theoretical complexity, matching the best-known bound summarized in Table \ref{table:complexity}. However, it requires a fixed number of iterations to solve each AR subproblem, which may result in unnecessary computations in practice.  In contrast, the first two algorithms, iRPDC-NFG and iRPDC-BB, adaptively terminate the inner iterations based on the gradient norm, offering more efficient performance in practical applications. In addition to these first-order methods, semismooth Newton-type approaches \cite{li2018highly,xiao2018regularized} provide another viable option for solving the dual subproblem to obtain a point satisfying \eqref{equ:nabla:theta:condition}. First adopted in \cite{chen2020proximal}, such methods have been utilized  in  subsequent works \cite{chen2020alternating,huang2022extension,huang2023inexact} to address  \eqref{equ:prob:etak:dual} for problem \eqref{equ:prob:dc} with  $g(\cdot) = 0$.  Although they often demonstrate excellent empirical performance, their iteration complexity and superlinear convergence remain unclear in our specific setting.  
\end{remark}

We conclude this section with a few remarks.  First, when \(g(\cdot) = 0\) in \eqref{equ:prob:dc}, our proposed iRPDC algorithms reduce to  inexact versions of the ManPG method proposed by \cite{chen2020proximal}. {While several inexact variants of ManPG have been studied in this setting (e.g., \cite{chen2020alternating,huang2025riemannian,huang2023inexact}), to the best of our knowledge,  iRPDC is among the first algorithms in this line of work with a provable overall complexity.}  Second,  both iRPDC-NFG and iRPDC-AR improve upon existing methods listed in Table  \ref{table:complexity}. Specifically, these methods require \(\mathcal{O}(\epsilon^{-3})\) evaluations of  \(\rgrad f(\cdot)\), along with retractions and proximal mappings of \(h(\cdot)\). 
In contrast, iRPDC-NFG and iRPDC-AR both reduce the number of Riemannian gradient and retraction evaluations to $\mathcal{O}(\epsilon^{-2})$, with the number of proximal mappings being $\mathcal{O}(\epsilon^{-3}\log \epsilon^{-1})$ and $\mathcal{O}(\epsilon^{-3})$, respectively. This improvement can be particularly advantageous when evaluating $\rgrad f(\cdot)$ is computationally expensive (e.g., \cite{wang2022Riemannian}).

\section{Numerical results}\label{section:numerical}
In this section, we present numerical results on SPCA problems to evaluate the modeling effectiveness of the DC formulations \eqref{prob:sphere:l0:capped:l1} and \eqref{prob:sphere:knorm}, and the computational efficiency of the proposed iRPDC algorithmic framework.  All algorithms are implemented in MATLAB R2024b and executed on a Mac mini with an Apple M4 Pro processor and 24GB of memory. 

\subsection{Two DC-type SPCA models}
Let $A \in \Rbb^{m \times n}$ be the data matrix with $m$ samples and $n$ attributes.  Although PCA is popular in dimensionality reduction,  
its limited interpretability has motivated the  
development of sparse PCA (SPCA)  
\cite{d2004direct,d2008optimal,journee2010generalized,chen2020proximal,cai2021note}.  
 SPCA can be formulated either as the $\ell_0$-regularized model
\be \label{prob:spca:l0:reg}
\min_{X \in \stief}   -\mtr\!\left( X^\tran A^\tran A  X\right) + \sigma \|X\|_0,
\ee
or as the $\ell_0$-constrained model
\be \label{prob:spca:l0:constrained}
\min_{X \in \stief}   -\mtr\!\left( X^\tran A^\tran A  X\right)\quad \st \quad \|X\|_0 \leq k,
\ee
where $k \geq r$ is a prescribed sparsity parameter and $\sigma > 0$ is a regularization parameter. 

To address the computational challenge posed by the $\ell_0$ models, a widely adopted relaxation is the $\ell_1$-SPCA (see, e.g., \cite{chen2020proximal}):
\be \label{prob:spca:l1:reg}
\min_{X \in \stief}   -\mtr\!\left( X^\tran A^\tran A  X\right) + \gamma \|X\|_1,
\ee
where $\gamma > 0$ is a regularization parameter. This model serves as the baseline in our experiments.  
While computationally tractable, this relaxation may fail to faithfully capture the sparsity structures of the original $\ell_0$ models. Motivated by the equivalence results in Section \ref{section:DC:sparse}, we introduce two DC-type relaxations that are more closely connected to the $\ell_0$ formulations:  
(i) \emph{Capped-$\ell_1$-SPCA} 
\be\label{equ:spca:capped:l1} 
\min_{X \in \stief}   -\mtr\!\left( X^\tran A^\tran A  X\right) + \gamma \sum_{ij} \min\{\upsilon |X_{ij}|, 1\}, 
\ee
and (ii) \emph{$\ell_1$-$\ell_{[k]}$-SPCA} 
\be\label{equ:spca:laregest:knorm} 
\min_{X \in \stief}   -\mtr\!\left( X^\tran A^\tran A  X\right) + \gamma \big( \|X\|_1 - \normmm{X}_k\big),
\ee
where $\upsilon > 0$.  
Both models are instances of the general DC formulation \eqref{equ:prob:dc} and yield asymptotically exact relaxations of \eqref{prob:spca:l0:reg} and \eqref{prob:spca:l0:constrained}, respectively.  
Unlike the $\ell_1$-regularized model, these DC-type relaxations admit finite-parameter equivalence on the sphere. 
In particular, the parameter $\gamma$ in the capped-$\ell_1$-SPCA equals the sparsity penalty $\sigma$ in the $\ell_0$-regularized model \eqref{prob:spca:l0:reg}. 

\subsection{Numerical results on DC-type DCA}
In this subsection, we present numerical results  
 to illustrate the modeling effectiveness of  capped-$\ell_1$- and $\ell_1$-$\ell_{[k]}$-SPCA in comparison with the $\ell_1$-SPCA baseline, as well as the computational efficiency of the proposed iRPDC algorithms against OADMM  \cite{yuan2024admm}.

\subsubsection{Experimental setup} 
We consider two types of data matrices $A$:
(i) random instances generated as in \cite{zhou2023semismooth} with $m=500$ and $n=4000$;
(ii) the real dataset \texttt{cifar10} from LIBSVM \cite{CC01a}, with $m=500$ randomly selected rows and feature dimension $n=3072$.
For each type, results over 20 independent instances are reported.  Two evaluation metrics are used: the scaled variance ${\rm v_{sc}} = \|A\bar X\|_{\Ftt}^2/\|AX^{\rm pca}\|_{\Ftt}^2$, where $\bar X$ is the solution of a tested model and $X^{\rm pca}$ is the PCA solution of \eqref{prob:spca:l0:reg} with $\sigma=0$ (obtained via SVD of $A$); and the sparsity level ${\rm s_p}$, defined as the percentage of zero entries in $\bar X$. The penalty parameter $\gamma$ in \eqref{prob:spca:l1:reg}, \eqref{equ:spca:capped:l1}, and \eqref{equ:spca:laregest:knorm} is set as 
$\gamma = \widetilde \gamma \|A X^{\rm pca}\|_{\Ftt}^2/(nr)$, where $\widetilde \gamma > 0$ will be specified later.  

For the iRPDC algorithms, we set $\epsilon = 10^{-4}$, $\rho = 0.99$, $c = 10^{-4}$, $s = 0.5$, $\beta_1 = 0.99/(2+8c)$, $\omega_0 = 2 \times 10^{-5}  L_h^0$, $a = 1.5$, $L_{\min} = 10^{-10}L$, and $L_{\max} = 10^{10} L$.  For iRPDC-BB,  we use $\varrho_2 = 100$ and $\varrho_1 = 10^{-4}$.
The adaptive $\ell_j$ follows the Riemannian BB stepsize \cite{wen2013feasible} as
$\max\{L_{\min}, \min\{\iprod{p_j}{p_j}/|\iprod{p_j}{x_j - x_{j-1}}|, L_{\max}\}\}.$  We also enforce a lower bound on the subproblem tolerance by setting $\varepsilon_j \leftarrow \max\{\varepsilon_j,10^{-10}\}$. The iRPDC algorithms terminate when the prescribed stopping conditions are met, or when both $\|x_j - x_{j-1}\|_{\Ftt} \leq 10^{-4}\sqrt{r}$ and $|F(x_j) - F(x_{j-1})| \leq 10^{-6}\max\{1, |F(x_j)|\}$ hold, or  $100$ iterations are reached (warm starts are used across problem sequences; see the subsequent experiments).

\subsubsection{Modeling effectiveness}  \label{subsection:numerical:approximation}  We  evaluate the approximation quality of  the  capped-$\ell_1$-SPCA \eqref{equ:spca:capped:l1} and the $\ell_1$-$\ell_{[k]}$-SPCA \eqref{equ:spca:laregest:knorm}, in comparison with  the widely used $\ell_1$-SPCA baseline \eqref{prob:spca:l1:reg}. All problems are solved using iRPDC-BB.

\paragraph{Capped-$\ell_1$-SPCA}   
This model can be viewed as a refinement of $\ell_1$-SPCA \eqref{prob:spca:l1:reg}, since setting $\upsilon = 1$ reduces the capped-$\ell_1$ term to $\|X\|_1$ for $X \in \stief$.  For each fixed $\widetilde \gamma$, we solve a sequence of problems with $\upsilon \in \{1, 1.5, 1.5^2, \ldots, 1.5^{20}\}$, starting from $\upsilon = 1$ initialized at $X^{\rm PCA}$ and warm-starting subsequent problems.  For each dataset, two values of $\widetilde \gamma$ are considered: one such that capped-$\ell_1$-SPCA with large $\upsilon$ achieves sparsity around $0.8$, and the other such that $\ell_1$-SPCA yields a solution of comparable sparsity. Fig.~\ref{figure:cappedl1} shows that capped-$\ell_1$-SPCA better approximates the $\ell_0$-regularized model than $\ell_1$-SPCA. Specifically, the objective value of \eqref{prob:spca:l0:reg} decreases monotonically as $\upsilon$ increases, and eventually stabilizes along with sparsity and variance.  Moreover, to reach the same sparsity, $\ell_1$-SPCA requires a much larger $\widetilde \gamma$ but attains a lower variance.  
For example, on the random dataset,  capped-$\ell_1$-SPCA with $\widetilde \gamma = 1.2$ (with sufficiently large $\upsilon$) and $\ell_1$-SPCA with $\widetilde \gamma = 120$ (with $\upsilon = 1$) both yield sparsity about $0.8$, but the variances are  $0.7563$ versus $0.6167$, respectively.  
On  \texttt{cifar10}, the corresponding values are $0.9622$ versus $0.9416$. 

\paragraph{$\ell_1$-$\ell_{[k]}$-SPCA}
This model serves as a more direct relaxation of the $\ell_0$-constrained SPCA~\eqref{prob:spca:l0:constrained}.  We solve a series of problems with $\widetilde \gamma \in \{1, 1.5, 1.5^2, \ldots, 1.5^{20}\}$,  where the case $\widetilde \gamma = 1$ is initialized at $X^{\rm PCA}$  and each subsequent one is warm-started. The comparison results are plotted in Fig.~\ref{figure:largestknorm}, where the label $\ell_1$-$\ell_{[k]}$-${\rm s}_{\mathrm{p}}$ indicates that $k = (1 - {\rm s}_{\mathrm{p}})nr$ in~\eqref{equ:spca:laregest:knorm}. The figure shows that $\ell_1$-$\ell_{[k]}$-SPCA with sufficiently large $\widetilde \gamma$ consistently attains solutions whose sparsity levels match the $\ell_0$-norm constraint in~\eqref{prob:spca:l0:constrained}.
In contrast, $\ell_1$-SPCA with large $\widetilde \gamma$ often degenerates, typically returning $r$ columns of the $n \times n$ identity matrix.
For instance, on the random dataset, when $\widetilde \gamma \geq 438$, model \eqref{equ:spca:laregest:knorm} stabilizes at sparsity ${\rm s}_{\mathrm{p}} = 0.7$ with variance $0.8099$, whereas $\ell_1$-SPCA yields  ${\rm s}_{\mathrm{p}} = 0.9997$ but variance only $0.0063$.
Even with careful tuning (e.g., $\widetilde \gamma \approx 86$ for $\ell_1$-SPCA), the resulting solution ${\rm s}{\mathrm{p}} \approx 0.7$ with variance $0.7114$ remains inferior to that of $\ell_1$-$\ell_{[k]}$-SPCA.  Similar trends are observed across other sparsity levels and datasets. 

In summary, capped-$\ell_1$-SPCA \eqref{prob:spca:l1:reg}  offers a tighter approximation to the $\ell_0$-regularized model \eqref{prob:spca:l0:reg}, while $\ell_1$-$\ell_{[k]}$-SPCA \eqref{equ:spca:laregest:knorm} more effectively captures the constraints of the $\ell_0$-constrained model \eqref{prob:spca:l0:constrained}. Together, these results indicate that both DC formulations reflect the structure of their respective $\ell_0$ counterparts more faithfully than the standard $\ell_1$ relaxation.

\begin{figure}[t]
\centering
\subfloat[Objective value]{%
  \begin{minipage}{0.32\linewidth}
    \centering
    \includegraphics[width=\linewidth,height=0.5\linewidth]{./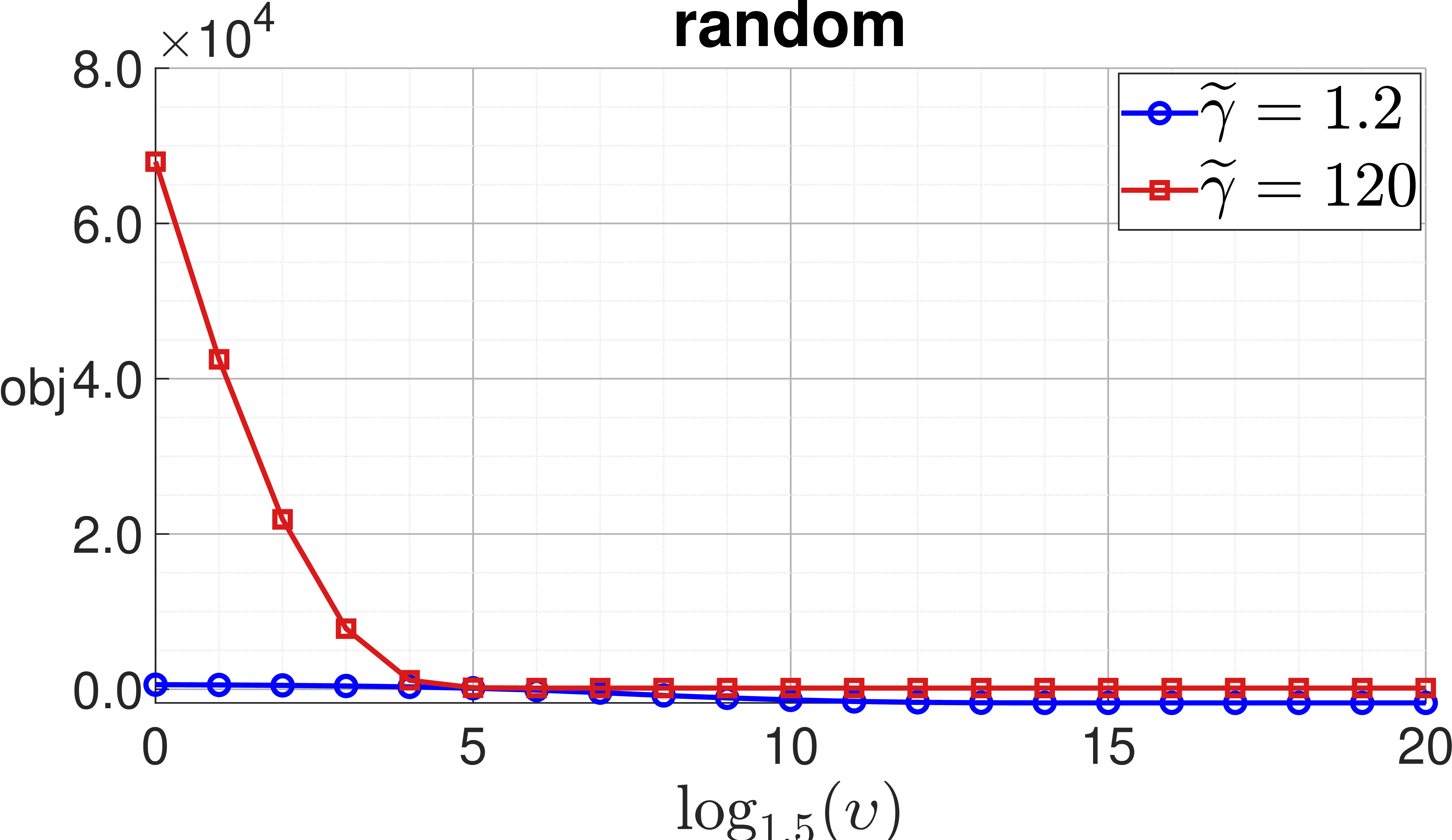}\\[1pt]
    \includegraphics[width=\linewidth,height=0.5\linewidth]{./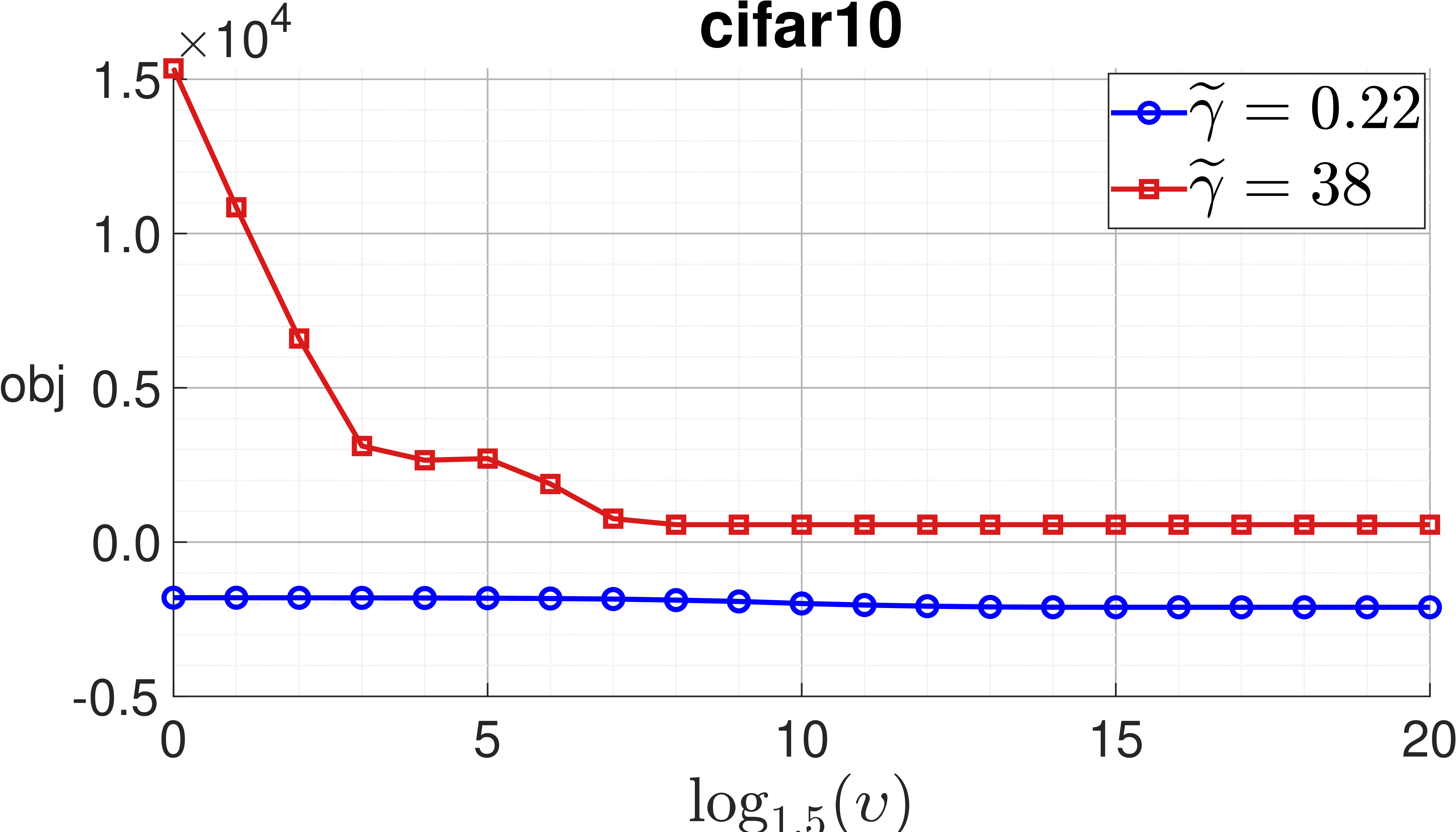}
  \end{minipage}
}
\hfill
\subfloat[Sparsity level ${\rm s}_p$]{%
  \begin{minipage}{0.32\linewidth}
    \centering
    \includegraphics[width=\linewidth,height=0.5\linewidth]{./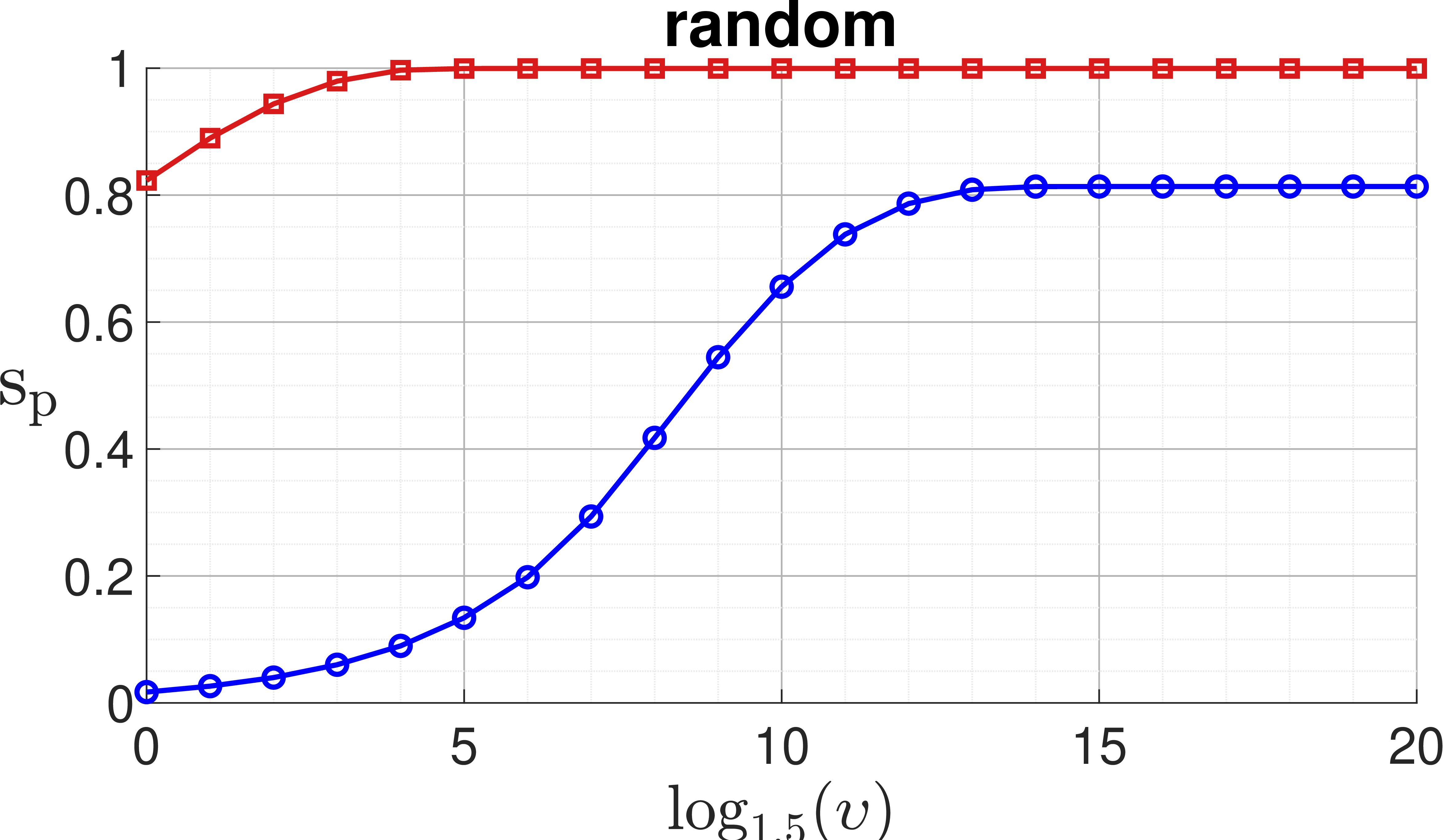}\\[1pt]
    \includegraphics[width=\linewidth,height=0.5\linewidth]{./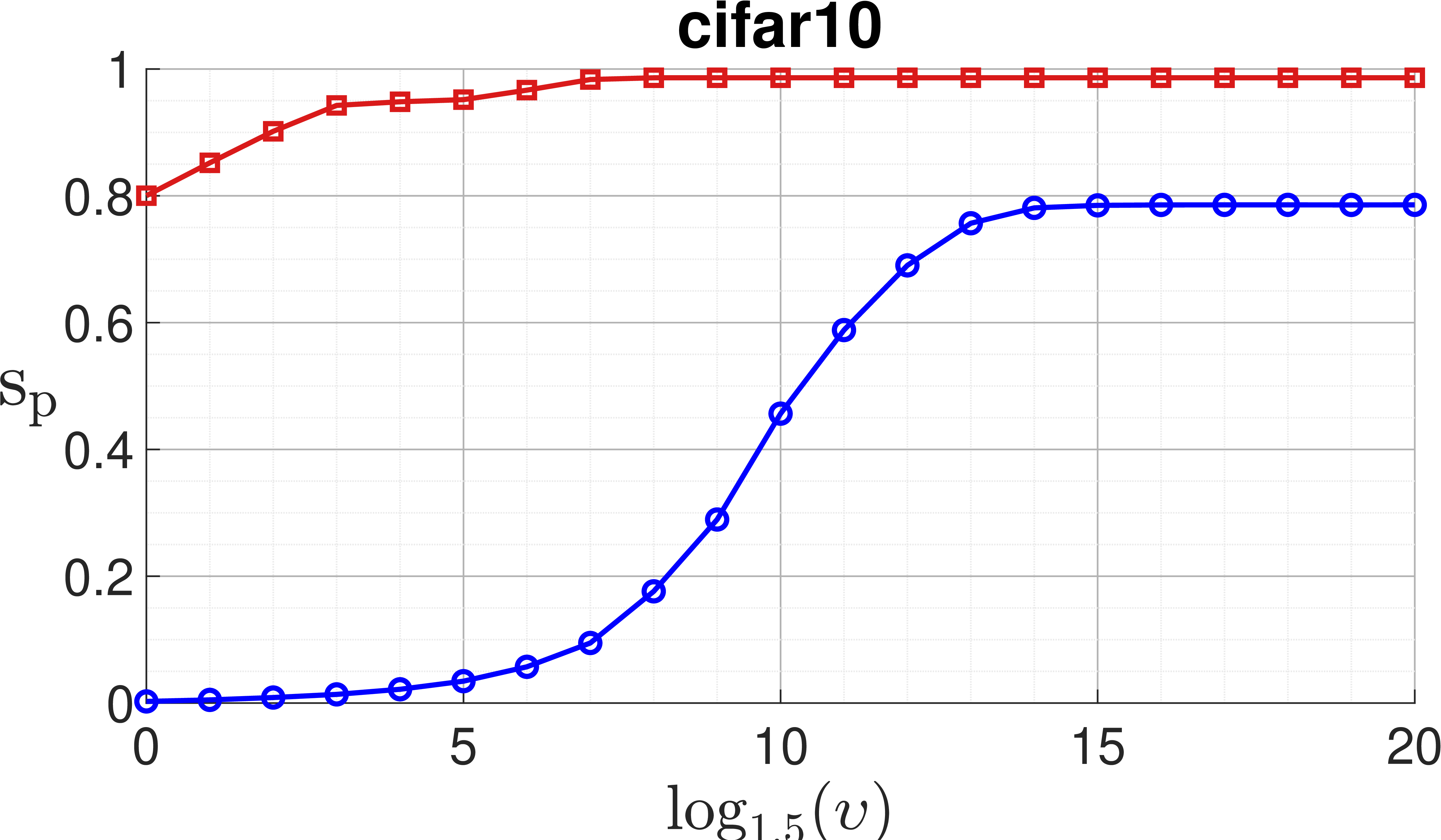}
  \end{minipage}
}
\hfill
\subfloat[Scaled variance ${\rm v_{sc}}$]{%
  \begin{minipage}{0.32\linewidth}
    \centering
    \includegraphics[width=\linewidth,height=0.5\linewidth]{./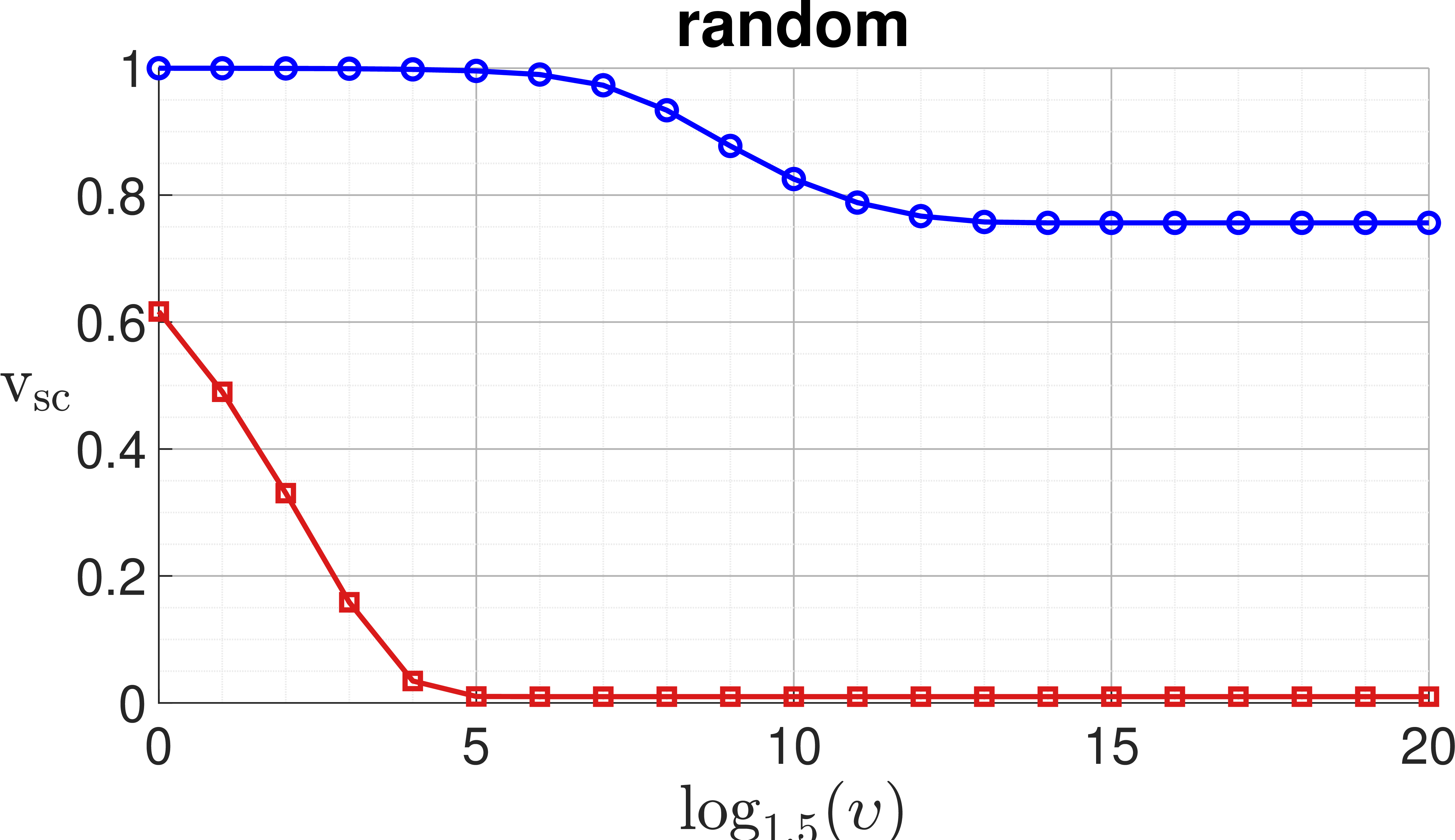}\\[1pt]
    \includegraphics[width=\linewidth,height=0.5\linewidth]{./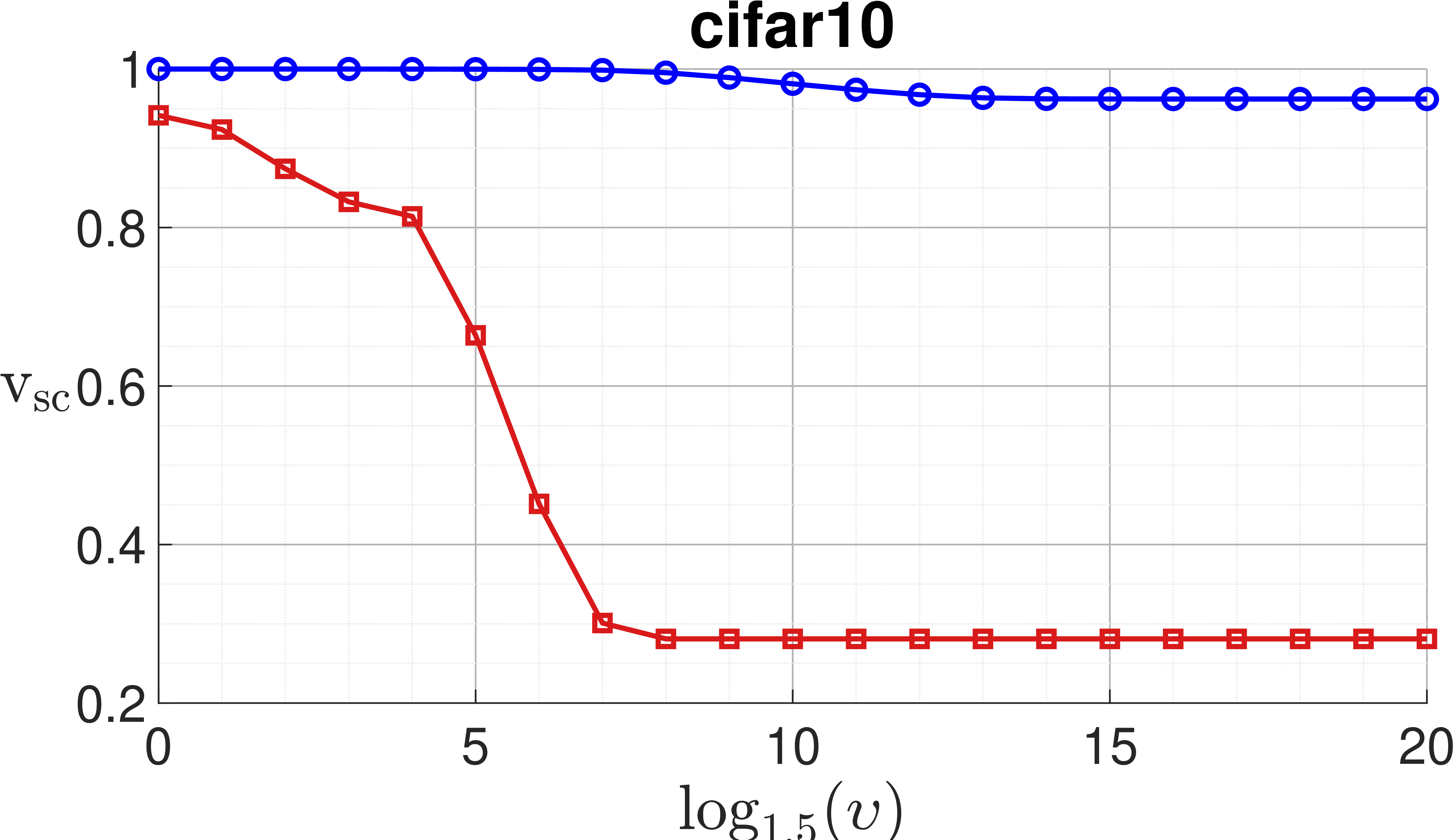}
  \end{minipage}
}
\caption{Results for capped-$\ell_1$-SPCA~\eqref{equ:spca:capped:l1} with $r=20$.}
\label{figure:cappedl1}
\end{figure}

\begin{figure}[t]
\centering
\subfloat[Sparsity level ${\rm s}_p$]{
  \begin{minipage}{0.32\linewidth}
    \centering
    \includegraphics[width=\linewidth,height=0.5\linewidth]{./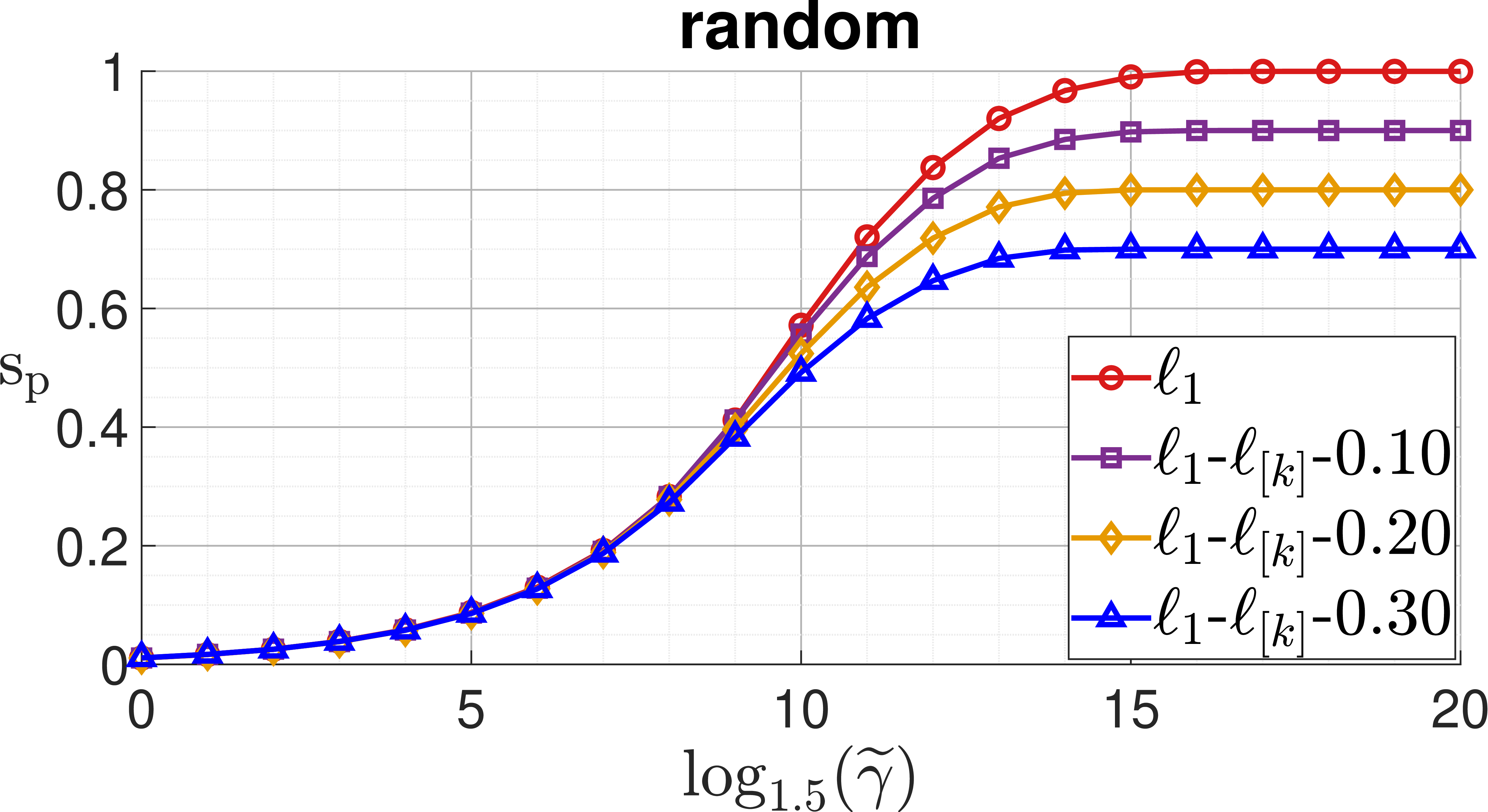}\\[1pt]
    \includegraphics[width=\linewidth,height=0.5\linewidth]{./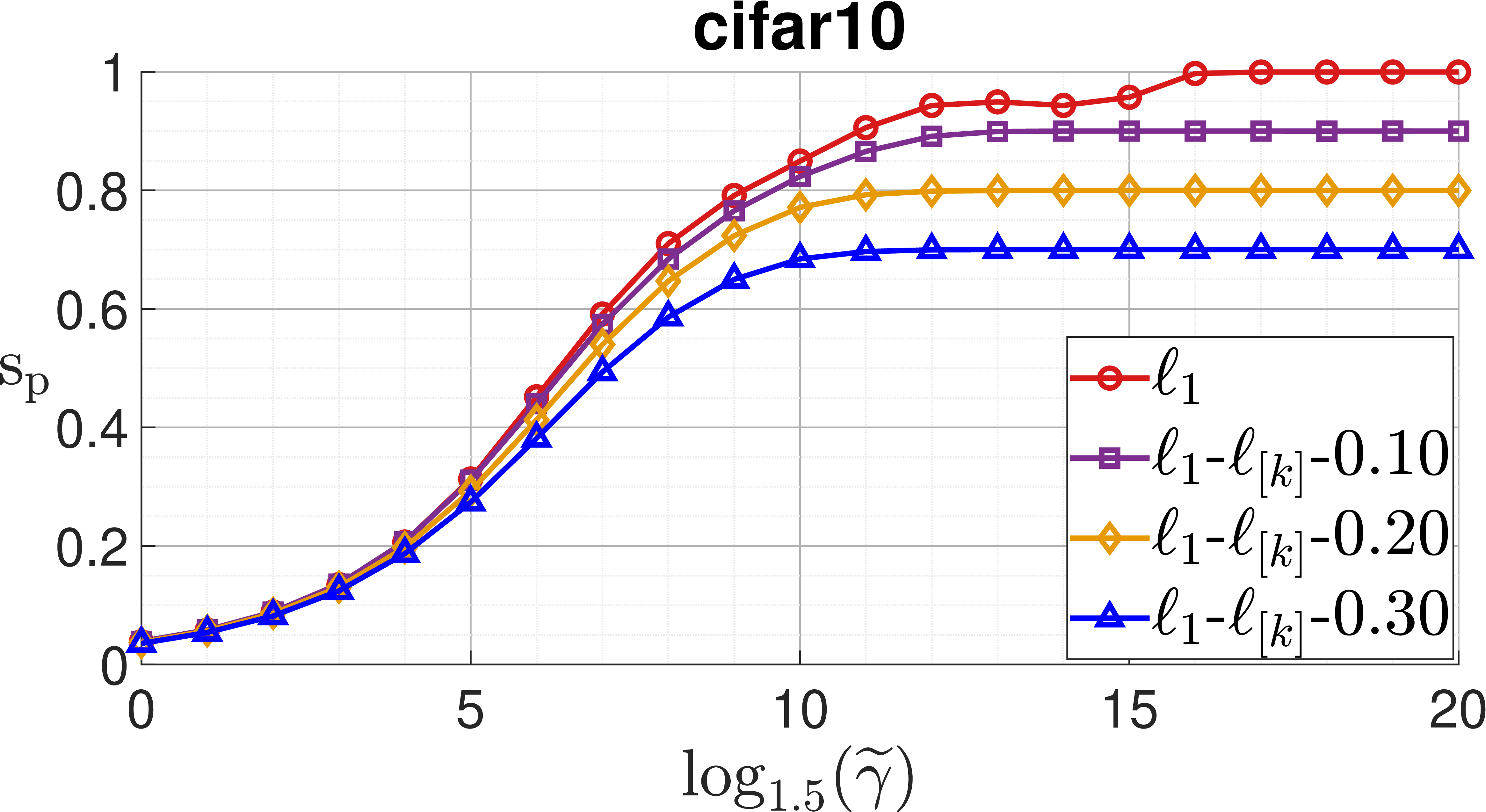}
  \end{minipage}
}
~~~
\subfloat[Scaled variance ${\rm v_{sc}}$]{ 
  \begin{minipage}{0.32\linewidth}
    \centering
    \includegraphics[width=\linewidth,height=0.5\linewidth]{./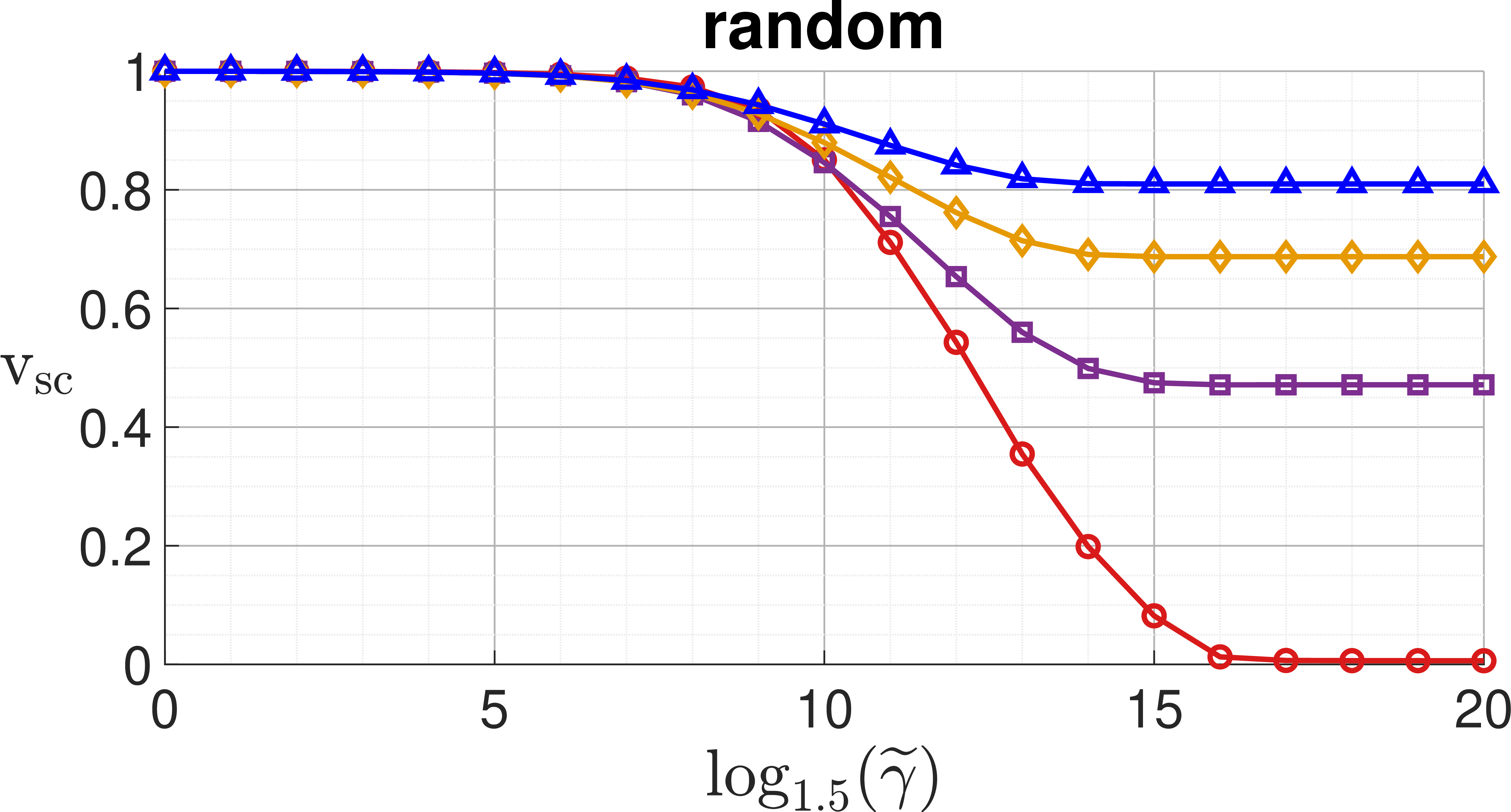}\\[1pt]
    \includegraphics[width=\linewidth,height=0.5\linewidth]{./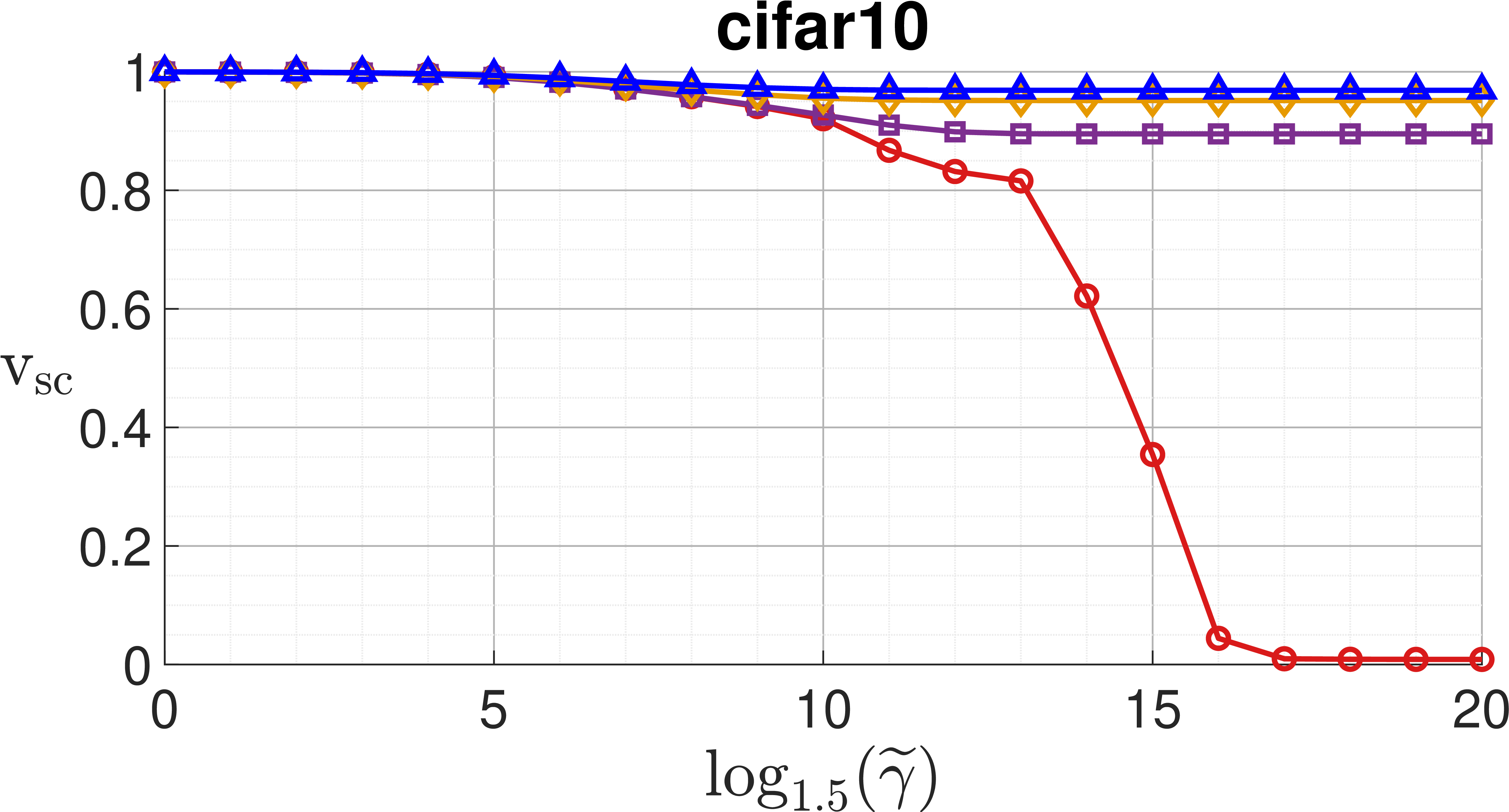}
  \end{minipage}
}
\caption{Results for $\ell_1$-$\ell_{[k]}$-SPCA~\eqref{equ:spca:laregest:knorm} with $r=20$.}
\label{figure:largestknorm}
\end{figure}
\subsubsection{Computational efficiency} \label{subsection:numerical:efficiency}
Among the three iRPDC algorithms introduced in Section \ref{subsection:iRPDCA},  we focus on iRPDC-NFG and iRPDC-BB, as they adaptively adjust the number of inner iterations instead of fixing them in advance. We also include a semismooth Newton-based algorithm, denoted by iRPDC-ASSN, where the dual subproblem is solved by a semismooth Newton method~\cite{xiao2018regularized,chen2020proximal}, implemented following \cite{huang2024riemannian} (code available at \url{https://www.math.fsu.edu/whuang2}). As an external baseline, we compare with OADMM \cite{yuan2024admm} (code available at \url{https://openreview.net/forum?id=K1G8UKcEBO}).

For capped-$\ell_1$-SPCA~\eqref{equ:spca:capped:l1}, we solve a sequence of problems with parameters $\upsilon$ starting from $1$ and increasing geometrically by a factor of $1.5$ (i.e., $1,1.5,1.5^2,\ldots$), terminating once the relative change in objective values falls below $10^{-4}$ and the change in sparsity below $10^{-3}$.  For $\ell_1$-$\ell_{[k]}$-SPCA~\eqref{equ:spca:laregest:knorm}, we vary $\widetilde\gamma$ in the same way until the solution achieves sparsity within $10^{-3}$ of the target level. For each fixed parameter, OADMM terminates when $\|x_j - x_{j-1}\|_{\Ftt} \leq 10^{-4}\sqrt{r}$ and $|F(x_j) - F(x_{j-1})| \leq 10^{-6}\max\{1,|F(x_j)|\}$, or after $500$ iterations. For OADMM, we use the recommended parameter settings from \cite{yuan2024admm}, except that the key penalty parameter $\beta_0$ is carefully tuned. In the initial problems ($\upsilon=1$ or $\widetilde \gamma=1$), $\beta_0$ is chosen from the candidate set $10^{\tilde \beta_0}$ with $\tilde \beta_0 \in \{0,0.2,\ldots,4\}$ to maximize OADMM’s objective performance. For subsequent problems, $\beta_0$ is initialized from the previous solution and scaled by $1.5$, instead of being re-tuned each time. In practice, for capped-$\ell_1$-SPCA, we set $\tilde \beta_0 = 1.4, 1.2, 1.0, 1.0$ for $r = 20, 40, 80, 100$ on the random dataset, and $\tilde \beta_0 = 2.6, 2.2, 1.6, 1.6$ on the \texttt{cifar10} dataset. For $\ell_1$-$\ell_{[k]}$-SPCA, we fix $\tilde \beta_0$ at $2.6$ for the random dataset and $3.2$ for the \texttt{cifar10} dataset. 
\begin{table}[!t]
\footnotesize
\caption{Comparison of OADMM and iRPDC algorithms on capped-$\ell_1$-SPCA~\eqref{equ:spca:capped:l1} 
and $\ell_1$-$\ell_{[k]}$-SPCA~\eqref{equ:spca:laregest:knorm} with $k = 0.2 nr$. 
Methods ``a'', ``b'', ``c'', ``d'' denote OADMM, iRPDC-ASSN, iRPDC-NFG, and iRPDC-BB, respectively. }
\label{table:combined}
\centering
\setlength{\tabcolsep}{2pt}
\resizebox{\textwidth}{!}{
\begin{tabular}{@{} c  cccc  cccc   cccc @{}}
\toprule
&  \multicolumn{4}{c}{${\rm iter}_{\rm out}$ ($\overline{\rm iter}_{\rm in}$)} 
& \multicolumn{4}{c}{time (${\rm time}_{\rm sub}$)} 
& \multicolumn{4}{c}{obj}  
\\
\cmidrule(lr){2-5}\cmidrule(lr){6-9}\cmidrule(lr){10-13}  
\multicolumn{1}{c}{\(r\)}   
& a & b & c  & d
& a & b & c  & d
& a & b & c  & d
\\
\midrule
\addlinespace[0.4ex]
\multicolumn{13}{c}{{capped-$\ell_1$-SPCA}:  random, $n=4000$, $\widetilde \gamma = 0.6$}  \\[2pt]
20 & 5315 & 983(1.2) & 974(2.3) & 976(1.1)  & 24 & 8(3) & 7(2) & 6(2) & -2238 & -2307 & -2307 & -2307 \\ 
40 & 5935 & 1103(1.2) & 1114(6.1) & 1097(1.9)  & 41 & 23(15) & 22(14) & 15(8) & -2650 & -2754 & -2754 & -2754 \\ 
80 & 7328 & 1291(1.2) & 1310(31.3) & 1294(5.7)  & 125 & 155(119) & 235(199) & 93(57) & -2998 & -3123 & -3122 & -3122 \\ 
100 & 7082 & 1379(1.3) & 1405(49.9) & 1370(8.1)  & 122 & 182(143) & 364(323) & 123(83) & -3099 & -3223 & -3223 & -3223 \\
\midrule[0.2pt]
\addlinespace[0.4ex]
\multicolumn{13}{c}{{capped-$\ell_1$-SPCA}: cifar10, $n=3072$, $\widetilde \gamma = 0.1$} \\[2pt]
20 & 3717 & 1035(0.6) & 1057(1.9) & 1038(0.8)  & 23 & 9(2) & 9(2) & 8(1) & -2118 & -2183 & -2183 & -2183 \\ 
40 & 4525 & 949(1.0) & 935(2.4) & 917(1.0)  & 39 & 24(14) & 18(8) & 16(6) & -2377 & -2430 & -2433 & -2433 \\ 
80 & 4914 & 831(0.7) & 838(2.5) & 830(1.0)  & 62 & 39(23) & 27(11) & 26(9) & -2592 & -2633 & -2633 & -2633 \\ 
100 & 4893 & 788(0.7) & 791(2.6) & 786(1.1)  & 74 & 44(26) & 31(13) & 28(11) & -2654 & -2689 & -2689 & -2689 \\ 
\midrule[0.2pt]
\addlinespace[0.4ex]
 \multicolumn{13}{c}{$\ell_1$-$\ell_{[k]}$-SPCA: random, $n=4000$}\\[2pt]
20 & 9261 & 1283(1.3) & 1286(5.8) & 1271(1.7)  & 67 & 14(5) & 15(5) & 12(3)& -2117 & -2373 & -2372 & -2373 \\ 
40 & 11011 & 1457(1.4) & 1459(9.1) & 1457(2.4)  & 122 & 47(28) & 51(32) & 35(15)& -2602 & -2987 & -2987 & -2987 \\ 
80 & 11802 & 1399(1.3) & 1400(11.1) & 1402(2.8)  & 217 & 111(79) & 113(81) & 62(30)& -3090 & -3473 & -3473 & -3473 \\ 
100 & 11316 & 1417(1.3) & 1419(11.8) & 1413(3.0)  & 234 & 126(87) & 137(98) & 73(34)& -3226 & -3591 & -3591 & -3591 \\ 
\midrule[0.2pt]
\addlinespace[0.4ex]
 \multicolumn{13}{c}{$\ell_1$-$\ell_{[k]}$-SPCA: cifar10, $n=3072$}\\[2pt] 
20 & 8148 & 1121(1.0) & 1119(10.6) & 1115(3.3)  & 41 & 9(4) & 12(6) & 9(3)& -1817 & -2192 & -2192 & -2192 \\ 
40 & 8588 & 1214(0.9) & 1221(18.4) & 1212(3.9)  & 71 & 29(17) & 48(36) & 25(13)& -2098 & -2455 & -2455 & -2455 \\ 
80 & 8660 & 1296(1.0) & 1295(16.6) & 1295(3.4)  & 115 & 64(43) & 102(81) & 44(23)& -2369 & -2627 & -2627 & -2627 \\ 
100 & 8402 & 1312(1.0) & 1312(16.9) & 1309(3.4)  & 138 & 74(47) & 131(104) & 52(26)& -2447 & -2676 & -2676 & -2677 \\
\bottomrule 
\end{tabular}
}
\end{table}

Table~\ref{table:combined} reports the comparison results. Here,  ``$\text{iter}_{\text{out}}$'' denotes the number of outer iterations, ``$\overline{\text{iter}}_{\text{in}}$'' represents the average inner iterations per outer iteration, ``time'' refers to the total runtime in seconds (measured by \texttt{tic}–\texttt{toc}), and ``${\rm time}_{\rm sub}$'' means the time spent solving subproblems. 
The column ``obj'' reports the objective value of \eqref{prob:spca:l0:reg} for capped-$\ell_1$-SPCA, and that of \eqref{prob:spca:l0:constrained} for $\ell_1$-$\ell_{[k]}$-SPCA. In the latter case, the objective coincides with the negative variance.

Several observations can be made. First, among the three iRPDC algorithms, iRPDC-ASSN requires the fewest inner iterations per outer iteration, followed by iRPDC-BB and then iRPDC-NFG. However, due to the high cost of each semismooth Newton step, iRPDC-ASSN is not the most efficient in runtime. Instead, iRPDC-BB achieves the best efficiency, with its advantage becoming more evident as the subspace dimension $r$ increases.  Second, compared with the baseline OADMM, the iRPDC algorithms consistently deliver much better solution quality, achieving lower objective values across all tested settings. In terms of efficiency, iRPDC-BB shows a clear advantage, converging with substantially fewer outer iterations and shorter runtimes. For instance, on the \texttt{cifar10} dataset, when solving $\ell_1$-$\ell_{[k]}$-SPCA with $r=100$, iRPDC-BB takes  52 seconds compared to 138 seconds for OADMM, while also yielding a significantly better variance (2677 vs.~2447). iRPDC-ASSN exhibits runtime performance comparable to OADMM, while iRPDC-NFG is slower; nevertheless, both still yield higher-quality solutions. Finally, it is worth noting that OADMM's performance is sensitive to the choice of $\beta_0$,
whereas the iRPDC algorithms maintain stable performance without requiring such delicate parameter tuning.

  %%%%%%%%%%%%%%%%%%%%%%%%%%%%%%%%%%%%%%%%%%%%%%%%%%%%%%%%%%%%
 \section{Conclusions}  \label{section:concluding}
In this paper, we studied a new class of nonsmooth Riemannian DC optimization problems. 
We established equivalence results between the Riemannian DC formulations and their sparse counterparts on specific manifolds, which motivates the development of efficient algorithms for such problems. 
We then proposed the iRPDC algorithmic framework with convergence guarantees. 
Within this framework, we developed practical iRPDC algorithms that inexactly solve the regularized dual subproblems using either the NFG method, the BB method, or the AR scheme. 
A key feature of our proposed framework is that the subproblem tolerance is determined adaptively from the information of previous iterates, which not only ensures flexibility in solving the subproblems but also enables a linesearch procedure that adaptively captures the local curvature. 
This mechanism, to the best of our knowledge, has not been explicitly considered in existing inexact Riemannian proximal algorithms.  We showed that the iRPDC algorithms attain an $\epsilon$-Riemannian critical point within $\mathcal{O}(\epsilon^{-2})$ outer iterations, with overall iteration complexities of  $\mathcal{O}(\epsilon^{-4})$, $\mathcal{O}(\epsilon^{-3}\log \epsilon^{-1})$, and $\mathcal{O}(\epsilon^{-3})$ for the three specific algorithms iRPDC-BB, iRPDC-NFG,  and iRPDC-AR, respectively.
Even in the special case when $g(\cdot)=0$, the iRPDC algorithm reduces to a new Riemannian proximal-type algorithm with such theoretical guarantees.
Numerical results on SPCA with DC terms validate both the effectiveness of the Riemannian DC models and the efficiency of the proposed algorithms. 
\bibliographystyle{siamplain}
\bibliography{dc-manifold}
\end{document}